\newcommand{\imod}[1]{\allowbreak\mkern4mu({\operator@font mod}\,\,#1)}
\newcommand{\G}{\Gamma}
\newcommand{\la}{\langle}
\newcommand{\ra}{\rangle}
\newcommand{\ds}{\displaystyle} \newcommand{\vs}{\vspace{3mm}}
\newcommand{\ol}{\overline}
\DeclareMathOperator*{\lcm}{lcm}
\DeclareMathOperator*{\Soc}{Soc}
\DeclareMathOperator*{\diag}{diag}
\DeclareMathOperator*{\End}{End}
\newtheorem{problem}{Problem}
\newtheorem{thm}{Theorem}[section]
\newtheorem{prop}[thm]{Proposition}
\newtheorem{lem}[thm]{Lemma}
\newtheorem{cor}[thm]{Corollary}
\newtheorem{prob}[thm]{Problem}
\theoremstyle{definition}
\newtheorem{defn}[thm]{Definition}
\newtheorem{remk}[thm]{Remark}
\newtheorem*{theoremA}{Theorem A}
\newtheorem*{theoremB}{Theorem B}
\newtheorem*{theoremC}{Theorem C}
\begin{document}

 \author{Elisa Covato}
 \address{School of Mathematics, University of Bristol, Bristol BS8 1TW, UK}
 \email{elisa.covato@bristol.ac.uk}

\title[On boundedly generated subgroups of profinite groups]{On boundedly generated subgroups \\ of profinite groups}

\begin{abstract}
In this paper we investigate the following general problem. Let $G$ be a group and let $i(G)$ be a property of $G$. Is there an integer $d$ such that $G$ contains a $d$-generated subgroup $H$ with $i(H)=i(G)$? Here we consider the case where $G$ is a profinite group and $H$ is a closed subgroup, extending earlier work of Lucchini and others on finite groups. For example, we prove that $d=3$ if $i(G)$ is the prime graph of $G$, which is best possible, and we show that $d=2$ if $i(G)$ is the exponent of a finitely generated prosupersolvable group $G$.
\end{abstract}

\date{\today}

\keywords{Profinite groups, generators}

\maketitle

\section{Introduction}\label{s:intro}

Let $G$ be a finite group and let $i(G)$ be a group invariant. For example, $i(G)$ could be the set of composition factors of $G$, or the exponent of $G$, or the set of prime divisors of $|G|$, and so on. Given such a property, one can ask whether or not it can be detected from subgroups of $G$ generated by very few elements. For instance, a well-known theorem of Thompson \cite{Thomp} states that $G$ is solvable if and only if every $2$-generated subgroup is solvable.

Let $d \in \mathbb{N}$ be minimal such that $G$ contains a $d$-generated subgroup $H$ with $i(H)=i(G)$. This integer is studied by Lucchini, Morigi and Shumyatsky in \cite{Bound}, where several interesting results are established. For example, they prove that $d=2$ if $i(G) = \pi(G)$ is the set of prime divisors of $|G|$ (cf. Problem 17.125 in \cite{kou}), and $d=3$ if $i(G) = \Gamma(G)$ is the \emph{prime graph} of $G$, which is a graph with vertex set $\pi(G)$, and two vertices $p$ and $q$ are adjacent if and only if $G$ contains an element of order $pq$. They also show that  $d \le 4$ if $i(G) = {\rm exp}(G)$ is the exponent of $G$, and the better bound $d \le 3$ has recently been established by Detomi and Lucchini \cite[Theorem 1.6]{Prob} (determining whether or not $d=2$ in this situation is an open problem). 

The main aim of this paper is to extend the study of boundedly generated subgroups initiated in \cite{Bound} from finite groups to \emph{profinite} groups, with some suitable (and necessary) modifications. 

Recall that a profinite group is a topological group which is an inverse limit of finite groups (which are equipped with the discrete topology). Given a closed subgroup $H$ of a profinite group $G$, its index $|G : H|$ is the least common multiple of the indices of the open subgroups of $G$ containing $H$. Hence, the order of a profinite group $G$ is defined to be $|G : 1|$, which is a \emph{supernatural number} (or \emph{Steinitz number}), that is, a formal infinite product $\prod p^{n(p)}$ over all primes $p$, in which each $n(p)$ is a non-negative integer or infinity. In addition, the order of an element $g \in G$, denoted $|g|$, is defined to be the order of the subgroup topologically generated by $g$, that is, $|g| = |\overline{\la g \ra}|$. If $d$ is a positive integer then a closed subgroup $H$ of $G$ is said to be $d$-generated if $H = \ol{\la x_1, \ldots, x_d \ra}$ for some $x_i \in H$, which is equivalent to the condition that $HN/N = \la x_1 N,\ldots, x_d N \ra$ for every open normal subgroup $N$ of $G$.

Given these definitions, we can consider the exponent of a profinite group $G$ and the set of prime divisors of $|G|$, denoted by $\exp(G)$ and $\pi(G)$, respectively. We can also define the prime graph $\Gamma(G)$. 

Our first result is a natural extension of \cite[Theorem C]{Bound}.

\begin{theoremA}
\emph{Let $G$ be a profinite group. Then there exists a $3$-generated (closed) subgroup $H$ of $G$ such that $\G(H)=\G(G)$.}
\end{theoremA}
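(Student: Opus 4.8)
The plan is to reduce the statement to its finite counterpart, \cite[Theorem C]{Bound}, by means of an inverse-limit construction. The first preliminary observation is that $\G(G)$ has only countably many vertices (they lie among the primes) and hence countably many edges, and that it is detected in finite quotients: if $g\in G$ has $|g|$ divisible by $pq$ then $\ol{\la g\ra}$ is procyclic and admits $C_{pq}$ as a continuous quotient, so some $G/N$ with $N$ open normal contains an element of order divisible by $pq$; conversely a surjection of profinite groups cannot decrease the order of an element. Thus $\G(G/N)\subseteq\G(G)$ for every open normal $N$, and $\G(G)=\bigcup_N\G(G/N)$.

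Using this together with the countability of $\G(G)$, I would first reduce to the case that $G$ is second countable: pick a sequence $M_1,M_2,\ldots$ of open normal subgroups with $\bigcup_i\G(G/M_i)=\G(G)$, set $L=\bigcap_i M_i$, and observe that $G/L$ is metrizable with $\G(G/L)=\G(G)$; a $3$-generated closed subgroup of $G/L$ realizing $\G(G/L)$ then pulls back, by lifting its three generators, to a $3$-generated closed subgroup of $G$ realizing $\G(G)$. So assume henceforth that $G=\varprojlim_k G/N_k$ for a descending chain $G=N_0\supseteq N_1\supseteq\cdots$ of open normal subgroups with trivial intersection, cofinal among the open normal subgroups of $G$; then $\G(G/N_k)\subseteq\G(G/N_{k+1})$ and $\G(G)=\bigcup_k\G(G/N_k)$.

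The heart of the argument is to construct, by induction on $k$, a compatible family of finite subgroups $H_k\leqs G/N_k$ such that (i) $H_k$ is $3$-generated, (ii) $\G(H_k)=\G(G/N_k)$, and (iii) the canonical map $G/N_{k+1}\to G/N_k$ carries $H_{k+1}$ onto $H_k$; the case $k=1$ is exactly \cite[Theorem C]{Bound}. Granting this, put $H=\varprojlim_k H_k$. By (iii) the bonding maps are surjective, so $H$ is a closed subgroup of $G$ whose image in each $G/N_k$ is $H_k$; by the standard fact that an inverse limit of finite $d$-generated groups along surjective bonding maps is topologically $d$-generated (apply compactness to the inverse system of the nonempty finite sets of generating triples, which is respected by surjections), $H$ is $3$-generated. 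Finally, since $H=\varprojlim_k H_k$ with $\bigcap_k(H\cap N_k)=1$, one has $\G(H)=\bigcup_k\G(H_k)=\bigcup_k\G(G/N_k)=\G(G)$, as wanted.

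The main obstacle is the inductive step, which is a \emph{relative} form of \cite[Theorem C]{Bound}: given a surjection $Q\to\ol Q$ of finite groups and a $3$-generated $\ol H\leqs\ol Q$ with $\G(\ol H)=\G(\ol Q)$, produce a $3$-generated $H\leqs Q$ mapping onto $\ol H$ with $\G(H)=\G(Q)$. Lifting the three generators of $\ol H$ to $Q$ already yields a $3$-generated $\wtilde H\leqs Q$ mapping onto $\ol H$ with $\G(\wtilde H)\supseteq\G(\ol H)=\G(\ol Q)$; the difficulty is to modify these lifts inside the kernel of $Q\to\ol Q$ so as to also realize the finitely many new edges of $\G(Q)\setminus\G(\ol Q)$ without spending a fourth generator. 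I expect this to be settled by revisiting the proof of \cite[Theorem C]{Bound}, which builds its $3$-generated subgroup through a sequence of controlled enlargements, and checking that each enlargement can be carried out compatibly with a prescribed quotient; here it is useful to recall that an edge $\{p,q\}$ is realized by a procyclic subgroup, which realizes precisely the clique on its set of prime divisors, so that only finitely many new cliques must be accommodated at each step.
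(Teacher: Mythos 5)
Your outer frame is essentially the paper's: reduce to a countably based group via a countable family of open normal subgroups detecting $\G(G)$, realize the graph level by level in finite quotients, and recover a closed $3$-generated subgroup by compactness. That part is fine. But the entire mathematical content of the theorem is concentrated in the step you defer with ``I expect this to be settled by revisiting the proof of [Bound, Theorem C]'', and that step is not a routine revision. What is actually needed is a genuine strengthening of the finite theorem, and proving it occupies most of Section 3 of the paper: one must show (Proposition \ref{p.g.gen}) that for a finite group $M$ with a normal series $1=M_n\le\cdots\le M_0=M$ there is a \emph{single} $3$-generated subgroup $K$ with $\G(KM_i/M_i)=\G(M/M_i)$ for \emph{all} $i$ simultaneously. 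The proof is not an inspection of the argument in \cite{Bound}; it reduces to showing $d(M)\le 3$ when no proper subgroup works, passes to a quotient isomorphic to a crown-based power $L_k$ via Theorem \ref{G&L}, bounds $k$ by a delicate case analysis on the socle (including the generalized quaternion dichotomy of Lemma \ref{quaternion}), and then invokes the formulas for $d(L_k)$ together with the Aschbacher--Guralnick bound $s<r$. None of this is supplied or even sketched in your proposal, so as written there is a gap exactly where the theorem is hard.

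There is also a structural point worth noting: the \emph{relative} statement you formulate (lift a prescribed $3$-generated $\ol H\le \ol Q$ with $\G(\ol H)=\G(\ol Q)$ through a surjection $Q\to\ol Q$ to a $3$-generated $H$ with $\G(H)=\G(Q)$) is stronger than what is needed and is not obviously true: a badly chosen $\ol H$ might admit no $3$-generated lift realizing $\G(Q)$, and nothing in your induction guarantees the $H_k$ you build avoid this. The paper sidesteps prescribed images entirely. It defines, for each level $i$, the closed set $\Omega_i\subseteq G^3$ of triples whose image mod $M_i$ realizes $\G(G/M_i)$; the simultaneous finite statement above shows every finite subfamily of the $\Omega_i$ has nonempty intersection, and compactness of $G^3$ then produces one triple working at all levels at once, with no compatibility of subgroups across levels ever required. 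If you want to complete your argument, replace the relative lifting step by the simultaneous statement over a normal series and prove that; this is Proposition \ref{p.g.gen}, and its proof is where the crown-based power machinery enters.
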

	
This result  is best possible. Indeed, there exists a finite group $G$ such that $\G(H) \neq \G(G)$ for every $2$-generated subgroup $H$ of $G$ (see \cite[p.$\,883$]{Bound}). A key tool in the proof of Theorem A is an extension of \cite[Theorem C]{Bound} for finite groups (see Proposition \ref{p.g.gen}): if $1=M_n\leq \cdots \leq M_0=M$ is a normal series of a finite group $M$, then there is a $3$-generated subgroup $K$ of $M$ such that $\Gamma(KM_i/M_i) = \Gamma(M/M_i)$ for all $i$.

In order to prove Proposition \ref{p.g.gen}, we will show that if $M$ has no proper subgroup $K$ with the desired property, then $M$ is $3$-generated (so the conclusion holds with $K=M$). In particular, we are naturally led to consider the minimum number of generators of a finite group. A fundamental role in this investigation is played by the so-called \textit{crown-based powers} of a monolithic group (a finite group is said to be \emph{monolithic} if it has only one minimal normal subgroup). The notion of a \textit{crown} was introduced by  Gasch\"utz \cite{prefratt} in the context of solvable groups, in his construction of \textit{prefrattini subgroups}. More recently, this notion has been generalized to all finite groups (see \cite{Forster}, for example). In \cite{crown}, Detomi and Lucchini introduce crown-based powers as an extension of crowns, where they are used to study the probabilistic zeta functions of finite groups.
For each positive integer $k$, the crown-based power $L_k$ of a monolithic group $L$ is defined as a subgroup of $L^k$ (the $k$-fold direct product of $L$) whose socle is a crown (see Definition \ref{d:cbp}). In \cite{crown}, the authors also give conditions on the minimal generation of homomorphic images of a finite group that imply it is isomorphic to a crown-based power of some monolithic group (see Theorem \ref{G&L}). This result, together with the generating properties of crown-based powers recorded in Section \ref{s:prel}, will be used in the proof of Proposition \ref{p.g.gen}, which in turn plays a key role in the proof of Theorem A. Properties of crown-based powers will be discussed in more detail in Section \ref{s:prel}.

The next result provides a profinite analogue of a recent theorem of Detomi and Lucchini \cite[Theorem 1.5]{Prob} on the prime divisors of indices of subgroups of finite groups. It implies that 
if $C$ is a closed subgroup of a profinite group $G$ then it is possible to get information on the set of prime divisors of $|G:C|$ from the primes that divide $|H : C\cap H|$, where $H$ is an appropriate subgroup of $G$ with very few generators.

\begin{theoremB}
\emph{Let $G$ be a profinite group and $X,C$ be two (closed) subgroups of $G$ such that $X\leq C$. Then there exist $a,b\in G$ such that 
\[\pi(|G : C|) \subseteq \pi(|\langle a,b,X \rangle : C \cap \langle a,b,X \rangle|),\]
where $\pi(n)$ denotes the set of prime divisors of the integer $n$.}
\end{theoremB}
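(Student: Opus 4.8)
\emph{Strategy.} The plan is to reduce Theorem B to a finite statement and then pass to the limit by a compactness argument in $G\times G$. The finite input I would use is the following refinement of \cite[Theorem 1.5]{Prob} to a normal series, which bears the same relation to it that Proposition \ref{p.g.gen} bears to \cite[Theorem C]{Bound}: \emph{if $M$ is a finite group with a normal series $1=M_n\leqs\cdots\leqs M_0=M$ and $X\leqs C\leqs M$, then there exist $a,b\in M$ with $\pi(|M/M_i:CM_i/M_i|)\subseteq\pi(|\la a,b,X\ra M_i/M_i:(CM_i/M_i)\cap\la a,b,X\ra M_i/M_i|)$ for every $i$.} The case $n=1$ is exactly \cite[Theorem 1.5]{Prob}. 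I expect the proof of this refinement to be the main obstacle: following the argument behind Proposition \ref{p.g.gen}, one would show that if $M$ has no proper subgroup $K\geqs X$ with the stated property for the given series, then $M=\la a,b,X\ra$ for suitable $a,b$, thus reducing matters to bounding the number of generators of the relevant homomorphic images of $M$ via the crown-based power machinery of Section \ref{s:prel}.

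\emph{Reduction to the finite case.} Since $C$ is closed, $|G:C|=\lcm_N|G:CN|$ over the open normal subgroups $N$ of $G$ (the subgroups $CN$ being cofinal among open subgroups containing $C$), so $\pi(|G:C|)=\bigcup_N\pi(|G:CN|)$. Enumerate $\pi(|G:C|)=\{p_1,p_2,\dots\}$; for each $t\geqs1$ choose an open normal subgroup $N_t$ of $G$ with $p_t$ dividing $|G:CN_t|$, put $M_0=G$ and $M_t=N_1\cap\cdots\cap N_t$, so that $G=M_0\supseteq M_1\supseteq M_2\supseteq\cdots$ are open normal and, since $M_t\leqs N_s$ forces $|G:CN_s|$ to divide $|G:CM_t|$, one has $\{p_1,\dots,p_t\}\subseteq\pi(|G:CM_t|)$ for all $t$. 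For $t\geqs1$ put
\[
T_t=\bigl\{(a,b)\in G\times G:\ \pi(|G/M_t:CM_t/M_t|)\subseteq\pi(|KM_t/M_t:(CM_t/M_t)\cap(KM_t/M_t)|),\ K=\ol{\la a,b,X\ra}\bigr\}.
\]
Since $KM_t/M_t=\la aM_t,bM_t,XM_t/M_t\ra$ and the displayed condition depends only on $aM_t$ and $bM_t$, each $T_t$ is clopen in the compact space $G\times G$. Furthermore $T_1\cap\cdots\cap T_t\neq\emptyset$ for every $t$: apply the finite refinement to the finite group $G/M_t$ with the normal series $1=M_t/M_t\leqs M_{t-1}/M_t\leqs\cdots\leqs M_0/M_t=G/M_t$ and the subgroups $XM_t/M_t\leqs CM_t/M_t$; for $j=1,\dots,t$ the term $M_j/M_t$ of this series yields precisely the condition defining $T_j$ (using $M_t\leqs M_j$ to identify $(CM_t/M_t)(M_j/M_t)/(M_j/M_t)$ with $CM_j/M_j$), so any lift to $G$ of the resulting pair lies in $T_1\cap\cdots\cap T_t$. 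By compactness of $G\times G$ it follows that $\bigcap_{t\geqs1}T_t\neq\emptyset$.

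\emph{Conclusion.} Fix $(a,b)\in\bigcap_t T_t$ and set $K=\ol{\la a,b,X\ra}$. For each $t$ one has $(C\cap K)M_t/M_t\subseteq(CM_t/M_t)\cap(KM_t/M_t)$, so $|KM_t/M_t:(CM_t/M_t)\cap(KM_t/M_t)|$ divides $|KM_t/M_t:(C\cap K)M_t/M_t|$, which in turn divides $|K:C\cap K|$ (as $C\cap K$ surjects onto $(C\cap K)M_t/M_t$ under $K\to KM_t/M_t$). Hence $\pi(|G:CM_t|)=\pi(|G/M_t:CM_t/M_t|)\subseteq\pi(|K:C\cap K|)$ for all $t$, and therefore $\pi(|G:C|)=\bigcup_t\pi(|G:CM_t|)\subseteq\pi(|K:C\cap K|)$ (since each $p_t$ lies in $\pi(|G:CM_t|)$), which is the assertion. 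A point worth flagging is that one cannot run this compactness argument over the sets $T_N$ for all open normal $N$ at once: the property that $(aN,bN)$ witnesses Theorem B for $G/N$ need not be inherited by proper quotients of $G/N$, so that larger family need not have the finite intersection property. Passing first to a single decreasing sequence $(M_t)$ chosen to exhaust $\pi(|G:C|)$ forces every finite intersection that arises to come from one normal series, which is exactly the configuration the finite refinement controls.
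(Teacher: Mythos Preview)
Your proposal is correct and follows essentially the same route as the paper: the finite refinement you isolate is exactly Proposition~\ref{pdfinitoGEN} (proved there via Lemma~\ref{lemmabasedivisors} and Proposition~\ref{in mezzo}, precisely the crown-based-power argument you anticipate), and your compactness argument over a descending chain of open normal subgroups mirrors the paper's proof of Theorem~B. Your explicit construction of $M_t=N_1\cap\cdots\cap N_t$ is a minor cosmetic variant of the paper's reduction to a countably based quotient, and your closing remark on why one must work along a single chain is a helpful clarification that the paper leaves implicit.
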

	
Again, this result is best possible; in general, the conclusion does not hold if we only take a single element $a \in G$ (for example, let $G=S_3$ and $X=C=1$). A key result in the proof of Theorem B is Proposition \ref{pdfinitoGEN}, which states that if $1=M_n\leq \cdots \leq M_0=M$ is a normal series of a finite group $M$, and $X\leq C$ are two subgroups of $M$, then there exist $a,b\in M$ such that 
\[\pi(|M : CM_i|) \subseteq \pi(|\langle a,b,X \rangle M_i : CM_i \cap \langle a,b,X \rangle M_i|)\]
for all $i$. In order to prove the existence of such elements, we will bound the minimal number of generators of $M$ with respect to $X$, i.e. the minimum number of elements, which together with $X$, are needed to generate $M$. Crown-based powers will also play an important role in this analysis (see Lemma \ref{maind(L)Y} and Proposition \ref{G&L&X}). As a corollary to Theorem B, we also extend a theorem of Camina, Shumyatsky and Sica \cite[Theorem 1.2]{Cam_sica} from finite groups to profinite groups (see Theorem \ref{t:css}).

Motivation for our final theorem stems from the following general problem: 

\begin{problem}\label{p:1}
Is there a constant $d$ such that every profinite group $G$ contains a $d$-generated (closed) subgroup $H$ such that $\exp(H)=\exp(G)$?
\end{problem}

If $G$ is finite then this problem has a positive solution, with $d \le 3$ (see \cite[Theorem 1.7]{Prob}), and $d=2$ if $G$ is solvable (see \cite[Theorem E]{Bound}). The general problem for profinite groups is more difficult, and it is related to the 
following well-known open problem proposed by Hewitt and Ross in \cite{H&R} (also see \cite[Section 4.8.5]{Ribes}):

\begin{problem}\label{p:2}
Does every torsion profinite group have finite exponent?
\end{problem} 

In \cite{Zel}, Zel'manov proves that every finitely generated torsion pro-$p$ group is finite, but the general problem is still open. In view of Zel'manov's theorem, it is not difficult to show that a positive solution to Problem \ref{p:1} will yield a positive solution to Problem \ref{p:2} (see Section \ref{s:c} for the details). Here, we use Zel'manov's theorem to establish a best possible result for finitely generated prosupersolvable groups.

\begin{theoremC}
\emph{Let $G$ be a finitely generated prosupersolvable group. Then there exists a $2$-generated (closed) subgroup $H$ of $G$ such that $\exp(H)=\exp(G)$.}
\end{theoremC}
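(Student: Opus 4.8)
The plan is to reduce to a statement about finite groups, globalize it by a compactness/inverse-limit argument, and use Zel'manov's theorem to handle the primes at which $\exp(G)$ is infinite. First, since $G$ is finitely generated it has only countably many open subgroups, so I would fix a descending chain $G=N_0\geqs N_1\geqs\cdots$ of open normal subgroups with $\bigcap_k N_k=1$; then $G=\varprojlim_k G/N_k$ and $\exp(G)=\lcm_k\exp(G/N_k)$. The goal becomes to find $a,b\in G$ with $\exp(\langle aN_k,bN_k\rangle)=\exp(G/N_k)$ for every $k$, because then $H:=\ol{\langle a,b\rangle}=\varprojlim_k\langle aN_k,bN_k\rangle$ satisfies $\exp(H)=\lcm_k\exp(\langle aN_k,bN_k\rangle)=\exp(G)$, and $H$ is a $2$-generated closed subgroup as required. (Here one uses that $\exp(G/N_k)\mid\exp(G)$ for all $k$, via procyclic subgroups.)

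For the finite input I would establish, by analogy with Proposition \ref{p.g.gen}, a normal-series strengthening of \cite[Theorem E]{Bound}: if $M$ is a finite supersolvable group with a normal series $1=M_n\leqs\cdots\leqs M_0=M$, then there is a $2$-generated subgroup $K\leqs M$ with $\exp(KM_i/M_i)=\exp(M/M_i)$ for all $i$. As in Proposition \ref{p.g.gen}, the argument would show that a minimal counterexample $M$ (with respect to the series) has $d(M)\leqs 2$, so that $K=M$ works: Theorem \ref{G&L} reduces the situation to $M$ being a crown-based power of a monolithic group, and the supersolvability of $M$ (whose chief factors are cyclic of prime order) then yields the required bound. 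Granting this, I apply it to $M=G/N_k$ with the series $\{N_i/N_k:i\leqs k\}$ to obtain a $2$-generated $K_k/N_k=\langle a_kN_k,b_kN_k\rangle$ with $\exp(K_kN_i/N_i)=\exp(G/N_i)$ for $0\leqs i\leqs k$, and lift to $a_k,b_k\in G$. Since $G$ is countably based, $G\times G$ is compact and metrizable, so some subsequence has $(a_{k_j},b_{k_j})\to(a,b)$; fixing $i$ and taking $j$ large enough that $k_j\geqs i$ and $a_{k_j}\equiv a$, $b_{k_j}\equiv b\imod{N_i}$ gives $\langle aN_i,bN_i\rangle=\langle a_{k_j}N_i,b_{k_j}N_i\rangle=K_{k_j}N_i/N_i$, hence $\exp(\langle aN_i,bN_i\rangle)=\exp(G/N_i)$ for all $i$, as needed.

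The hard part will be the primes $p$ with $v_p(\exp(G))=\infty$: in $\exp(G)=\lcm_k\exp(G/N_k)$ these appear only as an unbounded sequence of finite $p$-powers, and both to make the finite normal-series statement above available for supersolvable groups and to be sure that $\exp(H)=\exp(G)$ is attained by a genuinely $2$-generated closed subgroup (rather than merely an increasing union of finite subgroups), I would want an honest copy of $\z_p$ inside $G$ for each such $p$. This is where the prosupersolvable hypothesis does real work: $\ol{G'}$ is pronilpotent and $G/\ol{G'}$ is a finitely generated abelian profinite group, so a Sylow pro-$p$ subgroup of $G$ is an extension of the (unique, normal) Sylow pro-$p$ subgroup of $\ol{G'}$ by the Sylow pro-$p$ subgroup of $G/\ol{G'}$; analysing these two pieces and invoking Zel'manov's theorem — a finitely generated torsion pro-$p$ group is finite, so a finitely generated pro-$p$ group of infinite exponent is non-torsion and hence contains an element of infinite order — one locates the required $\z_p$. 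It then remains to interleave these procyclic contributions at the primes dividing $\exp(G)$ to an infinite power with the finite-exponent data at the remaining primes (supplied by the construction of the previous paragraph) inside a single $2$-generated $H$. That the cost is $d=2$, rather than the $d=3$ needed for arbitrary finite (hence prosolvable) groups, is precisely the dividend of supersolvability, and $d=2$ is best possible already for $G=S_3$, where no procyclic subgroup has exponent $6=\exp(S_3)$.
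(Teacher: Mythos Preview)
Your approach differs substantially from the paper's, and the key step is not established. You propose a normal-series strengthening of \cite[Theorem~E]{Bound} for finite supersolvable groups, then a compactness globalization. But the sketch ``Theorem~\ref{G&L} reduces to a crown-based power, and supersolvability yields the bound'' is not a proof: after reducing to $M/N\cong L_k$ with $A=\Soc(L)\cong C_p$, you must exhibit a proper $X$ with $\exp(XM_i/M_i)=\exp(M/M_i)$ for \emph{every} $i$. Taking $X/N=\diag(L)$ does give $|M:X|=p^{k-1}$, so the $q$-part of the exponent matches for $q\ne p$, but controlling the $p$-part of $\exp(XM_i/M_i)$ at each level of an \emph{arbitrary} normal series is not automatic, and you do not address it. Your third paragraph then tries to repair matters via Zel'manov and an unspecified ``interleaving'' of procyclic $\z_p$'s with finite-exponent data; this is too vague to evaluate, and would be unnecessary if the second paragraph actually went through (the compactness argument already delivers a genuinely $2$-generated closed $H$ with $\exp(H)=\lcm_k\exp(G/N_k)=\exp(G)$, infinite-exponent primes included).

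The paper takes a different and more structural route. It does \emph{not} work with an arbitrary chain of open normal subgroups. Instead, using the splitting theorem for prosupersolvable groups (Theorem~\ref{split}), it builds a chain $G=M_0\geqs M_1\geqs\cdots$ in which each $M_i/M_{i+1}$ is a Sylow $p_i$-subgroup of $G/M_{i+1}$, with $p_0<p_1<\cdots$ the primes dividing $|G|$ in increasing order. Each such Sylow is finitely generated (Theorem~\ref{pro-p f.g.}), so Zel'manov's theorem guarantees an element $g$ with $|g|=\exp(M_i/M_{i+1})$; this is where Zel'manov enters, at \emph{every} step of the induction, not as a postscript for infinite-exponent primes. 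The inductive step is: given a $2$-generated $H_i\leqs G/M_i$ with $\exp(H_i)=\exp(G/M_i)$, form $H_{i+1}=\langle g\rangle^{H_i}\rtimes H_i\leqs G/M_{i+1}$. The main technical input (Theorem~\ref{2-gen_pro}, via Proposition~\ref{2-gen}) is that such a semidirect product is again $2$-generated whenever $H$ is $2$-generated, $g$ is a $p$-element with $p\nmid|H|$, and $\langle g\rangle^H$ is a pro-$p$ group; the finite case is proved with crown-based powers, the decisive observation being that $\langle g\rangle^H$ modulo its Frattini subgroup is a \emph{cyclic} $\mathbb{F}_p[H]$-module, which forces $k\leqs r$ by a Wedderburn--Artin count. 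Gasch\"utz's theorem then lets one choose the two generators of $H_{i+1}$ compatibly with those of $H_i$, producing a nested sequence of cosets $\Omega_i=x_iM_i\times y_iM_i$ in $G\times G$ whose intersection is non-empty by compactness.

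In short, the paper's key lemma is a $2$-generation statement for a specific semidirect product, tailored to a Sylow-by-Sylow chain, rather than a general normal-series exponent result; and the chain is engineered precisely so that this lemma applies at each stage.
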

	
The main step in the proof is to construct a chain of closed subsets of $G\times G$ such that, by the compactness of $G$, their intersection is non-empty and contains the pair of generators that we are looking for. This chain will be constructed inductively, using a new result on $2$-generated prosolvable groups (see Theorem \ref{2-gen_pro}).

\vspace{2mm}

\noindent \textbf{Notation.} Our notation is fairly standard. Let $G$ be a group and let $n$ be a positive integer. We write $G^n$ for the direct product $G \times \cdots \times G$ ($n$ factors) and $\pi(n)$ for the set of prime divisors of $n$. If $x,y \in G$ and $N$ is a normal subgroup of $G$ then we write $x \equiv y \mbox{ mod }N$ if $Nx = Ny$. If $G$ is finitely generated then $d(G)$ denotes the minimal size of a generating set for $G$, and if $G$ is finite we write ${\rm Soc}(G)$ for the socle of $G$, which is the subgroup generated by the minimal normal subgroups of $G$. Finally, if $G$ is a profinite group and $H$ is a closed subgroup then we define the prime graph $\Gamma(G)$, the index $|G:H|$ and the exponent ${\rm exp}(G)$ as above. Also, if $N$ is an open normal subgroup of $G$ then we denote this by writing $N\unlhd_o G$.

\section{Preliminaries}\label{s:prel}

Here we record some preliminary results that will be useful in the proof of the main theorems. 
Our main references for profinite groups are \cite{Ribes} and \cite{Wilson}. If we refer to a subgroup of a profinite group, then it is assumed to be closed, unless stated otherwise. 

As noted in the Introduction, \emph{crown-based powers} play an important role in the proofs of Theorems A, B and C. In order to give the definition, recall that a \emph{monolithic} group is a finite group with a unique minimal normal subgroup (its socle).

\begin{defn}\label{d:cbp}	
Let $L$ be a monolithic group with socle $A$. If $A$ is abelian, assume also that $A$ has a complement in $L$.
For each positive integer $k$, the \emph{crown-based power} of $L$ of size $k$ is the subgroup $L_k$ of $L^k$ defined by
\begin{displaymath}\label{crown_eq}
L_k = \{(l_1,\ldots,l_k)\in L^k \mid l_1\equiv \cdots \equiv l_k \textrm{ mod } A\}.
\end{displaymath}
\end{defn}
		
Clearly, $\Soc(L_k)$ is a direct product of $k$ minimal normal subgroups (each isomorphic to $A$), and $L_k/\Soc(L_k) \cong L/ \Soc(L)$. In addition, if $T$ is a complement for $A$ in $L$, then the diagonal subgroup
%\begin{equation}
\[\diag(T) = \{ (t,\ldots, t) \mid t\in T\} \leq L_k\]
%\end{equation}
is a complement for $A^k$ in $L_k$, i.e., $L_k=A^k \diag(T)$. Moreover, it is easy to see that the quotient group of $L_{k+1}$ over any minimal normal subgroup is isomorphic to $L_k$. 

We will need the following result on the normal structure of crown-based powers (see \cite[Lemma 2.5]{Bound}).
 
\begin{prop}\label{socN}
Let $L$ be a monolithic group and let $L_k$ be the crown-based power of $L$ of size $k$. If $N$ is a normal subgroup of $L_k$, then either $\Soc(L_k) \leq N$ or $N \leq \Soc(L_k)$.
\end{prop}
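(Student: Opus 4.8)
The plan is to work inside the ambient direct product $L^k$ and exploit the fact that $\Soc(L_k) = A^k$ where $A = \Soc(L)$. First I would fix notation: write $A_j$ for the $j$-th copy of $A$ inside $A^k = \Soc(L_k) \trianglelefteq L_k$, so that each $A_j$ is a minimal normal subgroup of $L_k$ (this is part of the discussion already recorded after Definition \ref{d:cbp}). Let $N \trianglelefteq L_k$ and suppose $N \not\leq \Soc(L_k)$; the goal is to force $\Soc(L_k) \leq N$. Since $N \cap A^k$ is a normal subgroup of $L_k$ contained in the completely reducible module $A^k$ (completely reducible as an $L_k$-module, since each $A_j$ is a minimal normal subgroup), it is a sum of a subset of the $A_j$'s; by relabelling, say $N \cap A^k = A_1 \times \cdots \times A_r$ with $r < k$ (if $r = k$ we are done). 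The task is then to derive a contradiction from $r < k$, i.e. to show that some further $A_j$ with $j > r$ must lie in $N$.

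The key step is to produce a nontrivial element of $N$ living in a single coordinate beyond the first $r$. Pick $x = (x_1,\ldots,x_k) \in N \setminus \Soc(L_k)$; then some coordinate, say $x_m$, satisfies $x_m \notin A$, and by the defining congruence all coordinates of $x$ are congruent mod $A$, so in fact $x_j \notin A$ for every $j$ — in particular $x_j A \neq A$ for all $j$. Now I would use commutators: for $a = (a_1, 1, \ldots, 1) \in A_1 \leq L_k$ (legitimate since $A_1 \leq N \cap A^k \leq L_k$, or more simply since $A_1 \leq \Soc(L_k) \leq L_k$ and is normal), the commutator $[x, a] = (\,[x_1, a_1],\, 1, \ldots, 1) \in N$ because $N$ is normal. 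More usefully, for $a = (a_1,\ldots,a_k) \in A^k$ we get $[x,a] = ([x_1,a_1],\ldots,[x_k,a_k]) \in N \cap A^k$. So I want to choose $a$ supported on coordinate $m > r$ to get $[x_m, a_m] \in A$ nontrivial in that slot, contradicting $N \cap A^k = A_1 \times \cdots \times A_r$. This works provided $[A, x_m] \neq 1$, i.e. $x_m$ does not centralize $A$ in $L$.

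The main obstacle is exactly the case where $x_m$ centralizes $A$ in $L$ — equivalently, where $C_L(A)$ is large. Here I would split into the two standard cases for a monolithic group. If $A$ is nonabelian, then $C_L(A) \cap A = Z(A) = 1$ and $C_L(A) \trianglelefteq L$, so by monolithicity $C_L(A) = 1$; hence no element outside the identity centralizes $A$, and since $x_m \notin A \supseteq \{1\}$ gives $x_m \neq 1$, we always get $[A, x_m] \neq 1$ — actually one must be slightly more careful and argue that $[A_m, x] $ together with the projection recovers all of $A_m$, using minimality of $A_m$ as a normal subgroup of $L_k$: $[A_m, x] \leq N \cap A^k$ is normal in $L_k$, nonzero, and contained in $A_m$, hence equals $A_m$, so $A_m \leq N$, contradicting $m > r$. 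If $A$ is abelian, then by hypothesis $A$ has a complement $T$ in $L$ and $L = A \rtimes T$ with $A$ a faithful irreducible $T$-module (faithful because $C_L(A) \cap T$ would be normal and avoid $A$). Writing $x_m = b t$ with $b \in A$, $t \in T$, $t \neq 1$ (since $x_m \notin A$), the map $a \mapsto [a, x_m] = a^{-1} a^{x_m} = a^{-1} a^{t}$ on $A$ (additively, $a \mapsto (t^{-1} - 1)a$) has image a nonzero $T$-submodule — nonzero precisely because $t$ acts nontrivially on the faithful module $A$ — hence by irreducibility the image is all of $A$. Then $[A_m, x] = A_m \leq N \cap A^k$, again contradicting $m > r$. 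Putting these together: in all cases $r < k$ is impossible, so $N \cap A^k = \Soc(L_k)$, i.e. $\Soc(L_k) \leq N$, completing the dichotomy. I expect the abelian case — specifically the bookkeeping that $t$ acts without the eigenvalue $1$ and hence $[A, x_m] = A$ by irreducibility — to be the part requiring the most care, though it is short.
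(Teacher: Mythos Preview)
The paper does not give a proof but simply cites \cite[Lemma~2.5]{Bound}, so there is no in-paper argument to compare against. Your commutator strategy is the natural one and does yield a proof, but two of the justifications you give are not valid as stated.

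First, the claim that $N\cap A^k$ must be a product of some of the coordinate factors $A_j$ does not follow from complete reducibility. When $A$ is abelian the $A_j$ are pairwise isomorphic as $L_k$-modules (the action of $L_k$ on $A^k$ factors through $L_k/A^k\cong L/A$, which acts diagonally), so $A^k$ has many $L_k$-submodules --- for instance the diagonal copy of $A$ --- that are not products of coordinate factors. Complete reducibility only tells you that a submodule is a sum of \emph{some} simple submodules, not of the particular ones you have fixed. Second, in your abelian-case analysis, the image of $a\mapsto[a,x_m]=a^{-1}a^{t}$ is not a $T$-submodule in general: the map $a\mapsto(t-1)\cdot a$ is $\mathbb{F}_p$-linear but not $T$-equivariant unless $t$ acts centrally, so you cannot invoke irreducibility of $A$ to conclude that the image is all of $A$. (Likewise, in the nonabelian case $[A_m,x]$ need not itself be normal in $L_k$.)

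The fix is short and in fact simplifies your outline. You only need $[A_m,x]\neq 1$, which you have already established in both cases (using $C_L(A)=1$ when $A$ is nonabelian, and faithfulness of the $T$-action --- which you justify correctly --- when $A$ is abelian). Since $[A_m,x]\leq N\cap A_m$ and $N\cap A_m$ is the intersection of two normal subgroups of $L_k$, it is itself normal in $L_k$; being nontrivial and contained in the minimal normal subgroup $A_m$, it must equal $A_m$. Hence $A_m\leq N$ for \emph{every} $m$, so $\Soc(L_k)=A^k\leq N$ at once. No preliminary description of $N\cap A^k$ is needed.
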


Let $G$ be a finitely generated group and let $d(G)$ be the minimal size of a generating set for $G$. Since the quotient group of $L_{k+1}$ over any minimal normal subgroup is isomorphic to $L_k$, the sequence $d(L_1),d(L_2), \ldots$ is increasing. Moreover, the main theorem of \cite{gener} implies that $d(L_{k+1}) \leq d(L_k) + 1$.  The next result determines $d(L_k)$ precisely in the cases we will be interested in. (In part (ii), $H^1(G,V)$ denotes the first cohomology group of $G$ on a $G$-module $V$, and $\End_{G}(V)$ is the group of endomorphisms of $V$ as a $G$-module.)

\begin{prop}\label{Maind(L_k)}
Let $L$ be a non-cyclic monolithic group with socle $A$, and let $L_k$ be the crown-based power of $L$ of size $k$.
\begin{itemize}
\item[{\rm (i)}] $d(L_1) = d(L) = \max\{2, d(L/A)\}$;
\item[{\rm (ii)}] Assume that $A$ is abelian and complemented in $L$. Write $q=|\End_{L/A}(A)|$, $q^r = |A|$, $q^s=|H^1(L/A,A)|$ and set $\theta=0$ or $1$ according to whether or not $A$ is a trivial $L/A$-module. Then
\[d(L_k) = \max\{d(L/A),\theta +\lceil(k+s)/r \rceil\},\]
where $\lceil x \rceil$ denotes the smallest integer greater than or equal to $x$.
\end{itemize}
\end{prop}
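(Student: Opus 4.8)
The plan is to derive both parts from the theory of the generating number $d(\cdot)$ of a finite group with an abelian normal subgroup, i.e. from Gasch\"utz's classical results together with their cohomological refinements.

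\textbf{Part (ii).} Fix a complement $T$ of $A$ in $L$, so that $T\cong L/A$ and, by the remarks after Definition \ref{d:cbp}, $L_k=A^k\rtimes\diag(T)$ with $\diag(T)\cong T$ and $\Soc(L_k)=A^k$; as a $\diag(T)$-module, $A^k$ is a direct sum of $k$ copies of $A$. First I would record the module-theoretic facts that make $q,r,s$ meaningful: since $L$ is monolithic with abelian socle, $C_L(A)=A$, so $A$ is a faithful $T$-module, and being a minimal normal subgroup it is irreducible; by Schur's lemma $\End_T(A)$ is a finite field $\mathbb{F}_q$, $A$ is an $\mathbb{F}_q$-space of dimension $r$ (so $|A|=q^r$), and functoriality makes $H^1(T,A)$ an $\mathbb{F}_q$-space of dimension $s$ (so $|H^1(T,A)|=q^s$). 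Note that in the situation of (ii) a faithful module is automatically non-trivial unless $T=1$, and $T=1$ would force $L$ cyclic; so in fact $\theta=1$ always here, the $\theta=0$ branch being stated only for parallelism with the general formula. The heart of the matter is then the Gasch\"utz-type formula for $d(V^m\rtimes H)$ with $V$ a non-trivial irreducible $\mathbb{F}_p[H]$-module: counting the epimorphisms from a free group $F_n$ onto $V^m\rtimes H$, one factors each through $F_n\to H$ and analyses, by means of $H^1$, when a surjection onto $H$ lifts to a surjection onto $V^j\rtimes H$, inductively for $j=1,\dots,m$; the resulting count shows $\mathrm{Epi}(F_n,\,V^m\rtimes H)\ne\emptyset$ exactly when $n\ge\max\{d(H),\,\theta+\lceil(m+s)/r\rceil\}$. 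Applying this with $H=T$, $V=A$, $m=k$, and using $d(L_k/A^k)=d(L/A)$, gives (ii). I would either reproduce this enumeration or quote it in the form recorded in \cite{crown} and \cite{Bound}.

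\textbf{Part (i).} By definition $L_1=L$, so the assertion is $d(L)=\max\{2,d(L/A)\}$. The inequality $\ge$ is immediate: $d(L)\ge 2$ because $L$ is non-cyclic, and $d(L)\ge d(L/A)$ because $L/A$ is a quotient of $L$. For $\le$ I would split on the socle. If $A$ is abelian — hence complemented, by the standing hypothesis in Definition \ref{d:cbp} — then $d(L)=d(L_1)$ is the $k=1$ case of (ii), equal to $\max\{d(L/A),\,\theta+\lceil(1+s)/r\rceil\}$; here $\theta=1$, and the Aschbacher--Guralnick bound $|H^1(T,A)|<|A|$ for a faithful irreducible module gives $s<r$, whence $\lceil(1+s)/r\rceil=1$ and the expression collapses to $\max\{2,d(L/A)\}$. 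If $A$ is non-abelian, then $A\cong S^m$ for a non-abelian simple group $S$, and $d(L)=\max\{2,d(L/A)\}$ is the known value of the generating number of a monolithic group with non-abelian socle; I would invoke this result (used in \cite{Bound} and going back to earlier work on generation of such groups), the mechanism being that a generating tuple of $L/A$ of size $\max\{2,d(L/A)\}\ge 2$ lifts to a generating tuple of $L$ thanks to the plentiful supply of generating pairs of $S$, which suffices to control the socle $S^m$.

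\textbf{Main obstacle.} The substantive point is the Gasch\"utz-type enumeration underpinning (ii): obtaining the exact threshold $\theta+\lceil(k+s)/r\rceil$ — as opposed to merely the crude bounds $d(L_k)\le d(L_{k+1})\le d(L_k)+1$ noted before the statement (the latter from \cite{gener}) — requires carrying the $\End_T(A)$-module $H^1(T,A)$ faithfully through the inductive passage from $V^{j-1}\rtimes H$ to $V^j\rtimes H$ and reading off precisely how many extra generators each successive copy of $A$ costs. The reductions in (i) and the module-theoretic preliminaries are routine by comparison.
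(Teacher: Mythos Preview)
Your proposal is correct. The paper's own proof consists of two bare citations --- part (i) to \cite[Theorem~1.1]{men}, part (ii) to \cite[Proposition~6]{Morini} --- and what you have outlined is essentially the content behind those references: the Gasch\"utz-type epimorphism count for (ii), and for (i) the split into non-abelian socle (where you too simply invoke the known result) versus abelian socle. In the abelian branch of (i) you take a mild shortcut, reading it off from the $k=1$ case of (ii) together with the Aschbacher--Guralnick inequality $s<r$, rather than citing \cite{men} directly; this is a slightly different path but entirely valid. Your side remark that the non-cyclic hypothesis forces $\theta=1$ is also correct (a trivial complemented socle gives $L=A\times T$ with $T\trianglelefteq L$, whence $T=1$ and $L$ cyclic of prime order); the $\theta=0$ case of the formula does get used later in the paper, in the proof of Proposition~\ref{p.g.gen}, but only in a situation where $|A|=2$, so that $L\cong\mathbb{Z}_2$ is cyclic and the proposition as stated does not literally apply.
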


\begin{proof}
Part (i) is a special case of \cite[Theorem 1.1]{men}, and part (ii) is a consequence of \cite[Proposition 6]{Morini}.
\end{proof}

For the purposes of this paper, the main connection between arbitrary finite groups and crown-based powers is provided by the following theorem (see \cite[Theorem 1.4]{L_k}).

\begin{thm}\label{G&L}
Let $m$ be a positive integer and let $G$ be a finite group such that $d(G/N) \leq m$ for every non-trivial normal subgroup $N$, but $d(G) >m$. Then there exists a monolithic group 
$L$ such that $G \cong L_k$ for some $k$, and $\Soc(L)$ is either non-abelian or complemented.
\end{thm}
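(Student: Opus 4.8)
The plan is to run the argument through the theory of \emph{crowns} of a finite group. To each chief factor $A$ of $G$ the theory attaches a monolithic group $L_A$ whose socle is $G$-isomorphic to $A$ (and, when $A$ is abelian, in which $A$ is complemented), together with a normal subgroup $R_G(A)$ of $G$ --- the intersection of all normal subgroups $H$ for which $G/H$ is monolithic with socle $G$-isomorphic to $A$ (complemented, when $A$ is abelian) --- such that $G/R_G(A)\cong (L_A)_{k}$ is a crown-based power in the sense of Definition~\ref{d:cbp}, for a suitable integer $k=k(A)\ge 0$. The one substantial fact I would quote from the literature is the resulting description of the number of generators,
\[ d(G)\;=\;\max_{A}\, d\bigl(G/R_G(A)\bigr), \]
the maximum running over the finitely many $G$-isomorphism types of chief factors $A$ of $G$; the inequality $d(G)\ge\max_A d(G/R_G(A))$ is immediate, since each $G/R_G(A)$ is a quotient of $G$, so the content is the reverse inequality.

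Granting this, the proof is short. Fix a type $A$ realising the maximum, so that $d\bigl(G/R_G(A)\bigr)=d(G)$. I claim $R_G(A)=1$. If not, $R_G(A)$ contains a minimal normal subgroup $N_0$ of $G$, whence $G/R_G(A)$ is a quotient of $G/N_0$ and
\[ d(G)\;=\;d\bigl(G/R_G(A)\bigr)\;\le\;d(G/N_0)\;\le\;m, \]
contradicting $d(G)>m$; the last inequality is the hypothesis, which it is enough to apply to minimal normal subgroups since passing to a quotient never increases the minimal number of generators. Hence $R_G(A)=1$ and $G=G/R_G(A)\cong (L_A)_{k}$ with $L:=L_A$ monolithic. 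Since $d(G)>m\ge 1$ we have $d(G)\ge 2$, so $G\ne 1$ and therefore $k\ge 1$; thus $G\cong L_k$, as claimed. Finally, the socle $A$ of $L$ is non-abelian or complemented: in the abelian case this is built into the definitions of $R_G(A)$ and $L_A$, which retain only the \emph{complemented} chief factors of type $A$, so that $L_A$ is the split extension of $A$ by $L_A/A$.

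The real content, and the main obstacle, is the crown machinery invoked above: the structure isomorphism $G/R_G(A)\cong (L_A)_{k}$ and, above all, the formula $d(G)=\max_A d(G/R_G(A))$. The latter cannot be reached by a naive induction that peels off one minimal normal subgroup at a time and compares $d(G)$ with $d(G/N)$: the crown-based powers themselves show that $d$ may strictly increase from $G/N$ to $G$, so an entire $G$-isomorphism class of chief factors must be analysed simultaneously, which is exactly what crowns achieve. A further delicate point is the abelian/Frattini dichotomy: one must verify that Frattini chief factors, which never affect $d(G)$ by Gasch\"utz's theorem $d(G)=d(G/\Phi(G))$, are discarded correctly, so that the monolithic group produced genuinely has complemented socle and the resulting crown-based power is well-defined.
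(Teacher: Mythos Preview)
The paper does not prove this theorem at all: it is stated with the attribution ``see \cite[Theorem 1.4]{L_k}'' (Dalla Volta--Lucchini) and no argument is given. So there is no proof in the paper to compare against beyond the bare citation.

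Your deduction from the crown formula $d(G)=\max_A d(G/R_G(A))$ is correct and cleanly written: once that formula is granted, choosing $A$ attaining the maximum and observing that $R_G(A)\ne 1$ would force $d(G)\le d(G/N_0)\le m$ is exactly the right move, and the remark that $k\ge 1$ because $d(G)\ge 2$ is a nice touch. The handling of the abelian/complemented case is also accurate: in the crown setup one only retains non-Frattini abelian chief factors, so the monolithic primitive $L_A$ produced automatically has complemented socle.

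One point you should be explicit about, though you gesture at it in your closing paragraph: the formula you invoke is not logically upstream of the theorem you are proving. The two statements are essentially equivalent --- the original Dalla Volta--Lucchini argument (analysing when a generating set of $G/N$ lifts to $G$ for $N$ a non-Frattini minimal normal subgroup, and reducing this to a single crown) is precisely what one uses to establish the generation formula in the first place, and conversely the formula yields the theorem in the three lines you wrote. So what you have given is best described as a reformulation in the later crown language of Detomi--Lucchini rather than an independent route; your final paragraph is honest about this, correctly locating the real work in the structure isomorphism $G/R_G(A)\cong(L_A)_k$ and the generation formula.
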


We close this section by giving a brief overview of our general strategy, which will explain the relevance of Theorem \ref{G&L} (strictly speaking, we use a variant of this approach in the proof of Theorem C). Let $G$ be a profinite group and let $i(G)$ be a group invariant. We begin with an initial reduction to the case where $G$ is countably based, so we can consider a chain 
\[G = M_0 \ge M_1 \ge \cdots\]
of open normal subgroups of $G$, which form a base of open neighbourhoods of $1$. 

Next, by choosing $d \in \mathbb{N}$ appropriately (i.e. $d=3$ in the proof of Theorem A, and $d=2$ in Theorem B), we define a collection of closed subsets of $G^d$,
\[\Omega _j = \{(x_1, \ldots, x_d) \in G^d \mid i(\la x_1M_j, \ldots, x_dM_j\ra) = i(G/M_j)\}\]
for each $j=0,1,2,\ldots$, and we note that $\Omega _j$ is non-empty by known results for finite groups. For example, we can appeal to \cite[Theorem C]{Bound} if $i(G) = \Gamma(G)$ and $d=3$. The key step is to establish the existence of an element
\[(x_1, \ldots, x_d) \in \bigcap_{j \ge 0}\Omega _j.\]
Indeed, given such an element we can show that $i(C) = i(G)$, where $C = \overline{\la x_1, \ldots, x_d\ra}$, and this is how the main theorems are proved.

In order to prove that this intersection is non-empty, it suffices to show (by the compactness of $G$) that every finite subcollection of the $\Omega _i$ has non-empty intersection. In turn, this problem can be stated in terms of finite groups as follows: given a normal series 
\[1 = T_n \le T_{n-1} \le \cdots \le T_1 \le T_0 = T\]
of a finite group $T$, it suffices to show that there exists a $d$-generated subgroup $S$ of $T$ such that  $i(ST_j/T_j) = i(T/T_j)$ for all $j$. To establish the existence of $S$, we assume that there is no \emph{proper} subgroup $S$ of $T$ with the desired property, and our aim is to show that $d(T) \le d$ (so we can take $S=T$). 

Given such a group $T$, there exists a normal subgroup $N$ of $T$ such that $d(T/N) = d(T)$ and every proper quotient of $T/N$ can be generated by $d(T)-1$ elements. At this point, we can apply Theorem \ref{G&L}, which implies that $T/N \cong L_k$ for some monolithic group $L$ and an integer $k \ge 1$. Our aim now is to show that $k \le t$ for some $t$, which depends on the invariant we are considering (for example, if $i(G) = \Gamma(G)$ then the proof of Proposition \ref{p.g.gen} reveals that $k \le 3$). We can then appeal to Proposition \ref{Maind(L_k)} to compute $d(L_t)$, which gives an upper bound for $d(T)$. In this way, we can show that $d(T) \le d$, as required.

\section{Proof of Theorem A}\label{s:A}

Let $G$ be a profinite group and let $\pi(G)$ be the set of primes dividing $|G|$, the order of $G$. Recall that $\Gamma(G)$ denotes the \emph{prime graph} of $G$. This is a graph with vertex set $\pi(G)$, and two vertices $p$ and $q$ are adjacent if and only if $G$ has an element of order $pq$.

By a theorem of Lucchini, Morigi  and Shumyatsky \cite[Theorem C]{Bound}, if $G$ is finite then there exists a $3$-generated subgroup $H$ of $G$ such that $\G(H)=\G(G)$. Moreover, this is best possible. Indeed, there is a $3$-generated finite group $G$ such that $\Gamma(H) \neq \Gamma(G)$ for every $2$-generated subgroup $H$ of $G$ (see \cite[p.883]{Bound}).

In order to extend this result to profinite groups, we will establish the following generalization of \cite[Theorem C]{Bound}, which we can readily apply in the profinite case.

\begin{prop}\label{p.g.gen}
Let $G$ be a finite group and let $1=M_n \le \cdots \le M_1 \le M_0=G$ be a normal series for $G$. Then there exists a $3$-generated subgroup $H$ of $G$ such that $\Gamma(HM_i/M_i)=\Gamma(G/M_i)$ for all $i$.
\end{prop}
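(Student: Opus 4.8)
The plan is to argue by induction on the order of $G$, combined with the general strategy outlined at the end of Section~\ref{s:prel}. I would first reduce to showing the following equivalent statement: if $G$ is a finite group with a normal series $1 = M_n \le \cdots \le M_0 = G$ such that \emph{no proper subgroup} $H < G$ satisfies $\Gamma(HM_i/M_i) = \Gamma(G/M_i)$ for all $i$, then $d(G) \le 3$ (and then we may take $H = G$). So assume such a $G$ is a counterexample-free minimal configuration and aim to bound $d(G)$.

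Given such a $G$, choose a normal subgroup $N \unlhd G$ maximal with respect to $d(G/N) = d(G)$; then every proper quotient of $\bar G := G/N$ satisfies $d(\bar G / \bar K) \le d(G) - 1$. If $d(G) \le 3$ we are done, so suppose $d(G) \ge 4$, i.e. $d(\bar G) \ge 4 > 3 \ge d(\bar G/\bar K)$ for all nontrivial $\bar K \unlhd \bar G$. Theorem~\ref{G&L} then applies with $m = 3$: $\bar G \cong L_k$ for a monolithic group $L$ whose socle $A$ is either nonabelian or complemented, and some $k \ge 1$. The heart of the argument is to show $k \le 3$ (at which point Proposition~\ref{Maind(L_k)} forces $d(G) = d(L_k) \le 3$, contradicting $d(G) \ge 4$ and completing the induction — I would need to check the numerics of Proposition~\ref{Maind(L_k)}(ii) to confirm $d(L_3) \le 3$ when $d(L) \le 3$, using that $r \ge 1$, $s \le r$ or the relevant cohomological bound, and handling the nonabelian-socle case via part~(i)).

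To bound $k$, here is where the hypothesis on $G$ gets used. Since $\bar G = G/N \cong L_k$, pull back the chief series of $L_k$ through its socle factors: by Proposition~\ref{socN}, for $j = 0, \ldots, k$ the subgroups sitting between $N$ and the preimage of $\Soc(L_k)$ refine, and the quotients $G/(\text{preimage of the } j\text{-th socle layer})$ are crown-based powers $L_j$. Now consider the normal series obtained by interleaving the given series $(M_i)$ with these pullbacks and with $N$; call it $1 = T_m \le \cdots \le T_0 = G$. If $k \ge 4$, I claim one can build a \emph{proper} subgroup $H$ of $G$ realizing all the prime graphs $\Gamma(G/T_j)$ — and hence all the $\Gamma(G/M_i)$ since the $M_i$ appear among the $T_j$ — contradicting minimality. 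The construction of $H$ uses the original finite theorem \cite[Theorem C]{Bound} (a $3$-generated subgroup of $L/A$ realizing $\Gamma(L/A)$) together with the diagonal-complement structure $L_k = A^k \diag(T)$: the point is that for $k \ge 4$ a $3$-generated subgroup of $L_k$ can already meet every chief layer $A$ sufficiently — essentially because three generators can hit the first few diagonal copies of $A$, and $\Gamma$ only sees \emph{which} orders $pq$ occur, not multiplicities, so passing from $L_k$ to $L_3$ (or to a suitable $3$-generated subgroup) does not change any of the prime graphs $\Gamma(L_j)$ for $j \le k$. Making this last clause precise — that the prime graph stabilizes after a bounded number of crown layers, so that a bounded-rank subgroup captures it — is the main obstacle, and it is exactly the place where the constant $3$ (rather than $4$) is squeezed out; I expect to mirror closely the computation in the proof of \cite[Theorem C]{Bound}, tracking how elements of order $pq$ in $L_k/M$ can be realized inside a $3$-generated diagonal-type subgroup.

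Finally I would assemble the induction: if $d(G) \le 3$ take $H = G$; otherwise the above shows $d(G) \le 3$ anyway, a contradiction, so in fact $G$ always has a \emph{proper} subgroup $G_1 < G$ with $\Gamma(G_1 M_i/M_i) = \Gamma(G/M_i)$ for all $i$. Intersecting the given series with $G_1$ gives a normal series of the strictly smaller group $G_1$, to which the induction hypothesis applies, yielding a $3$-generated $H \le G_1 \le G$ with $\Gamma(H (G_1 \cap M_i)/(G_1\cap M_i)) = \Gamma(G_1/(G_1 \cap M_i)) = \Gamma(G_1 M_i/M_i) = \Gamma(G/M_i)$ for all $i$; since $H M_i/M_i \cong H/(H \cap M_i)$ surjects appropriately, one checks $\Gamma(H M_i/M_i) = \Gamma(G/M_i)$, as required. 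The base case $G = 1$ (or $|G|$ prime) is trivial.
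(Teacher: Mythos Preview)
Your overall architecture matches the paper's: reduce to ``no proper subgroup works implies $d(G)\le 3$'', apply Theorem~\ref{G&L} to write $G/N\cong L_k$, bound $k$, and read off $d(L_k)$ from Proposition~\ref{Maind(L_k)}. The induction wrapper at the end is also correct.

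The substantive gap is that a uniform bound $k\le 3$ is \emph{not} enough to conclude $d(L_k)\le 3$, and this is exactly the place where you defer (``I would need to check the numerics''). Concretely: if $A$ is abelian with $|A|>2$ and $A$ is a nontrivial $L/A$-module, then in the notation of Proposition~\ref{Maind(L_k)}(ii) one can have $r=1$, $s=0$, $\theta=1$, and then $d(L_3)=1+\lceil 3/1\rceil=4$. Likewise, if $A$ is non-abelian, Lemma~\ref{L_k3-gen} only gives $d(L_k)\le 3$ for $k\le 2$; there is no statement for $k=3$. So the paper does \emph{not} prove $k\le 3$ uniformly. It proves a case-dependent bound: $k\le 1$ if $A$ is not simple, $k\le 2$ if $A$ is simple with $|A|>2$ (abelian or not), and $k\le 3$ only in the residual case $|A|=2$ (where $\theta=0$ and the numerics do go through). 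Your plan, as written, would fail at the numerics step.

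Getting those sharper bounds on $k$ is precisely the technical core, and it is where your sketch is thinnest. The paper builds an explicit subgroup $X\le G$ with $X/N\cong T_u\cong L_u$ for $u\in\{1,2,3\}$ (chosen according to the case above) and then verifies $\Gamma(XM_i/M_i)=\Gamma(G/M_i)$ for every $i$; minimality of $G$ then forces $X=G$, i.e.\ $k=u$. The verification is not just ``$\Gamma$ ignores multiplicities'': the delicate case is when an element of order $pq$ in $G/M_i$ projects to an element of order $p$ modulo $N$, with $q\mid |N|$. Here one must produce an element of order $pq$ \emph{inside} $X$, and the paper does this via Lemma~\ref{quaternion} (a Sylow $p$-subgroup of $X/N$ is non-cyclic and, when $p=2$, not generalized quaternion, so it cannot act fixed-point-freely on a $q$-group). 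Your outline does not touch this mechanism, and without it the argument that the candidate proper subgroup captures all prime-graph edges does not go through.
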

	
To prove this result, we need the following two lemmas. In the statement of Lemma \ref{L_k3-gen}, recall that a finite group $L$ is \emph{almost simple} if $S \leq L \leq {\rm Aut}(S)$ for some finite non-abelian simple group $S$. Note that every almost simple group is monolithic.

\begin{lem}\label{quaternion}
Let $p$ and $q$ be primes, and let $P$ be a $p$-group acting on a $q$-group $Q$ such that  $C_Q(a)=1$ for all $1 \neq a \in P$. Then either $P$ is cyclic, or $p=2$ and $P$ is generalized quaternion.
\end{lem}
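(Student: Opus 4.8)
The plan is to reduce the lemma to the classical structure theorem for finite $p$-groups having a unique subgroup of order $p$: any such group is cyclic, or generalized quaternion when $p=2$ (see, e.g., Gorenstein, \emph{Finite Groups}, Theorem~5.4.10). Since a non-cyclic $p$-group with $p$ odd, and a $2$-group that is neither cyclic nor generalized quaternion, always contains a subgroup isomorphic to $C_p \times C_p$, it suffices to show that $P$ contains no such subgroup. We may assume $Q \neq 1$. I would first dispose of the case $p=q$: there the natural semidirect product $R = Q \rtimes P$ is a finite $p$-group, so the non-trivial normal subgroup $Q$ meets the centre of $R$ non-trivially; as every element of $Q \cap Z(R)$ is centralized by all of $P$, we get $1 \neq Q \cap Z(R) \leq C_Q(a)$ for every $a \in P$, and the hypothesis forces $P=1$, which is cyclic. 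Hence we may assume $p \neq q$, so that every subgroup of $P$ acts coprimely on $Q$.

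Now suppose, for a contradiction, that $P$ has a subgroup $E \cong C_p \times C_p$. Then $E$ acts coprimely on $Q$, which is solvable (being a $q$-group), so the standard coprime-action generation lemma applies: a non-cyclic abelian group $E$ acting coprimely on a solvable group $Q$ satisfies $Q = \langle C_Q(A) : A \leq E,\ |E:A| = p \rangle$ (see, e.g., Gorenstein, \emph{Finite Groups}, Theorem~5.3.16, or Kurzweil--Stellmacher, \emph{The Theory of Finite Groups}, (8.3.4)). Every subgroup $A$ occurring here has order $p$, say $A = \langle a \rangle$, so $C_Q(A) = C_Q(a) = 1$ by hypothesis, whence $Q = 1$ --- a contradiction. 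Therefore $P$ contains no copy of $C_p \times C_p$, and the lemma follows from the structure theorem quoted above.

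The single substantive ingredient here is the coprime-action generation lemma; everything else is routine, and that is where I expect the only real work to lie. Should a self-contained argument be preferred, the instance needed --- namely, that $E \cong C_p \times C_p$ acting coprimely on a $q$-group $Q$ with $C_Q(a)=1$ for all $1 \neq a \in E$ forces $Q=1$ --- can be proved directly by induction on $|Q|$: assuming $Q \neq 1$, the Frattini subgroup $\Phi(Q)$ is a proper $E$-invariant subgroup with $C_{\Phi(Q)}(a) \leq C_Q(a) = 1$, so by induction $\Phi(Q) = 1$ and $Q$ is an elementary abelian $q$-group, i.e.\ an $\mathbb{F}_q E$-module with $p \neq q$. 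Extending scalars to a finite field $K \supseteq \mathbb{F}_q$ that contains a primitive $p$-th root of unity, Maschke's theorem writes $Q \otimes_{\mathbb{F}_q} K$ as a direct sum of one-dimensional $E$-eigenspaces; an eigenspace affording a non-trivial character of $E$ lies in $C_Q(A) \otimes_{\mathbb{F}_q} K$, where $A$ is the kernel of that character, a subgroup of order $p$, while the eigenspace affording the trivial character equals $C_Q(E) \otimes_{\mathbb{F}_q} K$. Since $C_Q(A)$ and $C_Q(E)$ are each contained in some $C_Q(a) = 1$, every eigenspace vanishes, so $Q \otimes_{\mathbb{F}_q} K = 0$ and hence $Q = 1$, the desired contradiction. (One could also deduce the whole lemma from the structure theory of Frobenius complements, since the hypothesis makes $Q \rtimes P$ a Frobenius group with kernel $Q$ and complement $P$; but the direct argument above seems cleaner.)
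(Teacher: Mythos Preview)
Your argument is correct. The paper itself gives no proof at all: it simply cites Gorenstein, \emph{Finite Groups}, Proposition~10.3.1(iv), which is exactly the assertion that a $p$-group acting fixed-point-freely on a non-trivial group must be cyclic or generalized quaternion. What you have written is a standard unpacking of that result, combining the classification of $p$-groups with a unique subgroup of order $p$ (Gorenstein~5.4.10) with the $C_p\times C_p$ coprime-generation lemma; this is essentially how the cited proposition is proved, and your self-contained module-theoretic alternative for the generation step is also fine. One small remark: your phrase ``We may assume $Q\neq 1$'' undersells the point --- the statement is vacuous (indeed false) when $Q=1$, so $Q\neq 1$ is really an implicit hypothesis of the lemma, consistent with how it is applied later in the paper.
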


\begin{proof}	
This is \cite[Proposition 10.3.1(iv)]{Gores}.
\end{proof}

\begin{lem}\label{L_k3-gen}
Let $L$ be an almost simple group. If $k \leq 2$ then $d(L_k) \le 3$.
\end{lem}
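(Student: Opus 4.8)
The plan is to use the structure theory from Section \ref{s:prel}, together with Proposition \ref{Maind(L_k)}, to reduce to an explicit computation of $d(L_k)$ for $k \leq 2$. Since $L$ is almost simple, its socle $A = \Soc(L)$ is non-abelian, so Proposition \ref{Maind(L_k)}(ii) does not apply directly; instead, the quotient $L_k/\Soc(L_k) \cong L/A$ is contained in the outer automorphism group of a finite non-abelian simple group (modulo $A$-inner bits), and by the Classification this is metacyclic, hence $d(L/A) \leq 2$. So part (i) of Proposition \ref{Maind(L_k)} already gives $d(L_1) = d(L) = \max\{2, d(L/A)\} = 2 \leq 3$, handling the case $k=1$.

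The main work is the case $k=2$. Here I would invoke the general bound noted just before Proposition \ref{Maind(L_k)}: from the main theorem of \cite{gener}, $d(L_{k+1}) \leq d(L_k) + 1$, so $d(L_2) \leq d(L_1) + 1 = d(L) + 1$. Combined with $d(L) = \max\{2, d(L/A)\} \leq 2$, this yields $d(L_2) \leq 3$ immediately, which is exactly the assertion.

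So the whole argument is short: (1) identify $A = \Soc(L)$ as non-abelian and observe $L/A \hookrightarrow \Out(S)$ for the simple group $S$ with $S \leq L \leq \Aut(S)$, whence $d(L/A) \leq 2$ by the Classification (outer automorphism groups of finite simple groups are metacyclic — indeed at most $2$-generated); (2) apply Proposition \ref{Maind(L_k)}(i) to get $d(L) = \max\{2, d(L/A)\} = 2$; (3) apply the bound $d(L_{k+1}) \leq d(L_k)+1$ from \cite{gener} once to conclude $d(L_2) \leq 3$. The only point requiring any care — and the one I would expect to be the "main obstacle", though it is really a citation rather than an obstacle — is step (1): one must be sure that for an almost simple group the relevant quotient $L/A$ is genuinely $2$-generated. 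This follows from the known description of automorphism groups of finite simple groups (diagonal, field, and graph automorphisms), where the outer automorphism group is always a (rather small) solvable group generated by at most two elements. With that in hand, Lemma \ref{L_k3-gen} drops out of Proposition \ref{Maind(L_k)}(i) and the incremental bound on $d(L_k)$.
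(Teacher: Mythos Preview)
Your argument has a genuine gap at step (1). The assertion that $\Out(S)$ is metacyclic for every finite simple group $S$ is false, and the weaker conclusion you need --- that every subgroup of $\Out(S)$ is $2$-generated --- also fails. The standard counterexamples come from triality: take $S = P\Omega_8^+(q)$ with $q$ an odd square. Here the diagonal automorphisms give $C_2 \times C_2$, the graph automorphisms give $S_3$ acting faithfully on this (so together they form $S_4$), and the field automorphism of order $2$ centralises both, so $\Out(S) \cong S_4 \times C_2$. This contains $V_4 \times C_2 \cong C_2^3$ as a subgroup, and the corresponding almost simple group $L$ satisfies $d(L/A) = 3$, hence $d(L) = 3$ by Proposition~\ref{Maind(L_k)}(i).

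For such an $L$, the crude increment bound $d(L_{k+1}) \le d(L_k)+1$ from \cite{gener} gives only $d(L_2) \le 4$, which does not prove the lemma. The paper itself gives no argument here and simply cites Lemma~3.3 of \cite{Bound}; the proof there does not go via $d(L)=2$ and the increment bound, but instead uses a sharper estimate for $d(L_k)$ in the non-abelian socle case. The point is that when $A$ is a non-abelian simple group, passing from $L$ to $L_2$ does not in fact increase the number of generators: one has $d(L_2) = d(L)$, essentially because the number of $\Aut(A)$-orbits on generating $d(L)$-tuples is large compared with $2$. Combined with $d(L) = \max\{2,d(L/A)\} \le 3$ (which \emph{is} true, since $\Out(S)$ always has a normal series of length at most $3$ with cyclic factors), this gives $d(L_2) \le 3$. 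Your increment bound is simply too coarse to detect this.
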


\begin{proof}
See Lemma 3.3 in \cite{Bound}.
\end{proof}

In what follows, given two graphs $\G_1$ and $\G_2$, we say that $\G_1 \subseteq \G_2$ if and only if they have the same set of vertices, and if $x,y$ are connected by an edge in $\G_1$, then they are also adjacent in $\G_2$.

\begin{proof}[Proof of Proposition \ref{p.g.gen}]
%The basic idea is to show that if we assume $G$ has no proper subgroup $H$ with the required property, then $d(G)\leq 3$.
%
First observe that it suffices to prove the following claim:

\vs

\noindent \textbf{Claim 1.} \emph{If $G$ has no proper subgroup $H$ such that $\G(HM_i/M_i)=\G(G/M_i)$ for all $i$, then $d(G) \le 3$.}

\vs

\noindent Clearly, in this situation, the conclusion to Proposition \ref{p.g.gen} holds with $H=G$. To see that Claim 1 is sufficient, suppose that $G$ has a proper subgroup $H$ such that $\G(HM_i/M_i)=\G(G/M_i)$ for all $i$. If $d(H) \le 3$ then we are done, so assume $d(H)>3$. Set $T_i = H \cap M_i$ for each $i$, so 
$1 = T_n \le \cdots \le T_1 \le T_0=H$ is a normal series for $H$. Then $H$ must have a proper subgroup $K$ such that $\Gamma(KT_i/T_i) = \Gamma(H/T_i)$ for all $i$, otherwise Claim 1 implies that $d(H) \le 3$, which is not the case. It is easy to see that $\Gamma(KM_i/M_i) =\Gamma(G/M_i)$ for all $i$, so we are done if $d(K) \le 3$. If $d(K)>3$, then we can repeat the process, with $K$ in place of $H$. In this way, since $G$ is finite, we will eventually find a $3$-generated subgroup of $G$ with the desired property, so Claim 1 is indeed sufficient to prove the proposition.

\begin{proof}[Proof of Claim 1]

Clearly, we may assume that $G$ is non-cyclic. Let $N$ be a normal subgroup of $G$ such that $d(G/N)=d=d(G)$, but every proper quotient of $G/N$ can be generated with $d-1$ elements. Then Theorem \ref{G&L} implies that 
$G/N\cong L_k$ for a suitable $k$, where $L$ is a monolithic group and $\Soc(L)=A$ is non-abelian or complemented. 

Consider the series
\[N / N\leq M_{n-1}N/N\leq\cdots\leq M_iN/N\leq\cdots\leq G/N\cong L_k.\]
We can refine this series to obtain a chief series of $G/N$, and it suffices to show that the desired conclusion holds for this series. 

By Proposition \ref{socN}, there exist integers $h,j$ such that $0\leq h < j \leq n$ and
\[M_hN/N\cong\textrm{Soc}(G/N)=\textrm{Soc}(L_k), \;\; M_hN/M_jN\cong \textrm{Soc}(L),\;\; G/M_jN\cong L.\] 
So for each $i$, either $G/M_iN \cong L_{t_i}$ for some $1\leq t_i\leq k$, or $G/M_iN$ is isomorphic to a quotient of $L$. More precisely: 
\begin{itemize}
\item If $i<j$ then $G/M_iN\cong (G/M_jN)/(M_iN/M_jN)$ is isomorphic to a quotient of $L$.  
\item if $i\geq j$ then
\[M_iN / N \leq \Soc( G / N) \cong \Soc(L_k) = A^k\]
and thus $M_iN / N \cong A^{u_i}$ with $u_i \leq k$. This implies that 
\[G / M_iN \cong L_k / A^{u_i} \cong L_{t_i}\]
with $t_i + u_i = k$. 
\end{itemize}

Fix an isomorphism $\varphi: L_k\rightarrow G/N$ and write $\Soc(L_k) = A_1 \times \cdots \times A_k$, a direct product of minimal normal subgroups (each isomorphic to $A$). We can choose the $A_r$ so that 
\[M_iN/N = \varphi \Bigg( \prod_{r>t_i}A_r \Bigg)\]
if $i\geq j$. 
For all $i\leq k$, define
\[T_i=\{(l_1,\ldots,l_k)\in L_k \mid  l_r=l_i \mbox{ for all } r>i \}\leq L_k\]
Note that $T_i\cong L_i$. We use the $T_i$ to define a subgroup $X \le G$ in the following way:
\begin{itemize}
\item If $k=1$ or if $A$ is not simple, let $X/N=\varphi(T_1)$;
\item If $k=2$ or $A$ is simple and $|A|>2$, let $X/N=\varphi(T_2)$;
\item Otherwise $|A|=2$, $k\geq 3$ and we let $X/N=\varphi(T_3)$.
\end{itemize}
Note that $X/N\cong L_u$ for some $u\leq k$. Our aim is to prove the following claim:

\vspace{2mm}

\noindent \textbf{Claim 2.} $X=G$. 
 
%\vspace{2mm} 
 
\begin{proof}[Proof of Claim 2]
If $k=1$, or if $k=2$ and $A$ is simple, then the claim is obviously true. Therefore, we may assume that the following hypothesis holds:
\begin{equation}\label{hyp}
\mbox{$k\geq 2$, and $A$ not simple if $k=2$.}
\end{equation}
We will prove that $\Gamma(XM_i/M_i)=\Gamma(G/M_i)$ for every $i$. Since $N\leq X$, it suffices to prove that  $\Gamma(XM_i/M_iN)=\Gamma(G/M_iN)$ then, using the assumption that $G$ has no proper subgroups satisfying this equality, we will deduce that $X=G$. We distinguish two cases, according to the value of $i$.

\vspace{2mm}
		
\noindent \textit{Case 1.} $i<j$ 

\vspace{2mm}

Here $G/M_iN$ is isomorphic to a quotient of $L$, and $\varphi(\Soc(L_k)) \leq M_iN/N$. Moreover, since $T_i\Soc(L_k) = L_k$ by definition of $T_i$, we get
\[G/N  = \varphi (L_k) = \varphi (T_i \Soc(L_k) ) = \varphi(T_i)\varphi(\Soc(L_k)) \leq XM_iN / N =XM_i / N\]
and thus $G=XM_i$, so $\G(G/M_i) = \G(XM_i / M_i )$ as required.

\vspace{2mm}

\noindent \textit{Case 2.}  $i\geq j$ 

\vspace{2mm}

Here $G/M_iN\cong L_{t_i}$ for some $1\leq t_i \leq k$. Set  
\[\ol{X}=XM_i/M_i, \;\; \ol{G}=G/M_i, \;\; \ol{N}=M_iN/M_i\]
and note that
\[\ol{G}/\ol{N} \cong (G/M_i) / (M_i N/M) \cong G/M_iN\cong L_{t_i}.\]
Our aim is to show that $\G(\ol{X})=\G(\ol{G})$, or equivalently $\G(\ol{X} / \ol{N} )=\G(\ol{G} / \ol{N} )$.
		
Since $X/N\cong L_u$ we have 
\[|G/N| = |L_k| = |L/A| |A|^k,\;\; |X/N| = |L_u| = |L/A| |A|^u\]
and thus $\pi(X) = \pi(G)$. In particular, $\pi(\ol{X})=\pi(\ol{G})$, so the graphs $\G(\ol{X})$ and $\G(\ol{G})$ have the same set of vertices.  It remains to prove that they also have the same edges.
		
If $u \geq t_i$, then 
\[\varphi (A_{u+1} \times \cdots \times A_{k}) \leq \varphi (A_{{t_i}+1} \times \cdots \times A_{k}) = M_iN / N.\]
Moreover, $T_u(A_{u+1} \times \cdots \times A_{k}) = L_k$ so 
\[G/N = \varphi(T_u(A_{u+1} \times \cdots \times A_{k}))\leq XM_iN/N= XM_i/N\]
and thus $G=XM_i$ and $\G(G/M_i) = \G(XM_i / M_i )$.
		
Now assume $u<t_i$. Set $V:= A_{{t_i}+1} \times \cdots \times A_k$ and note that $\varphi(V) =M_iN/N$ since $i\geq j$. Also note that $L_k/V\cong L_{t_i}$. Now $\varphi :L_k \rightarrow G/N$ induces an isomorphism 
\[\psi: L_k/V \rightarrow G/M_iN.\] 
Note that $T_uV / V \cong T_u$ since $T_u \cap V =1$ and 
\[X /N = \varphi(T_u) \cong \psi(T_uV/V) =XM_i/M_iN.\]
Clearly, $\G(\ol{X}) \subseteq \G(\ol{G})$. For the reverse inclusion,  suppose that $p,q\in \G(\ol{G})$ are connected, so there exists $\ol{g}\in\ol{G}$ such that $|\ol{g}|=pq$. If $\ol{g}\in\ol{ N}\leq\ol{ X}$, then $p$ and $q$ are connected in $\G(\ol{ X})$, and we are done. Therefore, we may assume that $\ol{ g}\in\ol{ G}\setminus \ol{ N}$, so $|\ol{ g}\ol{ N}| \in \{pq,p,q\}$. 

Suppose first that $\ol{ g}\ol{ N}$ has order $pq$, in which case $pq$ divides $|L|$. If $l \in L$ has order $pq$, then $\psi((l,\ldots,l)V)\in XM_i/M_iN$ has order $pq$, hence there is an edge in $\G(\ol{X} / \ol{N})=\G(XM_i/M_iN)$ connecting $p$ and $q$. Now assume $L$ has no elements of order $pq$, so $k>1$.  Consider the preimage of $\ol{ g}\ol{ N}$ in $G/N$. By  applying $\varphi^{-1}$, we obtain $(l_1,\ldots,l_k)\in L_k$ which has order divisible by $pq$, so there exist $l_r, l_s$ such that $p$ divides $|l_r|$ and $q$ divides $|l_s|$. As $|l_rA|=|l_sA|$, it follows that $l_r,l_s\in A$, so the order of $A$ is divisible by two different primes. But  since $A$ has no element of order $pq$, it follows that $A$ is simple and thus $X/N=\varphi(T_2)$ (by definition of $X$). Then the order of $\psi((l_r,l_s,\ldots,l_s)V)\in XM_i / M_iN$ is  divisible by $pq$, and thus $p$ and $q$ are connected in $\G(\ol{X} / \ol{N} )$.
		
Now suppose that $\ol{ g}\ol{ N}$ has order $p$ or $q$. Without loss of generality, we may assume that the order is $p$, so $q$ divides $|\ol{N}|$. We consider two cases.

Suppose that $p\nmid |A|$. By the definition of $X$, every Sylow $p$-subgroup of $X/N$ is a Sylow $p$-subgroup of $G/N$, hence the same holds for $\ol{X}/\ol{N}$. As $\ol{ g}\ol{ N}$ has order $p$, there exists $\ol{h}\in\ol{G}$ such that $(\ol{ g}\ol{ N})^{\ol{h}}\in\ol{X}/\ol{N}$, so $\ol{g}^{\ol{h}}\in \ol{X}$ is an element of order $pq$, and thus $p$ and $q$ are connected in $\Gamma(\ol{X})$. 

Now assume that $|A|$ is divisible by $p$. Let $\ol{Q}\in {\rm Syl}_p(\ol{N})$ and let $\ol{T}=N_{\ol{X}}(\ol{Q})$, hence $\ol{X}=\ol{T}\,\ol{N}$ by the Frattini argument. Let $\ol{P}\in {\rm Syl}_p(\ol{T})$, which acts on $\ol{Q}$ by conjugation. Since \eqref{hyp} holds, a Sylow $p$-subgroup of $\ol{X}/\ol{N}$ is non cyclic, hence $\ol{P}$ is non-cyclic. Moreover, if $p=2$, then $\textrm{Soc}(\ol{X}/\ol{N})$ contains a subgroup isomorphic to an elementary abelian $2$-group $K$ of rank 3, so $\ol{P}$ has a section isomorphic to $K$ and thus $\ol{P}$ is not generalized quaternion. By Lemma \ref{quaternion}, there is a non-trivial element in $\ol{Q}$ which is centralized by some non-trivial element from $\ol{P}$. Hence, there is an element of order $pq$ in $\ol{X}$.
		
We have now shown that $\G(XM_i/M_i)=\G(G/M_i)$ for any $i=1,\ldots,t$. Therefore, by the assumption on $G$, we can conclude that $X=G$ and the proof of Claim 2 is complete.
\end{proof}

%\vspace{2mm}
			
Finally, we will bound $d(G/N)=d(G)=d$ using the fact that $G/N = X/N \cong L_k$. There are $4$ cases to consider:
\begin{itemize}
\item [1.] If $k=1$ or $A$ is not simple, then $G/N=\varphi(T_1)\cong L$ so Proposition \ref{Maind(L_k)}(i) yields
\[d(G/N)=d(L) = \max\{2,d(L/A)\}\leq\max\{2,d-1\}\]
and thus $d=2$.
\item [2.] If $A$ is abelian and $|A|>2$, then $k\leq2$. Let  $q,r,s,\theta$ be as in Proposition \ref{Maind(L_k)}(ii). Since $d(L/A)\leq d-1$, it follows that
\[d=d(G/N)=\theta+\lceil(k+s)/r\rceil\leq1+\lceil(2+s)/r\rceil\]
so we have $d\leq3$ since $s<r$ (this follows from an important result of Aschbacher and Guralnick, see \cite[Theorem A]{Coho}).
\item [3.] If $|A|=2$, then $k\leq3$ and we deduce that $d\leq3$ as in the previous case (note that $\theta=0$).
\item [4.] In the remaining cases, $A$ is a non-abelian simple group and $k=2$. By Lemma \ref{L_k3-gen} and the definition of $X$, $G/N\cong L_2$ is $3$-generated, so $d\leq3$.
\end{itemize} 

This completes the proof of Claim 1.
\end{proof}

In view of our earlier comments (see the paragraph following Claim 1), this completes the proof of Proposition \ref{p.g.gen}.
\end{proof}

The next result provides a useful characterization of the prime graph of a profinite group. Note that, given two (or more) graphs $\G_1$ and $\G_2$, we define $\G = \G_1 \cup \G_2$ to be  the  graph whose set of vertices is the union of the vertices of $\G_1$ and $\G_2$, and two vertices $x$ and $y$ are adjacent in $\G$ if and only if they are adjacent in $\G_1$ or $\G_2$. 

\begin{prop}\label{caracp.g.}
Let $G$ be a profinite group. Then there exists a countable set $\mathcal{N}$ of open normal subgroups of $G$ such that 
\[\G(G)=\bigcup_{N\in\mathcal{N}}\G(G/N).\]
\end{prop}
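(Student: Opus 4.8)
The plan is to reduce the proposition to a single finitary statement about the finite quotients of $G$, realized separately for each vertex and each edge of $\Gamma(G)$. Since $\pi(G)$ is a set of primes it is countable, so $\Gamma(G)$ has only countably many vertices and edges; hence it suffices to prove the following:

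$(\ast)$ \emph{For every $p\in\pi(G)$ there is an open normal subgroup $N$ of $G$ with $p\mid |G/N|$ and $\Gamma(G/N)\subseteq\Gamma(G)$, and for every edge $\{p,q\}$ of $\Gamma(G)$ there is an open normal subgroup $N$ of $G$ such that $G/N$ has an element of order $pq$ and $\Gamma(G/N)\subseteq\Gamma(G)$.}

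Granting $(\ast)$, let $\mathcal{N}$ consist of one such $N$ for each vertex and for each edge of $\Gamma(G)$. Then $\mathcal{N}$ is countable; since $\pi(G/N)\subseteq\pi(G)$ and $\Gamma(G/N)\subseteq\Gamma(G)$ for every $N\in\mathcal{N}$, the vertex set and edge set of $\bigcup_{N\in\mathcal{N}}\Gamma(G/N)$ are contained in those of $\Gamma(G)$; and by construction every vertex and every edge of $\Gamma(G)$ occurs in some $\Gamma(G/N)$ with $N\in\mathcal{N}$. Hence $\Gamma(G)=\bigcup_{N\in\mathcal{N}}\Gamma(G/N)$, as wanted.

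The real content is $(\ast)$, and the subtlety is that $\Gamma(G/N)\subseteq\Gamma(G)$ can fail for \emph{small} $N$: a finite quotient may acquire ``new'' elements of composite order (for instance $\widehat{\mathbb{Z}}$ is torsion-free yet has $\mathbb{Z}/6$ as a quotient), so $N$ must be taken large, not small. For an edge $\{p,q\}$ I would fix $g\in G$ with $|g|=pq$, set $C=\overline{\langle g\rangle}\cong\mathbb{Z}/pq$, and choose $N$ maximal — via Zorn's lemma, using that an ascending union of open normal subgroups is again open and normal and keeps trivial intersection with the finite group $C$ — among the open normal subgroups of $G$ with $N\cap C=1$. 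For a vertex $p$ I would first dispose of the case where $G$ has a nontrivial pro-$p$ quotient (take $G/N$ a nontrivial $p$-group, so that $\Gamma(G/N)$ is a single vertex with no edges and thus trivially contained in $\Gamma(G)$), and otherwise take $N$ maximal among the open normal subgroups with $p\mid |G/N|$ (the family of such $N$ is closed under passing to smaller open normal subgroups, and an ascending chain stabilizes inside a fixed finite quotient, so Zorn applies). In each case maximality forces $\overline G:=G/N$ to be structurally rigid: every nontrivial normal subgroup of $\overline G$ meets the image $\overline C$ of $C$ (so all minimal normal subgroups of $\overline G$ lie inside $\overline C\cong\mathbb{Z}/pq$, and $\operatorname{Soc}(\overline G)$ is very restricted), respectively $\overline G$ is monolithic with $\operatorname{Soc}(\overline G)$ of order divisible by $p$ but $p\nmid |\overline G/\operatorname{Soc}(\overline G)|$. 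One then reads off the possible isomorphism types of $\overline G$ (here the description of monolithic groups together with Schur--Zassenhaus is useful) and verifies $\Gamma(\overline G)\subseteq\Gamma(G)$ directly: given an element of $\overline G$ of composite order $rs$, it spans a copy of $\mathbb{Z}/rs$ which — using the structure of $\overline G$ and the maximality of $N$ — lifts to a \emph{finite} cyclic subgroup of $G$ of order divisible by $rs$, so $G$ has an element of order $rs$ and $\{r,s\}\in E(\Gamma(G))$.

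I expect the main obstacle to be exactly this lifting step. The danger is that a lift to $G$ of a torsion element of $\overline G$ generates an \emph{infinite} procyclic subgroup (with a $\mathbb{Z}_\ell$-factor for some prime $\ell$), producing no element of $G$ of finite order $rs$ at all. The point of the maximality of $N$ is precisely to exclude this: were such a lift to fail, one could enlarge $N$ while still realizing the chosen vertex or edge, contradicting the choice of $N$. Making this enlargement argument precise — controlling how a failure to lift interacts with the normal subgroup structure of $G/N$ — is the technical heart of the proof.
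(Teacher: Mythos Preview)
Your overall plan (one open normal subgroup per vertex and per edge of $\Gamma(G)$, giving a countable family) is exactly what the paper does. The divergence is in the step you flag as the ``real content'': you believe $\Gamma(G/N)\subseteq\Gamma(G)$ can fail, and you build an elaborate maximal-$N$/lifting scheme to force it. This is based on a misreading of the definition. In the profinite setting the order of $g$ is the supernatural number $|g|=|\overline{\langle g\rangle}|$, and adjacency of $p$ and $q$ in $\Gamma(G)$ means that \emph{$pq$ divides $|g|$ for some $g\in G$}, not that $G$ contains a torsion element of exact order $pq$. (For finite groups the two formulations coincide, which is why the introduction phrases it the second way; the proof of this very proposition in the paper makes the intended profinite reading explicit.) With the correct reading, your $\widehat{\mathbb{Z}}$ example evaporates: a topological generator $g$ of $\widehat{\mathbb{Z}}$ has $|g|=\prod_p p^{\infty}$, so $6\mid|g|$ and $\{2,3\}$ is already an edge of $\Gamma(\widehat{\mathbb{Z}})$.

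More to the point, under the correct definition the inclusion $\Gamma(G/N)\subseteq\Gamma(G)$ is automatic for \emph{every} open normal $N$: if $gN$ has order divisible by $pq$ in the finite group $G/N$, then $pq\mid |gN|=|\overline{\langle g\rangle}N/N|=|\overline{\langle g\rangle}:\overline{\langle g\rangle}\cap N|$, and the latter index divides $|\overline{\langle g\rangle}|=|g|$. So the paper simply picks, for each $p\in\pi(G)$, any $N_p\unlhd_o G$ with $p\mid|G:N_p|$, and for each edge $\{p,q\}$ any $N_{pq}\unlhd_o G$ with $pq\mid|\langle g\rangle N_{pq}/N_{pq}|$ for a witnessing $g$; countability is immediate and both inclusions are trivial. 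No Zorn's lemma, no maximal $N$, no structural analysis of $G/N$, and no lifting of torsion elements is needed. The ``technical heart'' you anticipate is not there; once the definition is straightened out, $(\ast)$ reduces to the existence of \emph{some} open normal subgroup realizing a given vertex or edge, which is immediate from the definition of profinite order.
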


\begin{proof}
If $p\in\G(G)$ then $p$ divides $|G|$, which we recall is defined by  
\[|G|= \textrm{lcm}\{|G:N| \mid N\unlhd_o G \}.\]
Therefore, choose an open normal subgroup $N_p$ of $G$ such that $p$ divides $|G:N_p|$, so $p\in\G(G/N_p)$. Suppose $p,q\in\G(G)$ are connected, so there is an element $g\in G$ such that $pq$ divides $|g| = |\overline{\la g\ra}|$. Hence, choose  $N_{pq}\unlhd_o G$ such that $pq$ divides $|\la g\ra N_{pq}/N_{pq}|$, and thus $p$ and $q$ are connected in $\G(G/N_{pq})$.  Let $\mathcal{N}$ be the set of all such chosen open normal subgroups of $G$; since we have a countable number of primes dividing the order of $G$, the set $\mathcal{N}$ is countable.

By this construction, $\G(G) \subseteq \bigcup_{N\in\mathcal{N}}\G(G/N)$. The reverse inclusion also holds since $\G(G / N) \subseteq \G(G)$ for every open normal subgroup $N$ of $G$.
\end{proof}
		
We are now in a position to prove Theorem A.

%%%%%THEOREM A

\begin{proof}[Proof of Theorem A]
Let $\mathcal{N}$ be the set of open normal subgroups of $G$ given by Proposition \ref{caracp.g.} and set $M=\bigcap_{N\in\mathcal{N}}N$, so $G/M$ is a profinite group. The family $\{N/M\}_{N\in\mathcal{N}}$ is a countable basis of open subgroups of $G/M$. By taking appropriate intersections between members of this family, we can choose a basis of open subgroups of $G/M$ which are totally ordered with respect to inclusion. For this reason, we may assume that $G$ is countably based, which means that $G$ has a chain 
\begin{equation}\label{e:chain}
G=M_0 \ge  M_1\ge M_2 \ge \cdots 
\end{equation}
of open normal subgroups comprising a base of open neighbourhoods of $1$ (see \cite[Proposition 4.1.3]{Wilson}). For every open normal subgroup $M_i$ in the chain, let us define
\[\Omega_i=\{(x_1,x_2,x_3)\in G^3 \mid \G(\langle x_1,x_2,x_3 \rangle M_i/M_i)=\G(G/M_i)\}.\]
		
Note that if $(x_1,x_2,x_3)\in\Omega_i$ then $x_1M_i\times x_2M_i\times x_3M_i\subseteq\Omega_i$. In fact, $\Omega_i$ is the union of finitely many subsets of this form. Since each $M_i$ is closed in $G$, it follows that $x_1M_i\times x_2M_i\times x_3M_i$ is closed in $G\times G\times G$, so $\Omega_i$ is also closed.

Choose a finite subchain $M_{i_1} \leq \cdots \leq M_{i_r}$ of \eqref{e:chain} and consider the series 
\[M_{i_1}/M_{i_1} \leq M_{i_2}/M_{i_1}\leq\cdots\leq M_{i_r}/M_{i_1}\leq G/M_{i_1},\]
which is a normal series for $G/M_{i_1}$. Since $G/M_{i_1}$ is finite, Proposition \ref{p.g.gen} implies that there exists $(y_1,y_2,y_3) \in G^3$ such that $\G(\langle y_1,y_2,y_3 \rangle M_{i_1}/M_{i_1}) = \G(G/M_{i_1})$ and for any $M_{i_j}/M_{i_1}\unlhd G/M_{i_1}$ we have 
\[\G(\langle y_1,y_2,y_3 \rangle M_{i_j}/M_{i_j})=\G(G/M_{i_j})\]
with $j=2,\ldots,r$. This means that $(y_1,y_2,y_3)\in\bigcap_{j=1}^{r}\Omega_{i_j}$, so every finite subcollection of the $\Omega_i$ has non-empty intersection. Since $G$ is compact, the whole family has non-empty intersection, which implies that there exists $(x_1,x_2,x_3)\in G^3$ such that 
\[\G(\langle x_1,x_2,x_3 \rangle M_i/M_i) = \G(G/M_i)\] 
for all $M_i$ in \eqref{e:chain}.

Let $C = \ol{\langle x_1,x_2,x_3 \rangle}$. We want to show that $\G(C)=\G(G)$. Clearly, $\G(C)\subseteq \G(G)$. If $p\in\G(G)$ then there exists $N_p\in\mathcal{N}$ such that $p$ divides $|G / N_p|$, so there exists a subgroup $M$ in the chain \eqref{e:chain} such that $M\leq N_p$. Then 
\[ p\in\G(G/N_p)\subseteq\G(G/M)  =\G(\langle x_1,x_2,x_3 \rangle M/M) 
				 = \G(CM/M)=\G(C/C\cap M)\subseteq\G(C).\]
		
We can use the same argument to prove that $\G(G)$ and $\G(C)$ have the same edges. Indeed, suppose $p,q\in \G(G)$ are connected. Then there exists $N_{pq}\in \mathcal{N}$ such that $p$ and $q$ are connected in $\G( G / N_{pq})$, and there exists a subgroup $M'$ in \eqref{e:chain} such that $M'\leq N_{pq}$. As before, we get
\[\G(G/N_{pq})\subseteq\G(G/M')  =\G(\langle x_1,x_2,x_3 \rangle M'/M')
		 = \G(CM'/M')=\G(C/C\cap M')\subseteq\G(C).\]
Therefore $p$ and $q$ are connected in $\G(C)$, and we conclude that $\G(C)=\G(G)$.
\end{proof}

\section{Proof of Theorem B}\label{s:B}

In this section we turn our attention to Theorem B. We start by recording a couple of preliminary results.

Let $X$ be a subset of a finite group $G$, and let $d_X(G)$ be the minimal integer $d$ such that $G=\la X, g_1,\ldots,g_d\ra$ for some $g_1,\ldots,g_d\in G$. The following lemma (see \cite[Lemma 5]{Prdiv}), bounds $d_Y(L)$ for a monolithic group $L$ and $Y\subseteq L$.
 
\begin{lem}\label{maind(L)Y}
Let $L$ be a monolithic group with socle $A$, and let $Y \subseteq L$. Then 
\[d_Y(L) \leq \max\{2, d_{AY}(L)\}.\]
\end{lem}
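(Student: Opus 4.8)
The plan is to follow the strategy that establishes the plain (unrelativised) bound $d(L) \le \max\{2, d(L/A)\}$ for a monolithic group $L$ with socle $A$ — namely Proposition \ref{Maind(L_k)}(i) with $k=1$ — and to make every step relative to the subset $Y$. Write $d = d_{AY}(L)$, so that $L = \langle AY, g_1, \ldots, g_d\rangle$ for suitable $g_i \in L$; our goal is to produce a generating set for $L$ of the form $Y \cup \{h_1, \ldots, h_m\}$ with $m \le \max\{2, d\}$. If $d \le 1$ the claim is essentially immediate (one extra generator, modulo the usual care when $L/A$ is cyclic, where one might still need a second generator to hit $A$), so the substance is the case $d \ge 2$, where we must show $d_Y(L) \le d$.

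First I would pass to the quotient: since $\langle AY, g_1, \ldots, g_d\rangle = L$, reducing mod $A$ gives $\langle \bar{Y}, \bar{g_1}, \ldots, \bar{g_d}\rangle = L/A$, and conversely any lift of a generating set of $L/A$ containing $\bar Y$, together with enough elements of $A$, generates $L$. The key point is that $A = \Soc(L)$ is the unique minimal normal subgroup, so a subgroup $H \le L$ with $HA = L$ satisfies $H = L$ unless $H \cap A = 1$, i.e. unless $A$ has a complement; and in the non-abelian (or non-complemented) case the Frattini-type argument forces $H=L$ outright, while in the complemented abelian case one uses that $A$, as an irreducible $L/A$-module, is generated as a normal subgroup by a single element not lying in any complement. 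The relativised statement then follows by the same bookkeeping used in the $k=1$ case of Proposition \ref{Maind(L_k)}: a generating set of $L/A$ of size $\le \max\{2, d_{AY}(L)\}$ containing the image of $Y$ can be lifted to a generating set of $L$ of the same size containing $Y$, adjusting at most one lift so that the corresponding normal closure covers $A$. One small wrinkle: we must ensure the relativised generator number of $L/A$ is itself $\le \max\{2, d_{AY}(L)\}$; but $d_{\bar Y}(L/A) \le d_{AY}(L)$ trivially (any set that together with $AY$ generates $L$ maps onto a set that together with $\bar Y$ generates $L/A$), so this is automatic.

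The main obstacle, as in the unrelativised case, is the abelian complemented socle: there $HA = L$ genuinely does not give $H = L$, and one has to argue that among the $|A|$-many lifts of a would-be final generator, at least one produces a normal closure containing $A$ — equivalently, that the complements of $A$ do not cover all the lifts. This is the content of the cohomological counting (the $s < r$ inequality coming from \cite[Theorem A]{Coho}, or more elementarily the irreducibility of $A$) and is precisely why the bound is $\max\{2, \cdot\}$ rather than just $d_{AY}(L)$: when $d_{AY}(L) = 1$ one may be forced up to $2$. I would quote Lemma \ref{maind(L)Y}'s source \cite[Lemma 5]{Prdiv} for this counting step rather than redo it, since the only new feature relative to Proposition \ref{Maind(L_k)}(i) is carrying the fixed subset $Y$ along, which does not interact with the cohomology at all — $Y$ simply sits inside every generating set under consideration and contributes nothing to, and subtracts nothing from, the module-theoretic obstruction. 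Hence the proof reduces to: (1) observe $d_{\bar Y}(L/A) \le d_{AY}(L)$; (2) lift a minimal $\bar Y$-relative generating set of $L/A$; (3) invoke the monolithic structure of $L$ (Proposition \ref{socN} with $k=1$, plus the cohomological count) to fix up one lift so that $A$ is covered, at the cost of at most one extra generator beyond $d_{AY}(L)$ and never more than $2$ in total.
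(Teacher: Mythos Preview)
The paper does not prove this lemma at all: its entire proof is the single sentence ``This is \cite[Lemma 5]{Prdiv}.'' Your sketch is compatible with that citation --- you even point to the same source for the counting step --- and your outline (reduce to $L/A$ via $d_{\bar Y}(L/A)\le d_{AY}(L)$, lift, then use monolithicity plus the complement-counting argument to cover $A$) is exactly the shape of the argument behind the unrelativised bound in Proposition~\ref{Maind(L_k)}(i), so there is nothing to correct.
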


The next result can be viewed as a generalization of Theorem \ref{G&L}. Here, if $U$ is a subgroup of $L$, then $\diag(U)=\{(u,\ldots,u) \mid u\in U\} \le L_k$. 

\begin{prop}\label{G&L&X}
Let $X$ be a subgroup of a finite group $G$ and let $N$ be a normal subgroup of $G$ such that $N$ is maximal with the property that $\displaystyle{d_{XN}(G) = d_X(G)}$. Then there exists a monolithic group $L$ and an isomorphism $\varphi : G/N \rightarrow L_k$ such that 
$\varphi(X) \leq \diag(L)$. Moreover, if $A$ is abelian and $T$ is a complement of $A$ in $L$, then $\varphi(X)\leq \diag(T)$.
\end{prop}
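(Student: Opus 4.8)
The plan is to mirror the proof strategy of Theorem \ref{G&L}, but carry the subgroup $X$ along at every step. First I would reduce to the minimal number of generators relative to $X$: set $d = d_X(G)$ and consider $G/N$, where $N$ is maximal among normal subgroups with $d_{XN}(G) = d_X(G)$. By the maximality of $N$, for every normal subgroup $M$ of $G$ with $M > N$ we have $d_{XM}(G) = d_{(X N/N)(M/N)}(G/N) < d$, i.e. every proper quotient of $G/N$ can be generated by fewer than $d$ elements together with the image of $X$. Working in $\ol{G} := G/N$ with $\ol{X} := XN/N$, the hypothesis becomes: $d_{\ol{X}}(\ol{G}) = d$, but $d_{\ol{X}}(\ol{G}/R) < d$ for every nontrivial normal subgroup $R$ of $\ol{G}$.

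Next I would extract a crown-based power structure. The argument from \cite{L_k} (Theorem \ref{G&L}) shows that a group all of whose proper quotients need strictly fewer generators than the group itself is forced to be a crown-based power $L_k$ of a monolithic group $L$ with nonabelian or complemented socle; the point is that the relative version goes through with essentially the same bookkeeping, since adding a fixed subgroup $\ol{X}$ to every generating set only shifts all the relevant generator counts by a bounded amount and does not affect which quotients are ``smaller''. So fix an isomorphism $\varphi : \ol{G} \to L_k$. It remains to arrange that $\varphi(\ol{X}) \leq \diag(L)$, and, in the abelian-socle case, that $\varphi(\ol{X}) \leq \diag(T)$ for a chosen complement $T$ of $A$ in $L$.

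For the diagonal containment I would argue as follows. Write $\Soc(L_k) = A_1 \times \cdots \times A_k$ with each $A_r \cong A$, and let $\rho_r : L_k \to L$ be the $r$-th coordinate projection (well-defined on $L_k$ since $l_1 \equiv \cdots \equiv l_k \bmod A$). The subgroup $\varphi(\ol{X})$ projects into $L$ under each $\rho_r$; the claim is that these $k$ projections can be made to agree after composing $\varphi$ with a suitable automorphism of $L_k$. The relevant automorphisms are the "diagonal" ones coming from $\aut(L)$ acting simultaneously on all coordinates, together with the inner automorphisms of $L_k$ induced by $\Soc(L_k) \rtimes \diag(L)$; using these one adjusts coordinate-by-coordinate so that $\rho_1 \circ \varphi|_{\ol{X}} = \rho_r \circ \varphi|_{\ol{X}}$ for all $r$, which is exactly $\varphi(\ol{X}) \leq \diag(L)$. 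Here is where I would invoke Lemma \ref{maind(L)Y}: the bound $d_Y(L) \leq \max\{2, d_{AY}(L)\}$ is the tool that lets one show the "excess" generators needed over $\ol{X}$ is concentrated in $\Soc(L_k)$, which is what makes the coordinate-matching possible without increasing $d_{\ol{X}}$. Finally, in the abelian case $A$ is complemented by hypothesis (or by Theorem \ref{G&L}), and $\diag(T)$ is a complement for $A^k$ in $L_k$; since $\varphi(\ol{X}) \leq \diag(L) = \diag(A) \diag(T)$, one further conjugates by an element of $\diag(A)$ (equivalently absorbs the $A$-part, which costs nothing because $A$ is abelian) to push $\varphi(\ol{X})$ into $\diag(T)$.

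The main obstacle, I expect, is the coordinate-matching step: one must verify that the automorphism of $L_k$ used to diagonalize $\varphi(\ol{X})$ exists and does not destroy the property $d_{\varphi(\ol{X})}(L_k) = d$ — in other words, that the ambiguity in the isomorphism $\ol{G} \cong L_k$ is exactly rich enough to place $\ol{X}$ on the diagonal. This is where the relative generation bounds (Lemma \ref{maind(L)Y} and the generator count for $L_k$ behind Theorem \ref{G&L}) have to be combined carefully, tracking how $X$ interacts with $\Soc(L_k)$; the abelian refinement to $\diag(T)$ is then a comparatively short additional argument using complementation.
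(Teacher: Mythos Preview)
The paper does not actually prove this proposition: its entire proof is the sentence ``This is a consequence of Proposition 12, Theorem 20 and Corollary 21 of \cite{X-dic}.'' So there is no argument in the paper to compare against; the real work is deferred to Lucchini's $X$-Dirichlet paper, where the relative (i.e.\ $d_X$-) theory of crowns is developed.

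Your outline has the right shape --- identify $\ol{G}$ with a crown-based power and then place $\ol{X}$ on the diagonal --- but the two key steps are not carried out, and parts of what you write would not work as stated. First, ``the relative version of Theorem~\ref{G&L} goes through with essentially the same bookkeeping'' is an assertion, not an argument; the proof in \cite{L_k} is structural, and one must check that replacing $d(\cdot)$ by $d_X(\cdot)$ preserves each step. Second, the coordinate-matching step is the heart of the matter and you do not establish it: for a general subgroup $Y\leq L_k$ whose coordinate projections agree modulo $A$ there is no reason $Y$ can be moved into $\diag(L)$ by automorphisms of $L_k$, so the hypothesis $d_{\varphi(\ol{X})}(L_k)=d$ with all proper quotients strictly smaller must be used in an essential way --- and you do not say how. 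Your appeal to Lemma~\ref{maind(L)Y} is misplaced here: that lemma bounds $d_Y(L)$ for a single monolithic $L$ and says nothing about moving a subgroup of $L_k$ into $\diag(L)$; in the paper it is used later, in the proof of Proposition~\ref{in mezzo}, not for this proposition. Finally, your abelian refinement (``conjugate by an element of $\diag(A)$ to push $\varphi(\ol{X})$ into $\diag(T)$'') fails as written: since $A$ is abelian, conjugation by $\diag(A)$ acts trivially on $\diag(A)$, so it cannot strip an $A$-component from $\varphi(\ol{X})$; one needs instead to know that $\varphi(\ol{X})$ already lies in \emph{some} complement of $\diag(A)$ in $\diag(L)$, which again requires the minimality hypothesis on $N$ and is part of what \cite{X-dic} proves.
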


\begin{proof}
This is a consequence of Proposition 12, Theorem 20 and Corollary 21 of \cite{X-dic}.
\end{proof}

The main motivation for Theorem B is a result of Detomi and Lucchini \cite[Theorem 1.5]{Prob}, which states that if $G$ is a finite group and $X,C$ are two subgroups of $G$ such that $X\leq C$, then it is possible to find two elements $a,b\in G$ such that 
\[\pi(|G:C|) \subseteq \pi(|\la a,b,X \ra : C \cap \la a,b,X \ra|),\] 
where we recall that $\pi(n)$ is the set of primes divisors of the integer $n$. In a similar spirit to the proof of Theorem A, we will prove a generalization of this result (see Proposition \ref{pdfinitoGEN}) which can be applied in the profinite case to establish Theorem B. In order to do this, we require some additional results.

\begin{lem}\label{lemmabasedivisors}
Let $G$ be a finite group and let 
\[1=M_n \le \cdots \le M_i\le \cdots \le M_0=G\] 
be a chief series of $G$. Let $X$ and $C$ be two subgroups of $G$ such that $X\leq C$. For any index $i$, let $\displaystyle{\mathcal{Q}_i \subseteq \pi(|G : CM_i|)}$ and assume that $G$ has no proper subgroup $H$ containing $X$ such that $\mathcal{Q}_i \subseteq \pi(|HM_i : HM_i \cap CM_i|)$. Assume also that there exists an epimorphism $\displaystyle{\varphi : G \rightarrow L_t}$, where $L$ is a monolithic group, $t$ is a positive integer and $\displaystyle{\varphi(X)\leq {\rm diag}(L)}$. Moreover, if the socle $A$ of $L$ is abelian and $T$ is a complement of $A$ in $L$, then assume that $\varphi(X)\leq {\rm diag}(T)$. Then the following hold:
\begin{itemize}
\item[(i)] If $A$ is abelian, then $t=1$;
\item[(ii)] If $A$ is non-abelian, then $t\leq\beta+1$, where $\beta = |\mathcal{Q} \cap \pi(A)|$ and $\mathcal{Q}=\bigcup_{i=1}^{n}\mathcal{Q}_i$.
\end{itemize}
\end{lem}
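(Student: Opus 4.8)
\textbf{Proof plan for Lemma \ref{lemmabasedivisors}.}

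The plan is to argue by contradiction from the hypothesis that $G$ has no proper subgroup $H\geq X$ detecting the sets $\mathcal{Q}_i$. The overall strategy mirrors the proof of Claim 1 in Proposition \ref{p.g.gen}, but using the ``relative'' machinery for $d_X$: the epimorphism $\varphi\colon G\to L_t$ with $\varphi(X)\leq\diag(L)$ (coming from Proposition \ref{G&L&X}) is given to us, and our job is to bound $t$. Writing $K=\ker\varphi$, I would first pull the chief series $1=M_n\leq\cdots\leq M_0=G$ through $\varphi$ and refine it; by Proposition \ref{socN}, $\Soc(L_t)=A^t$ (with $A=\Soc(L)$) sits between two consecutive terms after composing with $\varphi$, say there are indices $h<j$ with $\varphi(M_h)=\Soc(L_t)$, $L_t/\varphi(M_j)\cong L$, and $\varphi(M_h)/\varphi(M_j)\cong A$. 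As in Proposition \ref{p.g.gen}, for $i\geq j$ we have $\varphi(M_i)\cong A^{u_i}$ and $G/M_iK\cong L_{t_i}$ with $t_i+u_i=t$, while for $i<j$ the quotient $G/M_iK$ is a quotient of $L$.

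For part (i), suppose $A$ is abelian and $t\geq 2$, aiming for a contradiction. I would exhibit a \emph{proper} subgroup $H$ of $G$ containing $X$ with $\mathcal{Q}_i\subseteq\pi(|HM_i:HM_i\cap CM_i|)$ for all $i$. Take $T$ a complement of $A$ in $L$, so $\varphi(X)\leq\diag(T)$; set $H/K=\varphi^{-1}(\diag(T)\cdot(A_2\times\cdots\times A_t))$ where $\Soc(L_t)=A_1\times\cdots\times A_t$ — i.e.\ ``collapse all but one socle factor'', giving $H/K\cong L_1=L$, a proper subgroup since $t\geq2$, and it contains $X$ because $\varphi(X)\leq\diag(T)\leq H/K$. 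One then checks the divisor condition term by term: for $i<j$ one gets $G=HM_iK$ (since $\diag(T)$ together with the full socle generates $L_t$, and all but one socle factor already lies in $H/K$ while the remaining one maps into $M_iK/K$ once $i$ is large enough — for small $i$ one argues directly that $\Soc(L_t)\leq M_iK/K$), so the indices agree outright; for $i\geq j$, since abelian socle factors are central modulo... more precisely since $H$ still surjects onto $G/M_iK\cong L_{t_i}$ for $t_i\leq 1$, or onto $L$ when $t_i=1$, one verifies $|HM_i:HM_i\cap CM_i|$ and $|G:CM_i|$ have the same prime divisors using that $H$ covers the relevant section. The point is that for abelian $A$, one socle copy suffices to recover all the prime-divisor data, forcing $t=1$.

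For part (ii), $A$ is non-abelian and I want $t\leq\beta+1$ where $\beta=|\mathcal{Q}\cap\pi(A)|$. Assume $t\geq\beta+2$. The idea is that each socle factor $A_r$ ``carries'' at most the primes in $\pi(A)$, and to cover a prime $p\in\mathcal{Q}_i$ we only need that $p$-part of $A$ to survive in $H$; so $\beta$ diagonal-ish copies plus one extra (to handle primes outside $A$, via the $L$-quotient and a Frattini-argument step analogous to the use of Lemma \ref{quaternion} in Proposition \ref{p.g.gen}) should be enough. Concretely I would fix, for each prime $p\in\mathcal{Q}\cap\pi(A)$, one socle factor index, and let $H/K$ be generated by $\diag(L)$ together with those chosen $A_r$ and one further $A_{r_0}$; then $H/K\cong L_{u}$ with $u\leq\beta+1<t$, so $H<G$, and $\varphi(X)\leq\diag(L)\leq H/K$. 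Verifying $\mathcal{Q}_i\subseteq\pi(|HM_i:HM_i\cap CM_i|)$ splits into the same two cases as before, now using that for $p\in\mathcal{Q}_i\cap\pi(A)$ the chosen surviving factor $A_r$ gives an element (or coset section) of order divisible by $p$ in $HM_i/(HM_i\cap CM_i)$'s relevant quotient, and for $p\in\mathcal{Q}_i\setminus\pi(A)$ a Sylow/conjugacy argument (as in Case 2 of Claim 2) transports a witnessing element into $H$. This contradicts the no-proper-subgroup hypothesis, so $t\leq\beta+1$.

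The main obstacle I anticipate is the bookkeeping in the divisor inequalities $\mathcal{Q}_i\subseteq\pi(|HM_i:HM_i\cap CM_i|)$: unlike the prime-graph situation, here one must track how $C$ interacts with the socle factors, i.e.\ control $HM_i\cap CM_i$ versus $CM_i$ under the projection to $L_{t_i}$, and ensure that collapsing socle factors does not accidentally \emph{increase} the intersection (which would shrink the index and lose a prime). This is where the precise choice of which $A_r$ to keep — guided by which primes of $\pi(A)$ actually appear in some $\mathcal{Q}_i$, together with the constraint $\varphi(X)\leq\diag(L)$ (resp.\ $\diag(T)$) ensuring $X\leq H$ — has to be made carefully, and where the hypotheses of the lemma are genuinely used.
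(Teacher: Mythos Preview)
Your overall strategy matches the paper's: exhibit a subgroup $H$ with $X\leq H$, built from $\diag(L)$ (or $\diag(T)$) together with a bounded number of socle factors, verify that $\mathcal{Q}_i\subseteq\pi(|HM_i:HM_i\cap CM_i|)$ for all $i$, and conclude $H=G$ from the hypothesis. But the key step --- choosing \emph{which} socle factors to keep --- has a genuine gap.

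In the prime-graph argument you are imitating, one only needs certain \emph{elements} to exist in $HM_i/M_i$; no subgroup $C$ is involved, so any surviving copy of $A$ carrying a $p$-element will do. Here the target is an \emph{index} relative to $CM_i$. Keeping a factor $A_r$ only helps for the prime $p$ if that particular factor has a $p$-deficiency against $C$, i.e.\ if $|\overline{A_r}\cap\overline{CM_i}|_p<|\overline{A_r}|_p$ (bars denoting images in $G/\ker\varphi$). If the kept factors happen to have their Sylow $p$-subgroups contained in $\overline{CM_i}$, then a Sylow $p$-subgroup of $HM_i$ may lie inside $CM_i$ and $p$ will \emph{not} divide $|HM_i:HM_i\cap CM_i|$. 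Your rule ``fix, for each $p\in\mathcal{Q}\cap\pi(A)$, one socle factor index'' makes no reference to $C$, so it cannot guarantee this; the analogy with the Sylow/conjugacy step in Claim~2 does not transfer, because there the Sylow subgroup lands in $H$ after conjugation, whereas here you need it to land \emph{outside} $CM_i$.

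The paper's fix is precisely what your final paragraph gestures at but does not supply. Writing $\overline{M_h}=\Soc(L_t)=\overline{S_1}\times\cdots\times\overline{S_t}$, one partitions $\mathcal{Q}_i$ into
\[
\mathcal{P}_i=\{p\in\mathcal{Q}_i:\,p\mid|\overline{G}:\overline{CM_i}|,\ |\overline{M_h}\cap\overline{CM_i}|_p<|\overline{M_h}|_p\},\quad
\mathcal{P}_i^*=\{p\in\mathcal{Q}_i:\,p\mid|\overline{G}:\overline{CM_i}|,\ |\overline{M_h}\cap\overline{CM_i}|_p=|\overline{M_h}|_p\},
\]
and the remainder (primes not surviving in $\overline{G}$). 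For each $p\in\bigcup_i\mathcal{P}_i$ one chooses an index $j_p$ with $|\overline{S_{j_p}}\cap\overline{CM_i}|_p<|\overline{S_{j_p}}|_p$, sets $\Lambda=\{j_p\}$, and takes $\overline{H}=\overline{K}\prod_{l\in\Lambda}\overline{S_l}$ with $\overline{K}=\diag(L)$ or $\diag(T)$. Then $|\Lambda|\leq\beta$, the $\mathcal{P}_i$-primes are detected by construction, the $\mathcal{P}_i^*$-primes are detected because $HM_h=G$ already forces $p\mid|HM_i:HM_i\cap CM_i|$, and the remaining primes are detected via the kernel $N=\ker\varphi\leq H$. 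For abelian $A$ this gives $|\Lambda|\leq1$, hence $\overline{H}\cong T$ or $L$, and $H=G$ forces $t=1$.

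A minor point: in your part~(i), $\diag(T)\cdot(A_2\times\cdots\times A_t)\cong L_{t-1}$, not $L_1$.
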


\begin{proof}
Let $N=\ker(\varphi)$ and set $\overline{G} = G/N\cong L_t$. For the remainder of the proof, we will use the ``bar convention" to indicate the image of a subgroup of $G$ in $\overline{G}$.
		
Define $K \le G$ so that 
\[\overline{K}=
\begin{cases}
\textrm{diag}(L)	& \textrm{if }  A \textrm{ is non-abelian}\\
\textrm{diag}(T)	& \textrm{if }  A \textrm{ is abelian}
\end{cases}\]
In particular, the hypothesis $\varphi(X)\leq \textrm{diag}(L)$ implies that $X\leq K$.
		
The series
\[N / N \leq M_nN/N\leq\cdots\leq M_iN/N\leq\cdots\leq G/N\cong L_t\]
is a chief series for $G/N$. By Proposition \ref{socN}, there exists $0\leq h \leq n$ such that 
\[\overline{M_h}=M_hN/N=\Soc(\overline{G})=\overline{S_1}\times\cdots\times\overline{S_t},\] where each $\overline{S}_i$ is a minimal normal subgroup of $L_t$ isomorphic to $A$.
 		
Recall that if $p$ is a prime, $|G|_p$ denotes the order of a Sylow $p$-subgroup of $G$. For any index $i$, we define
\[\mathcal{P}_i = \{p\in\mathcal{Q}_i \mid p \textrm{ divides } |\overline{G}:\overline{CM_i}| \textrm{ and } |\overline{M_h}\cap \overline{CM_i}|_p<|\overline{M_h}|_p \}	\]
and
\[\mathcal{P}^*_i = \{p\in\mathcal{Q}_i \mid p \textrm{ divides } |\overline{G}:\overline{CM_i}| \textrm{ and } |\overline{M_h}\cap \overline{CM_i}|_p=|\overline{M_h}|_p \}.\]	
Note that $\mathcal{P}_i \subseteq \pi(A)$ for any $i$, hence $\bigcup_{i=1}^n\mathcal{P}_i\subseteq\pi(A)$. If $p\in\mathcal{P}_i$ then there exists $1\leq j_p \leq t$ such that $|\overline{S_{j_p}} \cap \overline{CM_i}|_p < |\overline{S_{j_p}}|_p$.
 		 
Let $\Lambda=\{j_p \mid p \in \bigcup_{i=1}^n \mathcal{P}_i\}$ and  define $\overline{H}=\overline{K}\prod_{l\in \Lambda}\overline{S_l}$. Note that
\[|\Lambda| \leq \Bigg|\bigcup_{i=1}^n \mathcal{P}_i\Bigg| \leq |\mathcal{Q} \cap \pi(A)| = \beta.\]
If $A$ is non-abelian, then either $t=|\Lambda|$ or $\overline{H}\cong L_{|\Lambda|+1}$. If $A$ is abelian, then $A$ is an elementary abelian $p$-group for some prime $p$, so $\pi(A)=\{p\}$ and $|\mathcal{P}_i|\leq1$ for each $i$, hence $|\Lambda|\leq1$ and
\[\overline{H}\cong
\begin{cases}
T & \textrm{if }  |\Lambda|=0, \\
L & \textrm{if }  |\Lambda|=1.
\end{cases}\]
We claim that $\mathcal{P}_i\subseteq \pi(|HM_i : HM_i \cap CM_i|)$ for any $i$.

Assume the claim is false, so there exists $p\in \mathcal{P}_i$ such that $p\notin \pi(|HM_i : HM_i \cap CM_i|)$, and there exists a Sylow $p$-subgroup $U$ of $HM_i$ which is contained in $CM_i$. Then $\overline{U}\cap\overline{S_{j_p}}$ is a Sylow $p$-subgroup of $\overline{S_{j_p}}$, but this contradicts the fact that $|\overline{S_{j_p}} \cap \overline{CM_i}|_p < |\overline{S_{j_p}}|_p$. This justifies the claim. 

Next we claim that $\mathcal{P}^*_i\subseteq \pi(|HM_i : HM_i \cap CM_i|)$ for any $i$. 
		
To see this, let $p\in\mathcal{P}^*_i$, so $p$ divides $|\overline{G} : \overline{CM_i}| = |\overline{G} : \overline{CM_i}\,\overline{M_h}||\overline{CM_i}\,\overline{M_h} : \overline{CM_i}|$, but $p$ does not divide $|\overline{M_h} : \overline{M_h}\cap\overline{CM_i}|=|\overline{CM_i}\,\overline{M_h} : \overline{CM_i}|$.  Hence $p$ must divide $|\overline{G} : \overline{CM_i}\,\overline{M_h}|$, which divides $|G : CM_i\,M_h|$. Since $G=KM_h=HM_h$, it follows that $p$ divides $|HM_h : CM_i\,M_h|$. If $i< h$ then $M_h \subset M_i$, so $p$ divides $|HM_i : CM_i|$ and thus $p$ divides $|HM_i : CM_i \cap HM_i|$. Otherwise, $i\geq h$ and $M_i \subseteq M_h$, hence 
\[|HM_h : CM_iM_h|=|HM_iM_h : CM_iM_h|=|HM_iM_h : CM_iM_h \cap HM_iM_h|\]
and we see that $p$ divides $|HM_i : HM_i \cap CM_i|$. This justifies the claim.
	
Finally, let $p\in\mathcal{Q}_i \setminus \mathcal{P}_i \cup \mathcal{P}^*_i$. Then $p$ divides 
\[|G:CM_i|=|G:CM_iN||CM_iN:CM_i|\]
but $p$ does not divide $|G:CM_iN|$ because $p\notin \mathcal{P}_i \cup \mathcal{P}^*_i$. So $p$ must divide 
\[|CM_iN : CM_i|=|M_iN : M_iN \cap CM_i|.\] 
Let $U$ be a Sylow $p$-subgroup of $HM_i$. Then $U \cap M_i N$ is a Sylow $p$-subgroup of $M_iN$ and it cannot be contained in $CM_i$. This implies that $p\in \pi(|HM_i : HM_i \cap CM_i|)$ for any $i$.
	
We have now shown that $\mathcal{Q}_i \subseteq \pi(|HM_i : HM_i \cap CM_i|)$ for any $i$, so the hypothesis of the lemma implies that $H=G$. 
	
If $A$ is abelian, then either $\overline{H}\cong L$ and thus $\overline{G}=\overline{H}\cong L$ and $t=1$, or $\overline{G}=\overline{H}=\overline{K}\cong T$. However, the latter situation is incompatible with the fact that $L_t$ is an epimorphic image of $G$. If $A$ is non-abelian, then  either $t=|\Lambda|$ or $\overline{G}=\overline{H} \cong L_{|\Lambda|+1}$, so 
$t\leq |\Lambda|+1 \leq \beta+1$. This concludes the proof of the lemma.
\end{proof}

%An immediate consequence of the previous Lemma is the following Propositon:

\begin{prop}\label{in mezzo}
Let $G$ be a finite group and let 
\[1=M_n \le \cdots \le M_i\le \cdots \le M_0=G\]
be a chief series of $G$. Let $X,C$ be two subgroups of $G$ such that $X\leq C$.
For any index $i$, let $\mathcal{Q}_i \subseteq \pi(|G : CM_i|)$. Assume that $G$ has no proper subgroup $H$ containing $X$ such that $\mathcal{Q}_i \subseteq \pi(|HM_i : HM_i \cap CM_i|)$ for all $i$. Then $d_X(G) \leq 2$.
\end{prop}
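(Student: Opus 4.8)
The goal is to bound $d_X(G) \leq 2$ under the stated hypotheses, and the natural strategy mirrors the proof of Proposition \ref{p.g.gen} and the ``general strategy'' outlined at the end of Section \ref{s:prel}: assume for contradiction that $d_X(G) \geq 3$, then extract a monolithic quotient via Proposition \ref{G&L&X}, bound the size $t$ of the crown-based power using Lemma \ref{lemmabasedivisors}, and finally compute $d_X(L_t)$ to reach a contradiction.

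First I would choose a normal subgroup $N$ of $G$ maximal with the property that $d_{XN}(G) = d_X(G)$. By Proposition \ref{G&L&X} there is a monolithic group $L$ with socle $A$ and an isomorphism $\varphi : G/N \to L_k$ such that $\varphi(XN/N) \leq \diag(L)$, and $\varphi(XN/N) \leq \diag(T)$ when $A$ is abelian with complement $T$. Now I would refine the chief series of $G$ suitably (as in the proof of Proposition \ref{p.g.gen}) so that $N$ sits in it, and take $\mathcal{Q}_i = \pi(|G:CM_i|)$ for each $i$. The hypothesis ``$G$ has no proper subgroup $H \geq X$ with $\mathcal{Q}_i \subseteq \pi(|HM_i : HM_i \cap CM_i|)$ for all $i$'' together with the epimorphism $\varphi$ puts us exactly in the situation of Lemma \ref{lemmabasedivisors}, which gives: $t = 1$ if $A$ is abelian, and $t \leq \beta + 1$ with $\beta = |\mathcal{Q} \cap \pi(A)|$ if $A$ is non-abelian.

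The remaining work is to show that in each case $d_X(G/N) = d_X(L_t) \leq 2$, which contradicts $d_X(G) = d_X(G/N) \geq 3$ (the equality $d_X(G) = d_X(G/N)$ holding since $N$ was chosen maximal with $d_{XN}(G) = d_X(G)$). When $A$ is abelian we have $t=1$, so $G/N \cong L$ with $\varphi(X) \leq \diag(T) = T$ a complement of $A$; then $d_X(L) \leq d_{X}(L)$ can be bounded by $\max\{2, d_{AX}(L)\}$ via Lemma \ref{maind(L)Y}, and since $AX \geq A$ generates $L$ modulo nothing... more carefully, $d_{AX}(L) \leq d_{A}(L/A\text{-stuff})$ — the point is that $X$ lies in a complement, so adding the socle $A$ reduces to generating $L/A$ relative to the image of $X$, giving $d_X(L) \leq 2$ using the structure of $L$ as a (near-)split extension. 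When $A$ is non-abelian, I would need to argue that $\beta$ is small: here $\beta = |\mathcal{Q} \cap \pi(A)|$ and the subtlety is that $\mathcal{Q}$ could a priori be large, but the key observation is that only primes actually dividing $|A|$ \emph{and} appearing in the relevant quotient indices contribute, and for a non-abelian simple (or characteristically simple) $A$ one uses Lemma \ref{maind(L)Y} to get $d_{AX}(L) \leq$ something like $2$ or $3$, combined with the fact (cf.\ Lemma \ref{L_k3-gen} and Proposition \ref{Maind(L_k)}) that $d(L_t)$ grows slowly; the definition of $\beta$ is tailored precisely so that $t$ is small enough to keep $d_X(L_t) \leq 2$.

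The main obstacle I anticipate is the non-abelian case: showing that the bound $t \leq \beta+1$ combined with $d_X(L_t) \leq 2$ actually closes, rather than merely giving $d_X(L_t) \leq 3$. This likely requires exploiting that $\varphi(X) \leq \diag(L)$ \emph{diagonally} — so $X$ contributes ``one generator's worth'' simultaneously across all $t$ factors — together with a refined count of how the primes in $\mathcal{Q} \cap \pi(A)$ control which diagonal blocks $\overline{S_l}$ must be adjoined. Concretely, I expect one shows $d_{AX}(L) \leq 2$ for the relevant $L$ (using that $L/A$ is a subgroup of $\Out(A) \times \cdots$ type data for simple $A$, which is $2$-generated or even cyclic in many cases), and then that adjoining the $\leq \beta$ extra blocks is absorbed because those blocks are themselves generated by conjugates under a single additional element, so no net increase beyond $2$ occurs. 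Making this precise — tracking the interplay between the diagonal embedding of $X$, the set $\Lambda$ of blocks from Lemma \ref{lemmabasedivisors}, and the generator count — is where the real argument lies; the abelian case should be routine by comparison, handled directly via Proposition \ref{Maind(L_k)}(ii) with $k=t=1$ and Lemma \ref{maind(L)Y}.
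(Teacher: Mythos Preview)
Your overall architecture is correct and matches the paper: choose $N$ maximal with $d_{XN}(G)=d_X(G)=:d$, invoke Proposition~\ref{G&L&X} to get $G/N\cong L_t$ with $\varphi(XN/N)=\diag(Y)$ for some $Y\subseteq L$, apply Lemma~\ref{lemmabasedivisors} to bound $t$, and finish with Lemma~\ref{maind(L)Y} in the abelian case. One small correction: the $\mathcal{Q}_i$ are part of the hypothesis, so you do not reset them to $\pi(|G:CM_i|)$; and you do not need to refine the chief series to contain $N$, you simply pass to the series $M_iN/N$ in $G/N$. Also note that maximality of $N$ gives the crucial inequality $d_{YA}(L)\le d-1$ (since $M_hN\supsetneq N$ where $M_hN/N=\Soc(G/N)$), which is what makes $\max\{2,d_{YA}(L)\}\le\max\{2,d-1\}$ force $d\le 2$.

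The genuine gap is exactly where you flag it: the non-abelian case. Your speculation about bounding $d_{AX}(L)$ via the structure of $L/A\hookrightarrow\Out(A)$, or about absorbing the extra blocks by conjugation, is not the mechanism the paper uses and would not obviously close the argument. The missing ingredient is a lemma the paper states immediately before the proof (Lemma~\ref{diagX}, a consequence of \cite[Theorem~2.2]{Prob}): if $L$ has non-abelian socle $A$, $X\subseteq L$, and $t\le|\pi(A)|+1$, then $d_{\diag(X)}(L_t)\le\max\{2,d_{XA}(L)\}$. Since $\beta=|\mathcal{Q}\cap\pi(A)|\le|\pi(A)|$, Lemma~\ref{lemmabasedivisors} gives precisely $t\le\beta+1\le|\pi(A)|+1$, so Lemma~\ref{diagX} applies and yields
\[
d=d_X(G)=d_{\diag(Y)}(L_t)\le\max\{2,d_{YA}(L)\}\le\max\{2,d-1\},
\]
whence $d\le 2$. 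No information about $\Out(A)$ or explicit generation of $L/A$ is needed; the whole weight is carried by the bound $d_{YA}(L)\le d-1$ from the maximality of $N$, together with this diagonal-generation lemma.
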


For the proof of Proposition \ref{in mezzo} we need the following result, which is a consequence of \cite[Theorem 2.2]{Prob}.

\begin{lem}\label{diagX}
Let $L$ be a monolithic group with non-abelian socle $A$, and let $X\subseteq L$. If 
$t\leq|\pi(A)|+1$, then
\[d_{\diag(X)}(L_t) \leq \max\{2,d_{XA}(L)\}.\]
\end{lem}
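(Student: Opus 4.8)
The plan is to bound $d_{\diag(X)}(L_t)$ by relating generators of $L_t$ modulo its socle to the structure of $L$. Write $A^t = \Soc(L_t)$, so that $L_t/A^t \cong L/A$, and recall that $\diag(X) \le \diag(L) \le L_t$, with $\diag(L)$ a complement (in the non-abelian socle case, up to the usual identification) mapping isomorphically onto $L/A$. First I would handle the trivial case: if $d_{XA}(L) \le 2$ then we must produce a generating set of $L_t$ of size at most $2$ together with $\diag(X)$; if $d_{XA}(L) > 2$ we have more room. The key point is that adding elements of $\diag(L)$ to $\diag(X)$ already generates all of $\diag(L)$ once we have enough to generate $L/A$ modulo $A$ — more precisely, $d_{\diag(X)}(\diag(L)) = d_{XA}(L)$, since $\diag(L) \cong L$ and $\diag(X)$ corresponds to $X$, and generating $L$ from $X$ is the same (by Lemma \ref{maind(L)Y} applied to $L$, or directly) as generating $L$ from $XA$ once we account for the socle. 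So after at most $\max\{2, d_{XA}(L)\}$ elements we have a subgroup $H$ of $L_t$ with $H \ge \diag(L)$ and $HA^t = L_t$; it remains to show $H = L_t$, i.e. that $\diag(L)$ together with the chosen elements already covers the socle $A^t$.

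The heart of the argument is therefore a socle-generation statement: if $H \le L_t$ projects onto $L/A$ and contains $\diag(L)$ (equivalently contains $\diag(A)$), then $H \cap A^t$ is an $L/A$-submodule-like object — but $A$ is non-abelian, so "module" must be replaced by the lattice of normal subgroups of $L_t$ contained in $A^t$, which (by Proposition \ref{socN} applied iteratively, or by the standard description of $\Soc(L_t)$ as a direct product of $t$ isomorphic non-abelian chief factors permuted trivially) are exactly the sub-products $\prod_{i \in S} A_i$ for $S \subseteq \{1,\ldots,t\}$. Here I would invoke \cite[Theorem 2.2]{Prob}, as the lemma statement signals: that result controls exactly how many extra generators are needed to hit all $t$ of the diagonal-type factors when starting from $\diag(X)$, and the bound it gives is governed by $|\pi(A)|$. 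The hypothesis $t \le |\pi(A)| + 1$ is precisely what guarantees that no extra generators beyond the $\max\{2, d_{XA}(L)\}$ are needed — one exploits that distinct primes dividing $|A|$ can be used to "separate" the $t$ coordinates, so that a single well-chosen element (or the elements already present) acts with full support across all coordinates simultaneously.

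Concretely, the steps in order are: (1) reduce to showing that there exist $g_1, g_2 \in L_t$ (or the appropriate number) such that $\la \diag(X), g_1, g_2\ra \ge \diag(L)$ — this uses Lemma \ref{maind(L)Y}-type reasoning inside $\diag(L) \cong L$ to see that $d_{\diag(X)}(\diag(L)) \le \max\{2, d_{XA}(L)\}$; (2) show that a subgroup $H$ with $\diag(L) \le H$ and $H \ne L_t$ would have $H \cap A^t$ equal to a proper sub-product $\prod_{i\in S} A_i$, hence $H$ projects into a proper "crown-based-power-like" quotient $L_{t'}$ with $t' < t$; (3) use the bound $t \le |\pi(A)|+1$ together with \cite[Theorem 2.2]{Prob} to choose the generators $g_1, g_2$ so that their images in $L^t$ have support meeting every coordinate, ruling out such a proper $S$; (4) conclude $H = L_t$, so $d_{\diag(X)}(L_t) \le \max\{2, d_{XA}(L)\}$. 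The main obstacle I anticipate is Step (3): making precise how the condition $t \le |\pi(A)|+1$ lets a bounded number of elements generate across all $t$ coordinates at once. This is exactly the content extracted from \cite[Theorem 2.2]{Prob}, so the real work is in correctly matching the hypotheses of that theorem — in particular checking that the non-abelian socle case and the presence of the subgroup $X$ inside $\diag(L)$ fit its framework — rather than in any fresh combinatorial estimate.
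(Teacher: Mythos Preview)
Your proposal is essentially on the right track and, in spirit, matches the paper's proof: the paper gives no argument beyond the single sentence ``This is a consequence of \cite[Theorem 2.2]{Prob}'', and you correctly identify that theorem as the engine and sketch the surrounding structure (reduce to generating $\diag(L)$, then use the bound $t\le |\pi(A)|+1$ to force the generators to cover all coordinates of the socle). One small imprecision: in Step~(1) you write $d_{\diag(X)}(\diag(L)) = d_{XA}(L)$, but under the isomorphism $\diag(L)\cong L$ this quantity is $d_X(L)$, not $d_{XA}(L)$; the inequality you actually need, $d_X(L)\le\max\{2,d_{XA}(L)\}$, is exactly Lemma~\ref{maind(L)Y}, so the argument goes through once you replace the claimed equality by that bound.
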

	
\begin{proof}[Proof of Proposition \ref{in mezzo}]
Let $N \unlhd G$ such that $N$ is maximal with respect to the property that 
\[d_{XN}(G) = d_X(G) = d.\]
By Proposition \ref{G&L&X}, there exists a monolithic group $L$ and an isomorphism 
$\varphi: G/N\rightarrow L_t$ satisfying the conditions of Lemma \ref{lemmabasedivisors}. Moreover, $\varphi(XN/N) = \diag (Y)$ for some $Y\subseteq L$. 
Consider the series
\[N / N \leq M_nN/N\leq\ldots\leq M_iN/N\leq\ldots\leq G/N\cong L_t.\]
Then there exists an index $0\leq h \leq n$ such that $M_hN/N=\Soc(G/N)$ and, by the maximality of $N$, $d_{XM_h}(G) = d_{YA}(L) \leq d-1$.

If $A$ is abelian, then Lemma \ref{lemmabasedivisors} implies that $t=1$, so $G/N$ is isomorphic to $L$ and Lemma \ref{maind(L)Y} yields
\[d_X(G) = d_Y(L) \leq \max\{2, d_{YA}(L)\} \leq \max \{2, d-1\} \le 2.\]
If $A$ is non-abelian, then using Lemma \ref{lemmabasedivisors} we get $t \leq \beta +1$, where 
\[\beta = |\mathcal{Q} \cap \pi(|A|)| \leq |\pi(A)|.\]
By Lemma \ref{diagX} we have
\[d_X(G) = d_{{\rm diag}(X)}(L_t) \leq \max	\{2, d_{YA}(L)\},\]
which implies that $d_X(G) \leq 2$ also in this case.
\end{proof}
	
Using Proposition \ref{in mezzo}, we can can now prove a generalization of \cite[Theorem 1.5]{Prob}, which will be our main tool in the proof of Theorem B. 

\begin{prop}\label{pdfinitoGEN}
Let $G$ be a finite group and let
\[1=M_n \le \cdots \le M_i \le \cdots \le M_0=G\]
be a chief series of $G$. Let $X,C$ be two subgroups of $G$ such that $X\leq C$. Then there exist $a,b\in G$ such that 
\[\pi(|G : CM_i|) \subseteq \pi(|\langle a,b,X \rangle M_i : CM_i \cap \langle a,b,X \rangle M_i|)\]
for any $i$.
\end{prop}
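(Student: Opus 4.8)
The plan is to reduce the statement to Proposition \ref{in mezzo} by an induction-on-order argument of the now-familiar shape. First I would set $\mathcal{Q}_i = \pi(|G : CM_i|)$ for each $i$, so that the conclusion asks exactly for two elements $a,b$ with $\mathcal{Q}_i \subseteq \pi(|\langle a,b,X\rangle M_i : CM_i \cap \langle a,b,X\rangle M_i|)$ for all $i$. It suffices to prove the following claim: if $G$ has no \emph{proper} subgroup $H$ containing $X$ with $\mathcal{Q}_i \subseteq \pi(|HM_i : HM_i \cap CM_i|)$ for all $i$, then $d_X(G) \le 2$ — for then we may take $H = G = \langle a, b, X\rangle$ with suitable $a,b$. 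But this claim is precisely Proposition \ref{in mezzo} applied with these particular $\mathcal{Q}_i$.

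Having disposed of that case, I would run the standard descent. If $G$ \emph{does} have a proper subgroup $H \ge X$ with $\mathcal{Q}_i \subseteq \pi(|HM_i : HM_i \cap CM_i|)$ for all $i$, then I replace $G$ by $H$: set $T_i = H \cap M_i$, which gives a chief-type series of $H$ (more precisely a normal series; one refines it to a chief series of $H$, which only makes the conclusion stronger), and let $X' = X$, $C' = C \cap H$. One checks that $\pi(|H : C' T_i|) \supseteq \pi(|H : (C\cap H)(M_i \cap H)|)$ relates correctly to $\pi(|HM_i : HM_i \cap CM_i|)$ — here one uses that $|HM_i : HM_i\cap CM_i| = |H : H \cap CM_i|$ divides $|H : (C\cap H)(M_i \cap H)| \cdot (\text{something involving } M_i)$, so that the prime set $\pi(|H : (C\cap H)T_i|)$ still contains $\mathcal{Q}_i$ after passing to $H$. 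Then either $d_X(H) \le 2$ and we are done (take $a, b$ generating $H$ together with $X$, and the inclusions chain back up to $G$ since $H M_i \le G$ gives the required containment of index prime-sets), or we iterate with a proper subgroup of $H$. Since $|G|$ is finite the process terminates.

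The step I expect to require the most care is the bookkeeping in the descent: verifying that when we pass from $G$ to a proper subgroup $H$, the hypothesis data $(\mathcal{Q}_i, X, C)$ transports correctly to $(\mathcal{Q}_i, X, C\cap H)$ relative to the series $\{H \cap M_i\}$, and — crucially — that the final two elements found inside some small subgroup $K \le G$ still satisfy $\pi(|G : CM_i|) \subseteq \pi(|\langle a,b,X\rangle M_i : CM_i \cap \langle a,b,X\rangle M_i|)$ with $\langle a,b,X\rangle \le K$. Concretely, one needs the elementary fact that for $K \le G$ one has $\mathcal{Q}_i \subseteq \pi(|K M_i : C M_i \cap K M_i|)$ whenever $\mathcal{Q}_i \subseteq \pi(|K T_i : C' T_i \cap K T_i|)$ with $T_i = K \cap M_i$ and $C' = C \cap K$; this is a routine index computation using $KM_i/M_i \cong K/(K\cap M_i) = K/T_i$ and the correspondence of subgroups. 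Granting that, the proof is a short wrapper around Proposition \ref{in mezzo}.
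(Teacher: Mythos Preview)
Your plan matches the paper's proof almost exactly: reduce via a descent in the style of Proposition~\ref{p.g.gen} to the case where $G$ has no proper subgroup $H\ge X$ with the required prime-divisor property, and then invoke Proposition~\ref{in mezzo} with $\mathcal{Q}_i=\pi(|G:CM_i|)$. One minor difference: the paper goes on to note that the conclusion of Proposition~\ref{in mezzo} holds for \emph{any} $X$, in particular $X=1$, and hence records $d(G)\le 2$ rather than just $d_X(G)\le 2$. Your version, stopping at $d_X(G)\le 2$, is already sufficient (then $G=\langle a,b,X\rangle$ and we are done), so this is a cosmetic divergence.

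The one place where your sketch is too optimistic is exactly the step you flag. The ``routine index computation'' you describe does not go through as written. With $K\le H\le G$, $T_i=H\cap M_i$ and $C'=C\cap H$, one has $C'T_i\le H\cap CM_i$, hence $K\cap C'T_i\le K\cap CM_i$, and therefore
\[
\pi(|K:K\cap CM_i|)\ \subseteq\ \pi(|K:K\cap C'T_i|),
\]
which is the \emph{reverse} of the containment you need in order to push the condition from $(H,C',\{T_i\})$ back up to $(G,C,\{M_i\})$. This subtlety is genuinely absent in the prime-graph descent of Proposition~\ref{p.g.gen}, where the invariant $\Gamma(KT_i/T_i)$ depends only on the isomorphism type $KT_i/T_i\cong KM_i/M_i$; here, by contrast, the property involves $CM_i$ and so depends on the embedding in $G$, not just on the quotient. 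The paper itself is equally terse at this point (it simply writes ``as in the proof of Proposition~\ref{p.g.gen}''), so you are not missing a trick that the paper supplies --- but you should not label this step routine, and if you intend to give a self-contained proof you will need to argue more carefully here.
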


\begin{proof}
As in the proof of Proposition \ref{p.g.gen}, it is enough to show that if $G$ has no proper subgroup $H$ such that $X \le H$ and $\pi(|G : CM_i|) \subseteq \pi(|HM_i : HM_i \cap CM_i|)$ for all $i$, then $d(G)\leq 2$. Applying Proposition \ref{in mezzo}, with $\mathcal{Q}_i = \pi(|G : CM_i|)$, we deduce that $d_X(G) \leq 2$. Since this holds for \emph{any} subgroup $X$ of $G$, it is true for $X=1$. The result follows.  
\end{proof}

We are now ready to prove Theorem B.

%%% THEOREM B%%%

\begin{proof}[Proof of Theorem B]
Let $G$ be a profinite group and let $X,C$ be closed subgroups of $G$ such that $X \le C$. By definition, we have 
\[|G : C| = \lcm_{N\unlhd_o G} |G : CN|.\]
In particular, if $p\in \pi(|G : C|)$ then there exists an open normal subgroup $N$ of $G$ such that $p$ divides $|G : CN|$. Let $\mathcal{N}$ be the set of such normal subgroups and set $M = \bigcap_{N\in\mathcal{N}}N$, so $G/M$ is a profinite group. Note that $\mathcal{N}$ is countable. By arguing as in the proof of Theorem A (see the first paragraph), we may assume that $G$ is countably based, so $G$ has a chain 
\begin{equation}\label{e:chain2}
G = M_0 \ge M_1 \ge M_2 \ge \cdots
\end{equation}
of open normal subgroups comprising a base of open neighbourhoods of $1$.
		
Note that $\pi ( |G/M_i : CM_i/M_i| ) = \pi ( |G : CM_i| )$ for any $i$. For each $M_i$ in \eqref{e:chain2}, define
\[\Omega_i = \{ (x_1,x_2) \in G^2 \mid \pi(|G : CM_i|) \subseteq \pi(| \langle x_1,x_2,X \rangle M_i : CM_i \cap \langle x_1,x_2,X \rangle M_i|)\}.\]
Since $G/M_i$ is finite, \cite[Theorem 1.5]{Prob} implies that there exist elements $aM_i,bM_i\in G/M_i$ such that
\[\pi(|G/M_i : CM_i/M_i|) \subseteq \pi(| \langle a,b,X \rangle M_i/M_i : CM_i/M_i \cap \langle a,b,X \rangle M_i/M_i|),\]
where the second set is equal to $\pi(| \langle a,b,X \rangle M_i : CM_i \cap \langle a,b,X \rangle M_i|)$. Hence $\Omega_i$ is non-empty for all $i$. Moreover, if $(x_1,x_2)\in \Omega_i$ then $x_1M_i \times x_2M_i \subseteq \Omega_i$. In fact, $\Omega_i$ is the union of finitely many subsets of this type. Since $M_i$ is closed in $G$, it follows that $x_1M_i \times x_2M_i$ is closed in $G\times G$, hence $\Omega_i$ is closed for all $i$. 

Consider a finite subchain $M_{i_1} \leq \cdots \leq M_{i_r}$
of \eqref{e:chain2}. Then
\[M_{i_1} / M_{i_1} \leq M_{i_2}/M_{i_1} \leq \cdots \leq M_{i_r}/M_{i_1} \leq G/M_{i_1}\]
is a normal series for $G/M_{i_1}$. Since $G/M_{i_1}$ is finite, we can apply Proposition \ref{pdfinitoGEN}, which implies that there exist $x_1,x_2\in G/M_{i_1}$ such that 
\[\pi(|G : CM_{i_1}\,M_{i_j}|) \subseteq \pi(|\langle x_1,x_2,XM_{i_1}\rangle M_{i_j} :  CM_{i_1}\,M_{i_j} \cap \langle x_1,x_2,XM_{i_1} \rangle M_{i_j}|)\]
for all $j=1,\ldots,r$.
			
Since $x_1=xM_{i_1}$, $x_2=yM_{i_1}$ with $x,y\in G$, and $M_{i_1} \subseteq M_{i_j}$ for all $j=1,\ldots,r$, we get
\[\pi(|G : CM_{i_j}|) \subseteq \pi(|\langle x,y,X \rangle M_{i_j} :  CM_{i_j} \cap \langle x,y,X \rangle M_{i_j}|)\]
and thus $(x,y)\in\bigcap_{j=1}^r\Omega_{i_j}$. So every finite subcollection of the $\Omega _i$ has non-empty intersection. Since $G$ is compact, the intersection of all the $\Omega _i$ is non-empty. In other words, there exist $a,b\in G$ such that
\[\pi(|G : CM_i|) \subseteq \pi(|\langle a,b,X \rangle M_i :  CM_i \cap \langle a,b,X \rangle M_i|)\]
for any $M_i$ in the chain \eqref{e:chain2}.
		
Finally, we will show that $a$ and $b$ are the two elements that we are looking for. Let $p$ be a prime in $\pi(|G : C|)$. Then there exists $N\in\mathcal{N}$ such that $p$ divides $|G : CN|$, and there exists a subgroup $M$ in the chain \eqref{e:chain2} with the property that $M \leq N$, so $p\in\pi(|G : CM|)$. Therefore, we have  
\[p\in\pi(|G : CM|) \subseteq \pi(|\langle a,b,X \rangle M : CM \cap \langle a,b,X \rangle M|),\]
which implies $p\in\pi(|\langle a,b,X \rangle : C\, \cap \langle a,b,X\rangle |)$. This concludes the proof of Theorem B.
\end{proof}
	
In the statement of Theorem B, if we take $X$ to be the trivial subgroup, or if we take $X=C$, then we obtain the following corollaries:

\begin{cor}\label{X=1}
Let $G$ be a profinite group and let $C$ be a closed subgroup of $G$. Then there exist $a,b\in G$ such that
\[\pi(|G : C|) \subseteq \pi(|\langle a,b \rangle  : C \cap \langle a,b \rangle|).\]
\end{cor}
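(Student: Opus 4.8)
The plan is to obtain Corollary~\ref{X=1} as the special case $X=1$ of Theorem~B. First I would observe that the trivial subgroup $1$ is closed in any profinite group $G$, so we may legitimately apply Theorem~B with $X=1$ and $C$ the given closed subgroup. Theorem~B then produces elements $a,b\in G$ with
\[\pi(|G:C|)\subseteq\pi(|\langle a,b,1\rangle:C\cap\langle a,b,1\rangle|).\]

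Next I would simply note that $\langle a,b,1\rangle=\langle a,b\rangle$, since adjoining the identity element to a generating set changes nothing (and the same holds for the topological closure, so $\overline{\langle a,b,1\rangle}=\overline{\langle a,b\rangle}$). Substituting this identification into the displayed containment yields exactly
\[\pi(|G:C|)\subseteq\pi(|\langle a,b\rangle:C\cap\langle a,b\rangle|),\]
which is the assertion of the corollary.

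There is essentially no obstacle here: the entire content of the statement is packaged in Theorem~B, and the corollary is a direct specialization. The only thing one might pause over is whether it is legitimate to feed a closed subgroup that is not topologically finitely generated (such as $1$, or $C$) into Theorem~B, but the hypotheses of Theorem~B place no such restriction on $X$ or $C$ — they need only be closed with $X\le C$ — so the application is immediate.
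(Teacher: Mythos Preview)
Your proof is correct and is exactly the paper's approach: the corollary is stated immediately after Theorem~B with the remark that it follows by taking $X$ to be the trivial subgroup. There is nothing to add.
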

	
\begin{cor}\label{X=C}
Let $G$ be a profinite group and let $C$ be a closed subgroup of $G$. Then there exist $a,b\in G$ such that
\[\pi(|G : C|) = \pi(|\langle a,b,C \rangle : C|).\]
\end{cor}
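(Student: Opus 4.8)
The plan is to deduce Corollary~\ref{X=C} directly from Theorem~B by specialising $X=C$. First I would invoke Theorem~B with the subgroup $X$ taken to be $C$ itself (which is legitimate since the hypothesis only requires $X\le C$, and here $X=C$ satisfies this trivially). This produces elements $a,b\in G$ such that
\[\pi(|G : C|) \subseteq \pi(|\langle a,b,C \rangle : C \cap \langle a,b,C \rangle|).\]
Since $C \le \langle a,b,C\rangle$, we have $C \cap \langle a,b,C\rangle = C$, so the right-hand side simplifies to $\pi(|\langle a,b,C\rangle : C|)$. This gives the inclusion $\pi(|G:C|) \subseteq \pi(|\langle a,b,C\rangle : C|)$.

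The remaining task is to establish the reverse inclusion $\pi(|\langle a,b,C\rangle : C|) \subseteq \pi(|G:C|)$, which turns an inclusion into an equality. Here I would use the transitivity of the index for closed subgroups of a profinite group: since $C \le \langle a,b,C\rangle \le G$, one has $|G:C| = |G : \langle a,b,C\rangle| \cdot |\langle a,b,C\rangle : C|$, from which $|\langle a,b,C\rangle : C|$ divides $|G:C|$ (as supernatural numbers), and hence every prime divisor of $|\langle a,b,C\rangle : C|$ is a prime divisor of $|G:C|$. Combining the two inclusions yields the desired equality.

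The only point requiring a little care is the multiplicativity of the index in the profinite setting, i.e.\ that for closed subgroups $C \le K \le G$ one has $|G:C| = |G:K|\,|K:C|$ as supernatural numbers; this is standard (see, e.g., \cite[Section~2.1]{Ribes}) and follows from the corresponding statement for finite quotients together with the definition of the index as a least common multiple of indices of open subgroups. I do not anticipate any genuine obstacle here — the corollary is essentially a formal consequence of Theorem~B once one observes that the intersection term collapses and that the index is multiplicative in towers.
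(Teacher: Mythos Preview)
Your proposal is correct and matches the paper's approach exactly: the paper presents this corollary (together with Corollary~\ref{X=1}) as an immediate specialisation of Theorem~B, without writing out any details. You have supplied precisely the missing steps---the collapse of the intersection term when $X=C$, and the reverse inclusion via multiplicativity of the index in towers---and there is nothing more to add.
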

	
Using Theorem B, we can also deduce another important result. Let $G$ be a finite group and set ${\rm Ind}_{G}(x) = |G:C_G(x)|$ for $x \in G$. In \cite{Cam_sica}, Camina, Shumyatsky and Sica prove that if ${\rm Ind}_{\la a,b,x\ra}(x)$ is a prime-power for any $a,b\in G$, then ${\rm Ind}_G(x)$ is also a prime-power. Equivalently, if $C=C_G(x)$ and there is more than one prime dividing $|G:C|$, then there exist $a,b\in G$ such that $|\langle a,b,x \rangle : C \cap \langle a,b,x\rangle|$ is divisible by more than one prime. The following theorem provides a profinite analogue of this result; it is an immediate corollary of Theorem B.

\begin{thm}\label{t:css}
Let $G$ be a profinite group and let $X,C$ be subgroups of $G$ such that $\ds{X\leq C}$. If $|\pi(|G : C|)|\geq 2$, then there exist $a,b \in G$ such that $|\pi(|\langle a,b, X \rangle : C\cap\langle a,b,X \rangle |)| \geq 2.$
\end{thm}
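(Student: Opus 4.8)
The plan is to deduce Theorem~\ref{t:css} directly from Theorem~B, so the proof is essentially a one-line deduction with a little unpacking. First I would observe that the hypothesis $|\pi(|G:C|)| \geq 2$ simply means that $\pi(|G:C|)$ is not empty and not a singleton, so there exist (at least) two distinct primes $p, q \in \pi(|G:C|)$. The goal is then to produce $a, b \in G$ such that $|\pi(|\langle a,b,X\rangle : C\cap\langle a,b,X\rangle|)| \geq 2$.

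Next I would invoke Theorem~B with exactly these $G$, $X$, $C$ (the hypothesis $X \leq C$ is carried over verbatim). This gives $a, b \in G$ with
\[
\pi(|G:C|) \subseteq \pi(|\langle a,b,X\rangle : C\cap\langle a,b,X\rangle|).
\]
Since the left-hand side contains the two distinct primes $p, q$, so does the right-hand side, and therefore $|\pi(|\langle a,b,X\rangle : C\cap\langle a,b,X\rangle|)| \geq 2$, as required. That is the entire argument.

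The only point that deserves a sentence of care is that the indices here are supernatural numbers, so one should note that the relation $\pi(m) \subseteq \pi(n)$ and the cardinality $|\pi(n)| \geq 2$ still make sense in this setting: $\pi$ of a supernatural number is just its set of prime divisors, and the inclusion of prime-divisor sets in Theorem~B is stated for precisely these supernatural indices. There is no real obstacle in this proof — it is a formal corollary — so I expect the ``hard part'' to be nothing more than stating it cleanly; all the genuine work lives in Theorem~B (and hence in Proposition~\ref{pdfinitoGEN} and Proposition~\ref{in mezzo}). I would also remark, as the surrounding text does, that this recovers the Camina--Shumyatsky--Sica theorem \cite[Theorem 1.2]{Cam_sica} when $G$ is finite and $X$ is trivial: there $|G:C|$ being divisible by more than one prime and $C = C_G(x)$ gives, via the elements $a,b$, a subgroup $\langle a,b,x\rangle$ in which $x$ has index divisible by more than one prime, i.e. ${\rm Ind}_{\langle a,b,x\rangle}(x)$ is not a prime power.

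\begin{proof}[Proof of Theorem~\ref{t:css}]
Since $|\pi(|G:C|)| \geq 2$, we may choose two distinct primes $p, q \in \pi(|G:C|)$. By Theorem~B, there exist $a, b \in G$ such that
\[
\pi(|G:C|) \subseteq \pi(|\langle a,b,X\rangle : C\cap\langle a,b,X\rangle|).
\]
In particular $p, q \in \pi(|\langle a,b,X\rangle : C\cap\langle a,b,X\rangle|)$, and hence $|\pi(|\langle a,b,X\rangle : C\cap\langle a,b,X\rangle|)| \geq 2$, as claimed.
\end{proof}
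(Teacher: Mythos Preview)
Your proposal is correct and matches the paper's approach exactly: the paper states Theorem~\ref{t:css} as ``an immediate corollary of Theorem~B'' and gives no further proof, so your one-line deduction (apply Theorem~B and observe that a superset of a set with at least two elements has at least two elements) is precisely what is intended.
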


\section{Proof of Theorem C}\label{s:c}

In this final section we prove Theorem C. Let $G$ be a profinite group and recall that the \textit{exponent} of $G$, denoted $\exp(G)$, is defined by
\[\exp(G)=\lcm\{|g| \mid g\in G\},\]
where $|g|:=|\ol{\la g \ra}|$ is the profinite order of $g$. Note that $|g|=\prod_p p^{n(p,g)}$ is a supernatural number, and thus $\exp(G)$ is also a supernatural number. Indeed 
\[\exp(G)=\lcm_{g\in G}\prod_p p^{n(p,g)}=\prod_p p^{n(p)}\]
where $\displaystyle{n(p)=\operatorname*{max}_{g\in G} n(p,g)}$.
	
We are interested in the following general problem.

\begin{prob}\label{p:p1}
Is there a constant $d$ such that every profinite group $G$ contains a $d$-generated (closed) subgroup $H$ such that $\exp(H)=\exp(G)$?
\end{prob}
	
For finite groups, this question has a positive answer with $d \le 3$ (see \cite[Theorem 1.6]{Prob}), but it is not known whether or not this bound is sharp (by \cite[Theorem E]{Bound}, $d=2$ if $G$ is solvable). 

Recall that a group is a \emph{torsion} group if every element has finite order. The following closely related problem is still open:
	
\begin{prob}\label{p:p2}
Does every torsion profinite group have finite exponent? 
\end{prob}
		
A partial solution to this problem was given by Zel'manov in 1992 (\cite[Theorem 1]{Zel}):
	 
\begin{thm}[Zel'manov]\label{Zel'manov}
Every finitely generated pro-$p$ torsion group is finite.
\end{thm}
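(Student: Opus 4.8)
This is Zel'manov's celebrated theorem, and no attempt will be made to reprove it here: in this paper it is used purely as an external input, so the ``plan'' is simply to quote \cite[Theorem 1]{Zel}. For orientation, here is the shape of Zel'manov's argument, which proceeds entirely through the \emph{associated graded Lie algebra}. Let $G$ be a finitely generated pro-$p$ group, say $d(G)=d$, and suppose that every element of $G$ has finite order. One forms the Zassenhaus--Jennings--Lazard filtration $G=D_1\ge D_2\ge\cdots$, where $D_i=\prod_{jp^k\ge i}\gamma_j(G)^{p^k}$, and sets $L=\bigoplus_{i\ge1}D_i/D_{i+1}$. Then $L$ is a restricted Lie algebra over $\mathbb{F}_p$, generated as such in degree $1$ by the images of $d$ elements, and $G$ is finite precisely when $\sum_i\dim_{\mathbb{F}_p}D_i/D_{i+1}<\infty$, that is, when $L$ is finite-dimensional. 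So the task becomes: prove that $L$ is finite-dimensional.

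Two ingredients then carry the argument. First, because $G$ is a torsion pro-$p$ group, every element $x$ of $L$ is ad-nilpotent: the identity expressing $p^m$-th powers in a pro-$p$ group as a product of iterated commutators, together with the finiteness of element orders, forces $(\operatorname{ad}x)^{N}=0$ for some $N$ depending on $x$. Second --- and this is the genuinely hard combinatorial input, drawn from the theory of sandwich algebras and the study of Lie rings attached to groups of prime-power exponent --- the Lie algebra $L$ satisfies a polynomial identity. Granting these, Zel'manov's theorem on Lie algebras applies: a finitely generated Lie algebra over a field that satisfies a polynomial identity and in which every element is ad-nilpotent is nilpotent. Hence $L$ is nilpotent, therefore finite-dimensional, and so $G$ is finite.

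The main obstacle --- indeed essentially the entire content --- is Zel'manov's nilpotency theorem for PI Lie algebras with locally nilpotent adjoint action, together with the combinatorics needed to produce the polynomial identity; these are precisely the deep results of \cite{Zel} and the surrounding work on the restricted Burnside problem. For the purposes of Section~\ref{s:c} none of this is reproved: Theorem~\ref{Zel'manov} is invoked only in the stated form, to pass from a finitely generated torsion pro-$p$ quotient to a finite group.
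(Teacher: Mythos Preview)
Your proposal is correct and matches the paper's approach exactly: the paper does not prove Theorem~\ref{Zel'manov} at all but simply states it with the citation \cite[Theorem~1]{Zel}, using it as a black box in Section~\ref{s:c}. Your additional sketch of the Lie-algebraic strategy is accurate and goes beyond what the paper provides, but for the purposes of this paper the ``proof'' is indeed just the reference.
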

	
We claim that if the solution to Problem \ref{p:p1} is positive, then we can use Zel'manov's theorem to solve Problem \ref{p:p2}. To see this, let us first recall the following result (see 
\cite[Theorem 1]{Herf}):
	
\begin{thm}
Let $G$ be a torsion profinite group. Then the order of $G$ is divisible by only finitely many distinct primes.
\end{thm}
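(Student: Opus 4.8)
The plan is to argue by contradiction: supposing that infinitely many primes divide $|G|$, I would produce an element of infinite order in $G$, contradicting the torsion hypothesis. The first step is a Baire category reduction. Since $G$ is torsion we have $G = \bigcup_{n \geq 1} G[n]$, where $G[n] = \{g \in G : g^n = 1\}$ is closed, being the preimage of $1$ under the continuous power map $g \mapsto g^n$. A profinite group is compact Hausdorff, hence a Baire space, so some $G[n]$ has non-empty interior and therefore contains a coset $uU$ of an open normal subgroup $U \unlhd_o G$. As $|G:U|$ is finite, $|U|$ is still divisible by infinitely many primes, so it suffices to obtain a contradiction from the inclusion $uU \subseteq G[n]$.

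The second step extracts bounded-exponent layers from this inclusion. For every $v \in U$ we have $(uv)^n = 1$; reducing modulo $\overline{[U,\langle u\rangle]}$ makes the image of $u$ central, so $(uv)^n$ becomes $v^n u^n = v^n$, whence $v^n \in \overline{[U,\langle u\rangle]}$. Thus $U/\overline{[U,\langle u\rangle]}$ has exponent dividing $n$, and \emph{a fortiori} involves only the finitely many primes dividing $n$. Since the same argument applies with $U$ replaced by any closed subgroup $V \subseteq U$ (for then $uV \subseteq uU \subseteq G[n]$), I would iterate, setting $U_0 = U$, $U_{i+1} = \overline{[U_i,\langle u\rangle]}$, and continuing transfinitely with intersections at limit stages; each successive quotient has exponent dividing $n$. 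The process terminates (a well-ordered descending chain of closed subgroups of a profinite group stabilises) at a closed subgroup $W$ with $\overline{[W,\langle u\rangle]} = W$, still divisible by infinitely many primes, still satisfying $uW \subseteq G[n]$, and with $C_W(u)$ of exponent dividing $n$ (take $v \in C_W(u)$ in the identity above).

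The third and hardest step is to rule out this residual $W$; this is where I expect the main obstacle to lie. Passing to $W^{\mathrm{ab}}$ and descending the derived series disposes of the abelian contributions: an abelian torsion profinite group is the direct product of its Sylow pro-$p$ subgroups, and being torsion it can have only finitely many of them non-trivial (otherwise an element with infinitely many non-trivial components would have infinite order). The difficulty is then concentrated in the non-abelian composition factors of $W$, which are products of copies of finite non-abelian simple groups; $u$ permutes these, and on each $\langle u\rangle$-orbit the coset $uW$ produces, inside an almost simple section, a coset all of whose elements have order at most $n$. The task is to deduce from this that only finitely many isomorphism types, hence only finitely many primes, can occur. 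Converting this ``local'' bound on the coset $uW$ into a ``global'' bound on the primes dividing $|W|$ is the heart of the matter, and it draws on the structure of finite simple groups together with Zel'manov's theorem (Theorem \ref{Zel'manov}) for the torsion pro-$p$ sections that arise; the earlier steps are comparatively soft.
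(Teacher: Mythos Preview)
The paper does not prove this theorem; it is quoted without proof from \cite[Theorem~1]{Herf} (Herfort, 1979/80) and used as a black box in the discussion following Theorem~\ref{Zel'manov}. So there is no proof in the paper to compare against.

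Your Steps~1 and~2 are sound. The Baire-category reduction to a coset $uU$ of bounded exponent is the standard opening (and is how Herfort begins). The transfinite descent to $W$ with $\overline{[W,\langle u\rangle]}=W$ is fine, and the claim that $W$ still involves infinitely many primes is justified: for $p\nmid n$, a Sylow pro-$p$ subgroup of $U$ maps trivially into each successive quotient (which has exponent dividing $n$), hence lies in every $U_\alpha$ and therefore in $W$.

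Step~3, however, is not a proof but a programme, and it has real gaps. Descending the derived series of $W$ need not terminate (a profinite group can be perfect), and even where it does, each abelian slice involving only finitely many primes gives no \emph{uniform} bound across the slices, so their union over $i$ could still be infinite. The handling of non-abelian composition factors is only a sketch: converting ``a coset of exponent at most $n$ meets every almost-simple section'' into ``only finitely many simple groups occur'' is precisely the hard point, and you have not supplied an actual argument. Note too that Herfort's paper predates both the classification of finite simple groups and Zel'manov's theorem, so his proof necessarily proceeds by more elementary means; your appeal to Theorem~\ref{Zel'manov} is not circular in the paper's logic, but it is a heavier tool than the result requires, and it does not by itself close the gap you identify.
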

	
Therefore, if $G$ is torsion, then $|G|$ has a finite number of prime divisors, say $p_1,\ldots,p_t$. Let $P_1,\ldots,P_t$ be the corresponding Sylow subgroups of $G$ (for a prime $p$, a $p$-Sylow subgroup $P$ of $G$ is a subgroup $P$ whose order is a power of $p$ (possibly $p^{\infty}$) and its index is not divisible by $p$).  If Problem \ref{p:p1} has a positive solution, then for any $i$ there exists a finitely generated subgroup $Q_i \subseteq P_i$ such that $\exp(Q_i) = \exp(P_i)$, and $Q_i$ is finite by Zel'manov's theorem. This implies that $\exp(G)$ is finite. 

Making progress on Problem \ref{p:p1} is fairly difficult, and a complete solution is still out of reach. Nevertheless, if $P$ is a finitely generated pro-$p$ group then either $P$ is torsion (and thus finite by Zel'manov's theorem), or it is infinite and non-torsion. In the former case, $\exp(P)$ is finite and there exists an element $x \in P$ such that $\exp(P)=|x|$. In the latter case, since $P$ is non-torsion, there is an element of order $p^{\infty}$, which is exactly the exponent of $P$ because $\exp(P)=\lcm\{|y| \mid y\in P\}$ by definition.
	 
Using these observations, it is possible to prove Theorem C, which gives a best possible solution to Problem \ref{p:p1} in the case where $G$ is a finitely generated \textit{prosupersolvable} group, which is an inverse limit of finite supersolvable groups. In order to prove this result, we require some additional terminology.

Let $\pi$ be a set of prime numbers and let $\pi'$ be the set of prime numbers not in $\pi$. We say that a supernatural number $\delta$ is a \emph{$\pi$-number} if the primes dividing $\delta$ are in $\pi$. A closed subgroup $H$ of a profinite group $G$ is a \emph{$\pi$-subgroup} if the order of $H$ is a $\pi$-number. If $H$ is a maximal $\pi$-subgroup of $G$, it is called a \emph{$\pi$-Sylow subgroup} of $G$, and it is called a \emph{$\pi$-Hall subgroup} if it is a $\pi$-Sylow subgroup and $|G : H|$ is a $\pi'$-number.
	 
\begin{remk}
If $G$ is prosolvable, every $\pi$-Sylow subgroup is a $\pi$-Hall subgroup, and any two of them are conjugate (see \cite[Corollary 2.3.7]{Ribes}). In particular, this property holds for prosupersolvable groups.
\end{remk}

\begin{thm}\label{split}
Let $n$ be a positive integer and let $\pi$ be the set of prime numbers greater than $n$. Let $G$ be a prosupersolvable group. 
\begin{itemize}
\item[{\rm (i)}] There is a (unique) normal $\pi$-Sylow subgroup $K$ of $G$.
\item[{\rm (ii)}] There is a split exact sequence of prosupersolvable groups 
\begin{center}
\begin{tikzpicture}
\matrix (m) [matrix of math nodes,column sep=2em,minimum width=1em]
{  1 & K & G & H & 1 \\};
 %\draw[double,double distance=5pt] (m-1-3) – (m-1-4);
 \path[->,<-]
  (m-1-1) edge[->] node [] {} (m-1-2)
  (m-1-2) edge[->] node [] {} (m-1-3) 
  %(m-1-3)  edge[<-,distance=3pt] node  [] {} (m-1-4)
  (m-1-3)  edge[->] node  [below] {$\varphi$} (m-1-4)
  (m-1-4) edge[->] node [] {} (m-1-5);
\end{tikzpicture}
\end{center}
where $\varphi$ is an open map and $H$ is a $\pi'$-Sylow subgroup of $G$. In other words, $G$ is the topological semidirect product of $K$ and $H$.
\end{itemize}
\end{thm}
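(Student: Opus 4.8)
The plan is to reduce part (i) to the finite case by an inverse-limit argument and to obtain the splitting in part (ii) from the profinite Schur--Zassenhaus theorem.

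For part (i) I would first record the relevant fact about finite supersolvable groups: such a group $G$ has a Sylow tower with respect to the decreasing ordering $p_1 > p_2 > \cdots$ of the primes dividing $|G|$, so that for every initial segment $\sigma = \{p_1,\dots,p_j\}$ the product of the corresponding Sylow subgroups is a normal Hall $\sigma$-subgroup; being a normal Hall subgroup, it is moreover unique (indeed characteristic). Since $\pi$ is the set of \emph{all} primes greater than $n$, the primes in $\pi$ dividing $|G|$ form exactly such an initial segment, so every finite supersolvable group has a unique normal Hall $\pi$-subgroup $K_G$. Now, given a prosupersolvable $G$, I would use that every continuous finite quotient $G/N$ ($N\unlhd_o G$) is supersolvable and that $G = \varprojlim_{N} G/N$; writing $K_N$ for the unique normal Hall $\pi$-subgroup of $G/N$, uniqueness makes the $K_N$ compatible with the transition maps, so they form a surjective inverse system and $K := \varprojlim_N K_N$ is a closed normal subgroup of $G$ with $KN/N = K_N$ for all $N$. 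Then $|K|$ is a $\pi$-number, while $G/K \cong \varprojlim_N (G/N)/K_N$ is pro-$\pi'$, so $|G:K|$ is a $\pi'$-number. Since any $\pi$-subgroup $L\le G$ maps onto a (necessarily trivial) $\pi$-subgroup of $G/K$, we get $L\le K$; hence $K$ is the unique maximal $\pi$-subgroup of $G$, that is, the unique (normal) $\pi$-Sylow subgroup --- indeed a $\pi$-Hall subgroup --- which is part (i).

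For part (ii), since $\pi$ and $\pi'$ are disjoint the supernatural numbers $|K|$ and $|G:K|$ are coprime, and as $G$ is prosolvable I would invoke the profinite Schur--Zassenhaus theorem (see \cite{Ribes}) to obtain a continuous section $s$ of the quotient map $\varphi\colon G \to G/K =: H$; equivalently, $s(H)$ is a closed complement, $G = K\cdot s(H)$ with $K \cap s(H) = 1$, and all such complements are conjugate. The map $\varphi$ is open because quotient maps of topological groups are open. The complement $s(H)$ has order $|H| = |G:K|$, a $\pi'$-number, and index $|K|$, a $\pi$-number; any $\pi'$-subgroup of $G$ containing it would have $\pi'$-index dividing the $\pi$-number $|K|$, so $s(H)$ is a maximal $\pi'$-subgroup, i.e.\ a $\pi'$-Sylow (indeed $\pi'$-Hall) subgroup of $G$. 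Finally $K$, a closed subgroup of a prosupersolvable group, and $H \cong G/K$, a continuous quotient of one, are themselves prosupersolvable. This yields the split exact sequence of prosupersolvable groups and the topological semidirect decomposition $G = K \rtimes H$.

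The routine ingredients are the Sylow-tower property of finite supersolvable groups, the compatibility and inverse limit of the $K_N$, and the standard facts that quotient maps of profinite groups are open and that closed subgroups and continuous quotients of prosupersolvable groups remain prosupersolvable. I expect the genuine obstacle to be the \emph{coherent} splitting in part (ii): one cannot naively form an inverse limit of complements of the $K_N$ in the $G/N$, since the sets of complements do not carry well-defined transition maps, so the existence of $s$ rests squarely on the profinite Schur--Zassenhaus theorem --- whose proof uses prosolvability of $G$ to run a compactness argument over the nonempty finite conjugacy class of complements in each finite quotient. If a self-contained treatment were wanted, I would reprove precisely that compactness step.
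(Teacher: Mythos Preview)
The paper does not prove this theorem at all: it simply cites it as \cite[Proposition 3.5]{pss} (Oltikar and Ribes). Your proposal is therefore not comparable to the paper's ``proof'', but it is a correct and standard way to establish the result from scratch. The inverse-limit construction of $K$ in part~(i) via the unique normal Hall $\pi$-subgroups of the finite supersolvable quotients is exactly how Oltikar and Ribes proceed, and your appeal to the profinite Schur--Zassenhaus theorem for the splitting in part~(ii) is also what they do (indeed, this is the only reasonable route). Your observation that one cannot na\"ively take an inverse limit of complements, and that the compactness/conjugacy argument is the real content of profinite Schur--Zassenhaus, is well taken.

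One minor point: in part~(i) you should check that the inverse system $(K_N)$ is surjective before asserting $KN/N = K_N$; this follows because the image of a normal Hall $\pi$-subgroup under a surjection is again a normal Hall $\pi$-subgroup, hence equals the unique one in the target. Everything else is routine.
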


\begin{proof}
This is \cite[Proposition 3.5]{pss}.
\end{proof}
	 
Before embarking on the proof of Theorem C, we record some additional notation and  preliminary results on prosupersolvable groups.
	 	
Recall that the \textit{Frattini subgroup} of a profinite group $G$, denoted $\Phi(G)$, is the intersection of all the maximal closed subgroups of $G$. The following result is \cite[Corollary 3.9]{pss}.
		
\begin{thm}\label{pro-p f.g.}
Let $G$ be a finitely generated prosupersolvable group. For each prime number $p$, let $P$ be a $p$-Sylow subgroup of $G$. Then $P$ is finitely generated.
\end{thm}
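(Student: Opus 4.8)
The plan is to reduce, by two applications of Theorem~\ref{split}, to the case where the $p$-Sylow subgroup in question is \emph{normal}, and then to analyse its Frattini quotient as a module over a finitely generated pro-$p'$ group. So fix a prime $p$ (if $p\nmid |G|$ there is nothing to prove) and let $P$ be a $p$-Sylow subgroup of $G$. Applying Theorem~\ref{split} with $n=p$, I would write $G=K\rtimes H$, where $K$ is the normal $\pi$-Sylow subgroup for $\pi$ the set of primes greater than $p$; then $H\cong G/K$ is a finitely generated prosupersolvable Hall subgroup all of whose prime divisors are at most $p$, and by the Hall theory for prosolvable groups (see the remark preceding Theorem~\ref{split} and \cite[Corollary 2.3.7]{Ribes}) the $p$-group $P$ lies in a conjugate of $H$, in which it is a $p$-Sylow subgroup. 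Hence it suffices to prove the result for $H$, so we may replace $G$ by $H$ and assume that every prime dividing $|G|$ is at most $p$. Applying Theorem~\ref{split} to $G$ with $n=p-1$, the only prime dividing $|G|$ that lies in the corresponding set $\pi$ is $p$ itself, so we obtain $G=P\rtimes Q$, where $P$ is the (unique) normal $p$-Sylow subgroup of $G$ and $Q$ is a Hall $p'$-subgroup with $Q\cong G/P$ finitely generated. Thus it is enough to prove: if $G=P\rtimes Q$ is prosupersolvable with $P$ a normal pro-$p$ subgroup and $Q$ a finitely generated pro-$p'$ group, then $P$ is finitely generated.

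Since $P$ is pro-$p$, it is finitely generated exactly when the Frattini quotient $V:=P/\Phi(P)$ is finite, where $\Phi(P)=\overline{[P,P]P^{p}}$ and $V$ is an $\mathbb{F}_p$-vector space. As $\Phi(P)$ is characteristic in $P\unlhd G$, the conjugation action makes $V$ a continuous $\mathbb{F}_p[Q]$-module, and $G/\Phi(P)\cong V\rtimes Q$ is prosupersolvable, being a continuous quotient of $G$. Because $Q$ is pro-$p'$, every continuous finite $\mathbb{F}_p[Q]$-module is semisimple (Maschke); and because $V\rtimes Q$ is prosupersolvable, each finite quotient of $V$ (by an open $Q$-submodule) occurs as a normal subgroup of a finite supersolvable quotient of $V\rtimes Q$, hence has a chief series with cyclic factors, hence (by semisimplicity) is a direct sum of one-dimensional $\mathbb{F}_p[Q]$-modules, i.e.\ of characters $\chi\colon Q\to\mathbb{F}_p^{\times}$. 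Since $Q$ is finitely generated, $\operatorname{Hom}(Q,\mathbb{F}_p^{\times})$ is finite, so on passing to the inverse limit we obtain a \emph{finite} decomposition $V=\bigoplus_{\chi}V_{\chi}$ into isotypic components, with $V_{\chi}\cong\chi^{\oplus n_{\chi}}$.

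It then remains to bound each multiplicity $n_{\chi}$ in terms of $d:=d(G)$. If $\chi$ is trivial, then $V_{\chi}$ is a trivial $Q$-module and quotienting $V\rtimes Q$ by the $Q$-submodule $\bigoplus_{\psi\ne\chi}V_{\psi}$ gives $V_{\chi}\times Q$ (the two factors having coprime orders), whence $n_{\chi}=d(V_{\chi})\le d(V_{\chi}\times Q)\le d$. If $\chi$ is nontrivial, then every element of $Q$ acts on $V_{\chi}$ as a scalar in $\mathbb{F}_p^{\times}$; consequently, for any subgroup $R=\overline{\langle x_1,\dots,x_k\rangle}$ of $V_{\chi}\rtimes Q$ the $V_{\chi}$-component of each element of $R$ lies in the $\mathbb{F}_p$-span of the $V_{\chi}$-components of $x_1,\dots,x_k$, so $\dim_{\mathbb{F}_p}(R\cap V_{\chi})\le k$; taking $R=V_{\chi}\rtimes Q$ (a continuous quotient of $G$) gives $n_{\chi}=\dim_{\mathbb{F}_p}V_{\chi}\le d(V_{\chi}\rtimes Q)\le d$. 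Summing over the finitely many characters, $\dim_{\mathbb{F}_p}V\le|\operatorname{Hom}(Q,\mathbb{F}_p^{\times})|\cdot d<\infty$, so $V$ is finite and $P$ is finitely generated.

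The hard part is not any single step but seeing why a direct argument in finite quotients cannot work: for a finite supersolvable group $M$ the minimal number of generators of a Sylow subgroup is \emph{not} bounded in terms of $d(M)$ (for instance, a semidirect product $C_m\ltimes\mathbb{F}_p^m$ in which $C_m$ acts via $m$ pairwise distinct characters is $2$-generated but has a $p$-Sylow subgroup of rank $m$). The point the argument exploits is that this growth is caused solely by the number of \emph{distinct} one-dimensional constituents, which becomes finite once $G$, hence $Q$, is finitely generated, whereas the multiplicity of each individual constituent is controlled by $d(G)$ precisely because $Q$ acts on it by scalars; turning these two observations into the bound above — and keeping track of the splitting supplied by Theorem~\ref{split}, so that the relevant subquotients genuinely are semidirect products — is where the care is needed.
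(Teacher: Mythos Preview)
The paper does not actually prove this statement: it simply cites \cite[Corollary~3.9]{pss} (Oltikar--Ribes). Your argument, by contrast, supplies a complete and correct self-contained proof: the two reductions via Theorem~\ref{split} are exactly right, the passage to the Frattini quotient $V=P/\Phi(P)$ as a continuous $\mathbb{F}_p[Q]$-module is standard, and the key observation --- that supersolvability forces every finite $Q$-quotient of $V$ to be a sum of one-dimensional characters, of which there are only finitely many since $Q$ is finitely generated and $\mathbb{F}_p^{\times}$ is finite --- is both correct and the natural way to see why the result holds. Your multiplicity bounds $n_\chi\le d(G)$ are also correct (the scalar-action argument for nontrivial $\chi$ is a nice touch), though for the bare statement ``$P$ is finitely generated'' one could stop as soon as $V=\bigoplus_\chi V_\chi$ with finitely many $\chi$ and each $V_\chi$ finite-dimensional, without the sharper bound. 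In short: your proof is more than the paper provides, and is sound.
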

	
We will also need also Gasch\"utz's Theorem (see \cite{Gas}), and its profinite version (\cite[Lemma 2.1]{Gash}):
	
\begin{thm}[Gasch\"utz]\label{Gas} 
Let $N$ be a normal subgroup of a finite group $G$ and let $g_1,\ldots,g_m \in G$ be such that $G=\langle g_1,\ldots,g_m,N \rangle$. If $d(G)\leq m$, then there exist elements $u_1,\ldots,u_m$ of $N$ such that $G = \langle g_1u_1,\ldots,g_mu_m \rangle$.
\end{thm}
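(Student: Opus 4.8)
The plan is to deduce Gasch\"utz's theorem from a counting statement about generating $m$-tuples, in the spirit of P.\ Hall. For a finite group $G$ let $\gamma_m(G)$ be the number of ordered tuples $(x_1,\dots,x_m)\in G^m$ with $\langle x_1,\dots,x_m\rangle=G$, and put $Q=G/N$. The hypothesis $G=\langle g_1,\dots,g_m,N \rangle$ says precisely that $(\bar g_1,\dots,\bar g_m)$ is a generating $m$-tuple of $Q$. For each generating $m$-tuple $q=(q_1,\dots,q_m)$ of $Q$ set
\[
c(q)=\#\{(x_1,\dots,x_m)\in G^m \mid \langle x_1,\dots,x_m\rangle=G,\ x_iN=q_i\text{ for all }i\}.
\]
The theorem is exactly the assertion $c(\bar g_1,\dots,\bar g_m)\ge1$, and I would prove it by showing that $c(q)$ does not depend on the generating tuple $q$. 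Granting this, summing over all generating tuples of $Q$ gives $c(q)\cdot\gamma_m(Q)=\gamma_m(G)$, so $c(q)=\gamma_m(G)/\gamma_m(Q)$; since $d(G)\le m$ gives $\gamma_m(G)\ge1$ and $d(Q)\le d(G)\le m$ gives $\gamma_m(Q)\ge1$, and $c(q)$ is a non-negative integer, we get $c(q)\ge1$, hence $c(\bar g_1,\dots,\bar g_m)\ge1$.

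To prove the independence, fix a generating tuple $q$ of $Q$ and a preimage $(g_1,\dots,g_m)\in G^m$, so that $c(q)=|N|^m-b(q)$ where $b(q)$ is the number of tuples $(u_1,\dots,u_m)\in N^m$ with $\langle g_1u_1,\dots,g_mu_m\rangle\neq G$. For such a tuple, $K=\langle g_1u_1,\dots,g_mu_m\rangle$ satisfies $KN=G$ because its image in $Q$ is $\langle q_1,\dots,q_m\rangle=Q$; hence any maximal subgroup of $G$ containing $K$ lies in the set $\mathcal{M}$ of maximal subgroups $M^{*}$ of $G$ with $M^{*}N=G$. Therefore $(u_1,\dots,u_m)$ is counted by $b(q)$ if and only if $(u_1,\dots,u_m)\in A_{M^{*}}:=\{(u_1,\dots,u_m)\in N^m\mid g_iu_i\in M^{*}\text{ for all }i\}$ for some $M^{*}\in\mathcal{M}$, and so $b(q)=|\bigcup_{M^{*}\in\mathcal{M}}A_{M^{*}}|$.

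The key observation is that every term occurring in inclusion--exclusion is independent of $q$. For any subgroup $P\le G$ and any $g\in G$, the set $\{u\in N\mid gu\in P\}$ equals $g^{-1}(gN\cap P)$, which is a coset of $N\cap P$ (so has size $|N\cap P|$) if $g\in PN$, and is empty if $g\notin PN$. Taking $P=\bigcap_{M^{*}\in\mathcal{S}}M^{*}$ for a subset $\mathcal{S}\subseteq\mathcal{M}$: if $PN=G$ then every $g_i$ lies in $PN$ and $|\bigcap_{M^{*}\in\mathcal{S}}A_{M^{*}}|=|N\cap P|^m$; if $PN\neq G$ then, since $\langle q_1,\dots,q_m\rangle=Q\not\le PN/N$, some $g_i$ lies outside $PN$ and the intersection is empty. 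In either case the size depends only on $G$, $N$ and $\mathcal{S}$ and not on $q$, so $b(q)$, and hence $c(q)$, is independent of $q$ by inclusion--exclusion, which finishes the argument.

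The step requiring the most care is this last bookkeeping: one must check that every maximal subgroup lying over a non-generating specialisation really does belong to $\mathcal{M}$, and that $q$ can enter only through the (non-)emptiness of $g_iN\cap P$, which is controlled solely by whether $PN=G$. An alternative, more structural route is to reduce by induction on $|N|$ --- choosing a minimal normal subgroup $M\le N$ and using the multiplicativity $c_{(G,N)}=c_{(G/M,N/M)}\cdot c_{(G,M)}$ --- to the case that $N$ is minimal normal, and then to treat the abelian case (where the bad tuples correspond bijectively to the complements of $N$ in $G$, a number independent of $q$) and the non-abelian case separately; in that approach the non-abelian chief-factor case is the delicate one.
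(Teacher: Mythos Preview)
Your argument is correct and is essentially the classical counting proof of Gasch\"utz's lemma (going back to Gasch\"utz and to P.~Hall's Eulerian function). The paper, however, does not supply a proof at all: it merely states the result and cites Gasch\"utz's original 1955 paper \cite{Gas}. So there is nothing to compare at the level of technique --- the paper treats this as a black-box citation, whereas you have written out the standard proof. Your inclusion--exclusion bookkeeping is sound; in particular, the reduction to the dichotomy $PN=G$ versus $PN\neq G$ correctly captures why the count $c(q)$ is independent of the generating tuple $q$, and the final deduction $c(q)=\gamma_m(G)/\gamma_m(Q)\ge 1$ is valid since $c(q)$ is a non-negative integer equal to a positive rational.
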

	
\begin{thm}\label{Gas_pro}
Let $\pi : G \rightarrow H$ be a continuous epimorphism from a finitely generated profinite group onto $H$. Assume $d(G) \leq d$ and write $H = \la z_1,\ldots,z_d \ra$.  Then there exist $y_1,\ldots,y_d \in G$ that generate $G$ and $\pi(y_i)=z_i$ for every $i=1,\ldots,d$.
\end{thm}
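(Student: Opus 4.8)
The plan is to derive this from the finite Gasch\"utz theorem (Theorem~\ref{Gas}) by a compactness argument over the finite quotients of $G$. Write $K=\ker\pi$, so $H\cong G/K$. The first point to record is that for every open normal subgroup $N\unlhd_o G$ the image $\pi(N)$ is an open normal subgroup of $H$: normality is immediate since $\pi$ is surjective, and $\pi(N)$ has finite index because $G$ is a union of finitely many cosets of $N$, so, being closed of finite index in a profinite group, it is open. Since moreover $M=\pi(\pi^{-1}(M))$ for every $M\unlhd_o H$, the family $\{\pi(N)\mid N\unlhd_o G\}$ is exactly the family of all open normal subgroups of $H$; in particular $\bigcap_{N\unlhd_o G}\pi(N)=1$.

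For each $N\unlhd_o G$ I would define
\[
\Omega_N=\{(y_1,\dots,y_d)\in G^d \mid \pi(y_i)\in z_i\pi(N)\ \text{for all }i,\ \langle y_1N,\dots,y_dN\rangle=G/N\}.
\]
Each of the two conditions is clopen in $G^d$ (the first because $\pi^{-1}(z_i\pi(N))$ is a clopen coset in $G$, the second because the generation requirement depends only on the classes $y_iN$ and hence defines a union of cosets of $N^d$), so $\Omega_N$ is closed. To see that $\Omega_N\neq\emptyset$, apply Theorem~\ref{Gas} to the epimorphism $\bar\pi\colon G/N\to H/\pi(N)$ induced by $\pi$: its domain satisfies $d(G/N)\le d(G)\le d$, the images of $z_1,\dots,z_d$ generate its codomain, and choosing arbitrary preimages $g_i\in G/N$ of those images gives $G/N=\langle g_1,\dots,g_d,\ker\bar\pi\rangle$; then Theorem~\ref{Gas} supplies $u_i\in\ker\bar\pi$ with $G/N=\langle g_1u_1,\dots,g_du_d\rangle$, while $\bar\pi(g_iu_i)$ is the image of $z_i$ in $H/\pi(N)$. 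Any tuple in $G^d$ lifting $(g_1u_1,\dots,g_du_d)$ then lies in $\Omega_N$. Finally the $\Omega_N$ have the finite intersection property, since $\Omega_{N_1\cap\cdots\cap N_k}\subseteq\Omega_{N_1}\cap\cdots\cap\Omega_{N_k}$.

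By compactness of $G^d$ there is a tuple $(y_1,\dots,y_d)\in\bigcap_{N\unlhd_o G}\Omega_N$. Because $\langle y_1N,\dots,y_dN\rangle=G/N$ for every $N\unlhd_o G$, the elements $y_1,\dots,y_d$ topologically generate $G$; and because $\pi(y_i)z_i^{-1}\in\pi(N)$ for every $N$, the identity $\bigcap_{N}\pi(N)=1$ forces $\pi(y_i)=z_i$. This produces the required generators.

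The only step needing genuine care, beyond routine profinite bookkeeping, is securing $\pi(y_i)=z_i$ \emph{exactly} rather than merely modulo some subgroup; this is precisely why the coset constraint $\pi(y_i)\in z_i\pi(N)$ is built into $\Omega_N$, and why it is used that a continuous epimorphism of profinite groups is open, so that the subgroups $\pi(N)$ shrink down to $1$ in $H$. Everything else is the standard passage from the finite Gasch\"utz theorem to its profinite version via an inverse limit.
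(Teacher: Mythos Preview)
Your argument is correct and is precisely the standard way to pass from the finite Gasch\"utz theorem to its profinite analogue: apply Theorem~\ref{Gas} in each finite quotient $G/N$, package the solutions as closed non-empty subsets $\Omega_N\subseteq G^d$, and extract a common element by compactness. The only delicate point, which you handle properly, is ensuring $\pi(y_i)=z_i$ exactly; for this you correctly use that $\pi(N)$ is compact (hence closed) of finite index, so open, and that the $\pi(N)$ range over all open normal subgroups of $H$, whence their intersection is trivial.

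As for comparison with the paper: the paper does not prove Theorem~\ref{Gas_pro} at all; it simply quotes it from \cite[Lemma~2.1]{Gash} (Lubotzky). So there is no ``paper's own proof'' to compare against. Your write-up is a self-contained and correct justification of the cited result, and the compactness approach you take is essentially the one underlying the referenced lemma.
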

	
In the proof of Theorem C, one of the key ideas is to inductively construct a chain of closed subsets of $G \times G$. In order to do this, the following two results will play a fundamental role.
	
\begin{prop}\label{2-gen}
Let $G = \langle g \rangle ^H\rtimes H$ be a finite solvable group, where $H$ is a $2$-generated subgroup, $g$ is a $p$-element where $p$ is a prime, $p\nmid |H|$ and $\langle g \rangle ^H$ is a $p$-group. Then $G$ is $2$-generated.
\end{prop}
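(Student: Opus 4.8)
The plan is to exploit the coprimality between $g$ and $H$ together with the fact that $V := \la g\ra^H$ is a normal $p$-subgroup on which $H$ acts. First I would pass to $V/\Phi(V)$: since $\Phi(V)$ is characteristic in $V$ and $V$ is normal in $G$, we have $\Phi(V) \unlhd G$, so $\Phi(G) \ge \Phi(V)$ is not automatic but at least $G/\Phi(V)$ has the same structure $\ol{V}\rtimes H$ with $\ol{V} = V/\Phi(V)$ now an elementary abelian $p$-group (an $\F_p[H]$-module, or rather $\z/p$-module for the group $H$). By Gasch\"utz's Theorem (Theorem \ref{Gas}), it suffices to produce generators modulo $\Phi(V)$, hence modulo $\Phi(G)$; so I may assume $V$ is elementary abelian and is a semisimple $\F_p H$-module since $p \nmid |H|$ (Maschke). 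Thus $V = V_1 \oplus \cdots \oplus V_m$ with each $V_i$ irreducible, and moreover $V$ is generated as an $H$-module by the image of $g$ — equivalently, writing $g = v_1 + \cdots + v_m$ with $v_i \in V_i$, each $v_i$ is nonzero (an irreducible module is generated by any nonzero element, and a module decomposed into irreducibles is cyclic iff its projection to each isotypic component is "diagonal enough"; here I must be a little careful if some $V_i$ are isomorphic).

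Next, the key step: choose generators of the form $h_1, h_2 g^t$ where $H = \la h_1, h_2\ra$ and $t$ is to be chosen. Let $W = \la h_1, h_2 g^t\ra$ and $W_0 = W \cap V$. Then $W$ projects onto $H$ (since $h_1$ and $h_2 g^t$ map to $h_1$ and $h_2$), so $W = W_0 \rtimes (\text{a complement}) $ and $W_0 \unlhd W$; in particular $W_0$ is an $H$-submodule of $V$ — conjugating $g^t$ by elements of the complement and multiplying shows $g^t - (g^t)^{h} \in W_0$ for all $h \in H$. Since $p \nmid t$ (I take $t=1$, or any unit mod $p$), $g \cdot \langle\text{}\rangle$... more precisely, $V$ is generated as an $H$-module by $g$, hence by $g^t$, and the submodule generated by $g^t$ is spanned by $\{(g^t)^h : h\in H\}$; I claim this submodule is contained in $W_0$. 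Indeed, an easy computation inside $W$ (using that the complement normalizes nothing extra) shows $(g^t)^h (g^{-t}) \in W_0$ for all $h$, and combining these with $g^t \in W_0$ — wait, $g^t$ itself need not lie in $W$.

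So the honest argument must be: $W_0 = W\cap V$ is normalized by $W$, which surjects onto $H$, so $W_0$ is an $H$-invariant subspace of $V$. We have $h_2 g^t \in W$; for $h \in H$ pick $w_h \in W$ with $w_h \equiv h \imod V$, then $(h_2 g^t)^{w_h} \in W$ and a direct check modulo $V$ shows $(h_2 g^t)^{w_h} \equiv h_2^{w_h} \cdot (\text{element of } V)$, and quotienting out a genuine element of $W$ of the form $h_2^{w_h}\cdot(\ast)$ isolates a conjugate of $g^t$; iterating and using $\la (g^t)^h : h\in H\ra_{\F_p H} = V$ forces $W_0 = V$, whence $W \supseteq V$ and $W = G$. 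I would write this last manipulation out carefully using that $W/W_0 \cong H$ is a complement and that the coset $W_0 h_2 g^t W_0$ generates a full $H$-orbit's worth of $V$; the cleanest route is to observe $[h_2 g^t, h_1] \in W \cap V = W_0$ and more generally all commutators $[h_2 g^t, w]$ for $w\in W$ lie in $W_0$, and these commutators, expanded, yield $g^t (g^t)^{-h} $-type elements spanning $V$ because $g$ generates $V$ as $H$-module and $[V,H]=V$ (true since $V$ is a sum of nontrivial irreducibles — here I use $p\nmid|H|$ again, or if $V$ had trivial summands $g$ could not generate a module on which those appear nontrivially; trivial summands are handled separately since there $g^t$ itself must be pushed in, which works because no element of $V$ is "hidden" once we know $W$ surjects onto $H$ and we can track $g^t$ modulo $W_0$). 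The main obstacle is exactly this bookkeeping: showing $W\cap V$ contains the full $H$-submodule generated by $g$, i.e. controlling $W\cap V$ from the two chosen generators; once that is done, $W\cap V = V$ gives $W=G$ and lifting back through $\Phi(G)$ via Gasch\"utz completes the proof. I expect the argument to hinge on the identity $V = [V,H] \oplus C_V(H)$ (coprime action) reducing to the two cases "no trivial part" (use commutators) and "trivial part" (use $g^t$ directly modulo $W_0$), each of which is then routine.
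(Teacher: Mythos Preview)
Your reduction to the case where $V=\la g\ra^H$ is elementary abelian is correct and matches the paper. The gap is in the ``direct generation'' step: the specific pair $h_1,\,h_2g^t$ simply does not always generate $G$, so the commutator bookkeeping you outline cannot be made to work without a further idea.

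Here is a small counterexample. Take $p=7$, $H=\la h_1\ra\times\la h_2\ra\cong C_5\times C_3$, and let $V=\mathbb{F}_7$ with $h_1$ acting trivially and $h_2$ acting by multiplication by $2$ (a primitive cube root of unity in $\mathbb{F}_7$). Then $V=\la g\ra^H$ for any $g\neq 0$, $V=[V,H]$ (no trivial summand), and $G=V\rtimes H$ has order $105$. Now set $W=\la (0,h_1),\,(g,h_2)\ra$. These two elements commute: the commutator is $\bigl(h_2^{-1}(h_1^{-1}-1)g,\,[h_2,h_1]\bigr)=(0,1)$ since $h_1$ fixes $g$ and $H$ is abelian. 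Moreover $(g,h_2)^3=\bigl((1+2+4)g,\,1\bigr)=(0,1)$, so $|W|=15$ and $W\neq G$. No choice of the exponent $t$ helps, since $(g^t,h_2)$ still has order $3$. Thus the claim ``commutators $[h_2g,w]$ produce $(h-1)g$--type elements spanning $[V,H]$'' fails: in this example every such commutator lying in $V$ is zero, and $W\cap V=0$.

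The paper avoids this by not trying to name generators at all. After the same Frattini reduction it argues by contradiction: if $d(G/\Phi(V))>2$, then by Theorem~\ref{G&L} some quotient is a crown-based power $L_k$ with abelian socle $A$; the cyclicity of $V$ as an $\mathbb{F}_p[H]$-module forces $A^k$ to be cyclic, hence $k\le r$ where $|A|=q^r$ with $q=|\End_{L/A}(A)|$; and since $(|A|,|L/A|)=1$ gives $H^1(L/A,A)=0$, Proposition~\ref{Maind(L_k)}(ii) yields $d(L_k)\le 2$, a contradiction. If you want to rescue a direct approach, you will at minimum have to let the choice of generating pair $(h_1,h_2)$ for $H$ depend on the module $V$ (in the counterexample, $\la gh_1,\,h_2\ra=G$ while $\la h_1,\,gh_2\ra\neq G$), and handling higher multiplicities of irreducibles in $V$ will require more than this.
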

	
\begin{proof}
Consider $F:= \Phi(\langle g \rangle ^H)$. Since $F\subseteq \langle g \rangle^H$ we have 
\[X=G/F \cong M\rtimes H\]
where $M$ is an elementary abelian $p$-group, which can be viewed as an $\mathbb{F}_p[H]$-module. Since $p\nmid |H|$, Maschke's Theorem implies that $M$ is a completely reducible 
$\mathbb{F}_p[H]$-module, so we can write 
\[M=\prod_i N_i^{k_i}\]
where, for any $i$, $N_i$ is an irreducible $\mathbb{F}_p[H]$-module and $N_i\ncong N_j$ are non-isomorphic $\mathbb{F}_p[H]$-modules, for all $i\neq j$.

\vspace{2mm}
		
\noindent \textbf{Claim.}  $X$ is 2-generated. 

%\vspace{2mm} 
 
\begin{proof}[Proof of claim]
Let $N\unlhd X$ such that $d(X/N) = d = d(X)$ and any proper quotient of $X/N$ can be generated with $d-1$ elements. By Theorem \ref{G&L},  $X/N \cong L_k$ for some integer $k$ and some monolithic group $L$ whose socle $A=\Soc(L)$ is complemented and abelian, since $G$ is solvable. 
		
Seeking a contradiction, suppose that $d(X) > 2$. Then $d(L_k) \geq 3$ and $k\geq 2$. Indeed,  if $k=1$ then Proposition \ref{Maind(L_k)}(i) implies that $d = d(L) = \max \{ 2, d(L/A) \} = d(L/A)$, but this contradicts the fact that every proper quotient of $X/N\cong L_k$ can be generated with $d-1$ elements. 
		
If $A$ is not a $p$-group, then $L_k$ is an epimorphic image of $H$, which is $2$-generated, but this contradicts the fact that $L_k$ needs at least $3$ generators. Therefore, $A$ is a $p$-group. In particular, the minimality of $A$ implies that it is an irreducible $\mathbb{F}_p[H]$-module, and $A^k$ is isomorphic (as an $\mathbb{F}_p[H]$-module) to an epimorphic image of $M$. Since $M$ is a cyclic $\mathbb{F}_p[H]$-module, it follows that $A^k$ is also cyclic.
		 
Write $|{\rm End}_{L/A}(A)|=q$ and $|A|=q^r$, with $q$ a $p$-power. Let $J$ be the Jacobson radical of $\mathbb{F}_p[H]$, so $\mathbb{F}_p[H] / J$ is a semisimple algebra. By general properties of the Jacobson radical, $A^k$ is also a cyclic $\mathbb{F}_p[H] / J$-module.  If $A$ occurs $n$ times in $\mathbb{F}_p[H] / J$, then by applying \cite[Lemma 1]{cos}, with 
$\Lambda=\mathbb{F}_p[H] / J $ and $W=A^k$, it follows that $\lceil k / n \rceil=1$ and thus $k \leq n$. Since $\mathbb{F}_p[H] / J$ is a semisimple algebra and also Artinian (since it is finite), we can use the Wedderburn-Artin theorem (see Lemma 1.11, and Theorems 1.14 and 3.3 in \cite{Hung}) to deduce that
\[n = \dim_{\End_H(A)}(A) = \dim_{\End_{L/A}(A)}(A) = r.\]
Therefore, $k \leq r$.
		
Since $X$ is solvable and $(|L/A| , |A| ) = 1$, the Schur-Zassenhaus theorem implies that all complements of $A$ in $L$ are conjugate, so $H^1(L/A,A)=\{0\}$  (see \cite{Coho}), which implies that $|H^1(L/A,A)|=q^s=1$ and thus $s=0$. Moreover, $L/A \cong L_k/A^k$ is a proper quotient of $L_k\cong X/N$, hence $d(L/A)\leq d-1$.
		
Finally, by Proposition \ref{Maind(L_k)}(ii) we get
\[d = \textrm{d}(X/N) = d(L_k) = \max\{d-1 ,\theta + \lceil k/r \rceil\} \leq 2\]
hence $X$ is $2$-generated. This justifies the claim.
\end{proof}

\vspace{2mm}
		
In conclusion, since $X$ is the quotient of $G$ by the Frattini subgroup of $\langle g \rangle ^H$, it follows that $d(G)=d(X)$. Therefore, $G$ is also $2$-generated, and this completes the proof of the proposition.
\end{proof}
		
We can extend the previous proposition to profinite groups.
		 
\begin{thm}\label{2-gen_pro}
Let $G = \langle g \rangle ^H\rtimes H$ be a prosolvable group, where $H$ is a $2$-generated subgroup, $g$ is a $p$-element with $p$ a prime such that $p\nmid |H|$, and 
$\langle g \rangle ^H$ is a pro-$p$ group. Then $G$ is $2$-generated.
\end{thm}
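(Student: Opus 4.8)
The plan is to reduce the profinite statement to the finite case already established in Proposition \ref{2-gen} by passing to finite quotients and then using a compactness argument. First I would set $K = \langle g \rangle^H$, a normal pro-$p$ subgroup of $G$, and note that $G/K \cong H$ is $2$-generated by hypothesis. I want to show $d(G) \le 2$. Since $G$ is a profinite group, it suffices to show that $d(G/U) \le 2$ for every open normal subgroup $U$ of $G$, because $d(G) = \sup_U d(G/U)$ for a profinite group (and the supremum is attained here — one can lift generators compatibly using Gasch\"utz's Theorem \ref{Gas_pro}, or simply conclude $G$ is topologically $2$-generated by an inverse-limit/compactness argument on the closed set of generating pairs modulo the $U$'s).

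So fix $U \unlhd_o G$ and consider $\bar G = G/U$, which is a finite solvable group. Then $\bar K = KU/U$ is the image of $K$, a finite $p$-group, and it is normal in $\bar G$; moreover $\bar G / \bar K \cong H/(H\cap KU)$ is a quotient of $H$, hence $2$-generated. The subtlety is that $\bar G$ need not literally be of the form $\langle \bar g\rangle^{\bar H} \rtimes \bar H$ with the coprimality hypothesis $p \nmid |\bar H|$ holding on the nose, so I cannot apply Proposition \ref{2-gen} to $\bar G$ verbatim. Instead I would argue as follows: since $p \nmid |H|$ and $\langle g\rangle^H$ is pro-$p$, the group $K$ is precisely the (unique, normal) pro-$p$ Sylow subgroup of $G$, and $H$ is a $p'$-Sylow complement; in the finite quotient $\bar G$, the image $\bar K$ is a normal $p$-subgroup and, because $p \nmid |H|$, a Hall $p'$-subgroup of $\bar G$ is isomorphic to a quotient of $H$. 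By the Schur–Zassenhaus theorem $\bar G = \bar K \rtimes \bar L$ where $\bar L \cong \bar G/\bar K$ is a $2$-generated $p'$-group, and $\bar K$ is generated as a normal subgroup by the image $\bar g$ of $g$ (indeed $\bar K = KU/U$ is the image of $\langle g\rangle^H$, so $\bar K = \langle \bar g\rangle^{\bar G} = \langle \bar g\rangle^{\bar L}$). Thus $\bar G = \langle \bar g\rangle^{\bar L} \rtimes \bar L$ satisfies exactly the hypotheses of Proposition \ref{2-gen} (finite solvable, $\bar L$ $2$-generated, $\bar g$ a $p$-element, $p \nmid |\bar L|$, $\langle \bar g\rangle^{\bar L}$ a $p$-group), so $d(\bar G) \le 2$.

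Having shown $d(G/U) \le 2$ for all $U \unlhd_o G$, I conclude $G$ is topologically $2$-generated. For the clean version: let $\Omega_U = \{(x,y) \in G\times G \mid \langle x,y\rangle U/U = G/U\}$; each $\Omega_U$ is a nonempty closed subset of $G\times G$ (nonempty by the previous paragraph, closed because membership depends only on the images in the finite group $G/U$), and the family is closed under finite intersection since $\Omega_{U_1}\cap\Omega_{U_2} \supseteq \Omega_{U_1\cap U_2}$ and $d(G/(U_1\cap U_2)) \le 2$ as well. By compactness of $G\times G$ the total intersection is nonempty, and any pair $(a,b)$ in it satisfies $\overline{\langle a,b\rangle} = G$, i.e. $G$ is $2$-generated.

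The main obstacle I anticipate is the bookkeeping in the second paragraph: verifying that the finite quotient $\bar G = G/U$ genuinely falls under Proposition \ref{2-gen}, i.e. that $\bar K$ really is a normal $p$-subgroup, that a complement of the right shape exists (Schur–Zassenhaus, using $p \nmid |H|$ hence $p \nmid |\bar G/\bar K|$), that this complement is $2$-generated as a quotient of $H$, and that $\bar K$ is still the normal closure of a single $p$-element $\bar g$. None of these steps is deep, but one must be careful that the coprimality and normal-closure conditions survive passage to the quotient; once that is in place, everything reduces to the finite result and a routine compactness argument.
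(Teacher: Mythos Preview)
Your proposal is correct and follows the same overall strategy as the paper: show that every finite continuous image of $G$ is $2$-generated by verifying that it satisfies the hypotheses of Proposition~\ref{2-gen}, and then conclude for $G$ itself (the paper simply cites \cite[Proposition~4.2.1]{Wilson} for this last step, which is your compactness argument). The organization differs slightly. Given an open normal $N$, the paper first passes to the intermediate quotient $G^\ast = G/(N\cap P)$ (where $P=\langle g\rangle^H$), so that $G^\ast = P^\ast \rtimes H$ with $P^\ast$ finite but $H$ still intact; it then uses profinite Schur--Zassenhaus to show that the image $N^\ast$ of $N$ is a normal $p'$-subgroup contained in $H$ and centralising $P^\ast$, whence $G/N \cong P^\ast \rtimes (H/N^\ast)$ lands in exactly the form required by Proposition~\ref{2-gen}.

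Your more direct route works too, but note that your detour through Schur--Zassenhaus to produce $\bar L$ is unnecessary: the image $\overline{H}=HU/U$ is \emph{already} a complement to $\bar K$ in $\bar G$, since $|\overline{H}|$ is coprime to $p$, $|\bar K|$ is a $p$-power, and $\overline{H}\,\bar K = \bar G$, forcing $\overline{H}\cap\bar K = 1$. Taking $\bar L = \overline{H}$, the equality $\bar K = \langle \bar g\rangle^{\overline{H}}$ is then immediate as the image of $K = \langle g\rangle^H$. Your stated equality $\langle \bar g\rangle^{\bar G} = \langle \bar g\rangle^{\bar L}$ for an \emph{arbitrary} complement $\bar L$ is also true, but it is not as automatic as your parenthetical suggests; one quick justification is that both sides have the same image in $\bar K/\Phi(\bar K)$ (since $\bar K$ acts trivially there), and then the non-generator property of $\Phi(\bar K)$ gives $\langle \bar g\rangle^{\bar L} = \bar K$.
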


\begin{proof}
By \cite[Proposition 4.2.1]{Wilson}, a profinite group $X$ is $2$-generated if and only if every finite continuous image of $X$ is $2$-generated. In particular, $X$ is $2$-generated if and only if $X/N$ is $2$-generated for any open normal subgroup $N$ of $X$. 
		
Let $N$ be an open normal subgroup of $G$ and set $P:=\langle g \rangle ^H$. Consider 
$N\cap P$. Since $N\cap P$ is normal in $G$, we can consider the quotient $G^\ast =G/(N\cap P)=P^\ast \rtimes H$, where $P^\ast\cong P/(N\cap P)$ is finite. Let $N^\ast$ be the image of $N$ in $G^\ast$. Note that $N^\ast \cap P^\ast=1$, hence $p\nmid |N^\ast|$ and $N^\ast$ is a $p'$-subgroup of $G^\ast$. Since $H$ is a $p'$-Hall subgroup of $G^\ast$, the profinite version of the Schur-Zassenhaus theorem (see Propositions 2.3.2 and 2.3.3 in \cite{Wilson}) implies that  there exists an element $x\in G^\ast$ such that $x^{-1} N^\ast x \leq H$, hence $N^\ast \leq H$ since $N^\ast$ is normal in $G^\ast$. Moreover, $[N^\ast , P^\ast]\subseteq N^\ast \cap P^\ast = 1$, hence $N^\ast \subseteq C_H(P^\ast)$, where $C_H(P^\ast)$ is the centralizer of $P^\ast$ in $H$. This implies that $\langle g \rangle ^H=\langle g \rangle ^{H/N^\ast}$, hence $G^\ast / N^\ast = P^\ast \rtimes H/N^\ast$ is a finite solvable group since $H/N^\ast$ is finite. 
		
By applying Proposition \ref{2-gen}, we deduce that $G^\ast/N^\ast$ is $2$-generated hence, by our initial observation on finite continuous images, we deduce that $G$ is $2$-generated.
\end{proof}
		
We are now ready to prove Theorem C. 

\begin{proof}[Proof of Theorem C]
The main idea is to construct a chain of closed subsets of $G \times G$ such that the intersection of these subsets contains a pair of elements generating a subgroup $C$ with the required property.
		
To get started, let us construct a countable chain $G = M_0 \ge M_1 \ge M_2 \ge \cdots$ of normal subgroups of $G$, with the property that $\bigcap_i M_i = 1$ and the quotient $M_i / M_{i+1}$ is a Sylow subgroup of $G/M_{i+1}$ for each $i$. 

Let $p_0$ be the smallest prime which divides the order of $G$, and let $\pi_1$ be the set of prime numbers greater than $p_0$. By Theorem \ref{split}, there exists a normal $\pi_1$-Sylow subgroup $M_1$ of $G$ such that $K_0 = G / M_1$ is a $p_0$-Sylow group, and $G$ is a semidirect product $G = M_1 \rtimes K_0$. Now we turn to $M_1$. Let $p_1$ be the smallest prime which divides the order of $M_1$, and let $\pi_2$ be the set of primes greater than $p_1$. Again, using Theorem \ref{split}, there exists a normal $\pi_2$-Sylow subgroup $M_2$ of $M_1$ such that $K_1 = M_1 / M_2$ is a $p_1$-Sylow group, and $M_1$ is a semidirect product $M_1 = M_2 \rtimes K_1$. Now we consider $M_2$ and repeat the procedure, obtaining $M_3$ and so on. In this way, we obtain the following chain
\[G = M_0 \ge M_1 \ge M_2 \ge \cdots \ge M_i \ge M_{i+1} \ge \cdots\]
where for any $i$, $M_i/M_{i+1}$ is isomorphic to a $p_i$-Sylow subgroup of $G$, and neither $|G/M_i|$ nor $|M_{i+1}|$ is divisible by $p_i$.
		
Next we construct a chain of closed subsets $\Omega_i$ of $G\times G$ with the following properties:
\begin{enumerate}
\item[1.] $\Omega_i = x_iM_i \times y_iM_i$;
\item[2.] $x_i \equiv x_{i-1}\textrm{ mod }M_{i-1}$ and $y_i \equiv y_{i-1}\textrm{ mod }M_{i-1}$;
\item[3.] $\exp(G/M_i) = \exp( \langle x_i,y_i \rangle M_i/M_i)$.
\end{enumerate}

Note that the second property implies that $\Omega_{i} \subset \Omega_{i-1}$ for each $i$. This chain can be constructed via induction on the index $i$. 

To see this, suppose that $H_i = \langle x_i,y_i \rangle M_i/M_i$ is a $2$-generated closed subgroup with the same exponent as $G/M_i$. The factor group $M_i / M_{i+1}$ is isomorphic to $P$, a $p_i$-Sylow subgroup of $G$, and Theorem \ref{pro-p f.g.} implies that $P$ is finitely generated. As we have previously observed, Zel'manov's theorem (Theorem \ref{Zel'manov}) implies that $P$ contains an element $g$ whose order is equal to the exponent of $P$.
		 
Consider $H_{i+1} = \langle g \rangle ^{H_i} \rtimes H_i$. By Propositions \ref{2-gen} and \ref{2-gen_pro}, $H_{i+1}$ is a $2$-generated closed subgroup of $G/M_{i+1}$ and we can write 
$H_{i+1} = \langle x_{i+1},y_{i+1} \rangle M_{i+1}/M_{i+1}$ by Gasch\"utz's theorem, where $x_{i+1}=x_ia_i$ and $y_{i+1}=y_ib_{i}$ with $a_i,b_i\in M_i$. Moreover, $\exp(H_{i+1}) = \exp(
G/M_{i+1})$. We can now set  
\[\Omega_{i+1} = x_{i+1}M_{i+1} \times y_{i+1}M_{i+1},\]
which is closed in $G \times G$ since $M_{i+1}$ is closed in $G$. Note that $x_{i+1}M_{i+1} = x_ia_iM_{i+1} \subset x_iM_i$, and similarly $y_{i+1}M_{i+1} \subset y_iM_i$. Hence, we get the following chain of closed subsets of $G \times G$:
%\begin{equation}\label{eq:o}
\[\Omega_1 \supset \Omega_2 \supset \cdots \supset \Omega_i \supset \Omega_{i+1} \supset \cdots\]
%\end{equation}
Note that every finite subchain has non-empty intersection, so the compactness of $G$ implies that $\bigcap_{i \ge 0}\Omega _i$ is non-empty.
	 
Let $(c_1,c_2)\in\bigcap_i\Omega_i$ and consider $C=\overline{ \langle c_1,c_2 \rangle }$. We will show that this is the $2$-generated closed subgroup of $G$ that we are looking for.
	 
Trivially $\exp(C) \leq \exp(G)$. Conversely, let $p^{n(p)}$ be the largest power of $p$ dividing $\exp(G)$, where $n(p)\leq \infty$. As previously noted, a $p$-Sylow subgroup $P$ of $G$ contains an element  whose profinite order is equal to the exponent of $P$. Moreover, there exists $i$ such that $P \cong M_{i-1}/M_i$ and $\exp(P)=\exp(M_{i-1}/M_i)$. Since $(c_1,c_2) \in \Omega_i = x_iM_i \times y_iM_i$, it follows that 
\[\exp(G/M_i)  = \exp(\langle x_i, y_i \rangle M_i /M_i) = \exp(\langle c_1, c_2 \rangle M_i /M_i) 
 = \exp(CM_i / M_i) = \exp(C / C \cap M_i)\]
divides $\exp(C)$. Since $\exp(M_{i-1}/M_i)$ divides $\exp(G/M_i) = \exp(\langle x_i,y_i \rangle M_i/M_i)$, we can conclude that $\exp(P) = p^{n(p)}$ divides $\exp(C)$. As this holds for every prime $p$, we deduce that $\exp(C) = \exp(G)$.
	
This completes the proof of Theorem C.
\end{proof}

\vspace{2mm}

\noindent \textbf{Acknowledgments.} The author would like to thank her Master's degree supervisor, Professor Andrea Lucchini, who helped her to think up the results presented above, and her Ph.D. supervisor, Dr. Tim Burness, for his encouragement and precious comments on  earlier drafts of the present paper.

%%%BIBLIOGRAPHY%%% 

\end{document}